\documentclass[reqno,a4paper, 11pt]{amsart}

\usepackage[a4paper=true,pdfpagelabels]{hyperref}
\usepackage{graphicx}

\usepackage[ansinew]{inputenc}
\usepackage{amsfonts,epsfig}
\usepackage{latexsym}
\usepackage{amsmath}
\usepackage{amssymb}

\newtheorem{theorem}{Theorem}
\newtheorem{lemma}[theorem]{Lemma}
\newtheorem{corollary}[theorem]{Corollary}
\newtheorem{proposition}[theorem]{Proposition}
\newtheorem{question}[theorem]{Question}
\newtheorem{lettertheorem}{Theorem}
\newtheorem{letterlemma}[lettertheorem]{Lemma}

\theoremstyle{definition}

\theoremstyle{remark}
\newtheorem{remark}[theorem]{Remark}

\numberwithin{equation}{section}

\setlength\arraycolsep{2pt}

\newcommand{\D}{\mathbb{D}}
\newcommand{\DD}{\widehat{\mathcal{D}}}
\newcommand{\Dom}{D_{\omega}}
\newcommand{\N}{\mathbb{N}}
\newcommand{\R}{\mathbb{R}}

\newcommand{\C}{\mathbb{C}}

\newcommand{\e}{\varepsilon}
\newcommand{\ep}{\varepsilon}

\renewcommand{\phi}{\varphi}

\newcommand{\T}{\mathbb{T}}

\newcommand{\wtT}{\widetilde{T}}
\newcommand{\wtz}{\widetilde{z}}

\newcommand{\op}{\mathrm{o}}
\def\wo{\widehat{\om}}

\def\a{\alpha}       \def\b{\beta}        \def\g{\gamma}
\def\d{\delta}       \def\De{{\Delta}}    \def\e{\varepsilon}
\def\la{\lambda}     \def\om{\omega}      
       \def\t{\theta}       
                  \def\z{\zeta}
\def\F{\Phi}                  \def\vp{\varphi}

\def\R{{\mathcal R}}
\def\I{{\mathcal I}}
\def\Inv{{\mathcal Inv}}

\DeclareMathOperator{\supp}{supp}
\renewcommand{\H}{\mathcal{H}}
\newcommand{\SSS}{\mathcal{S}}

\textwidth=16.5cm
\addtolength{\hoffset}{-1.9cm}
\textheight=24cm
\addtolength{\voffset}{-1cm}

%\addtolength{\hoffset}{-1cm}
%\addtolength{\textwidth}{2cm}

\begin{document}

% \title[short text for running head]{full title}
\title[Trace class criteria]{{Trace class criteria for Toeplitz and composition operators on small Bergman spaces}}

\keywords{Trace class, Toeplitz operator, Composition operator, Bergman space, essential norm, angular derivative}

\author{Jos\'e \'Angel Pel\'aez}
\address{Departamento de An\'alisis Matem\'atico, Universidad de M\'alaga, Campus de
Teatinos, 29071 M\'alaga, Spain} \email{japelaez@uma.es}

\author{Jouni R\"atty\"a}
\address{University of Eastern Finland, P.O.Box 111, 80101 Joensuu, Finland}
\email{jouni.rattya@uef.fi}

\thanks{This research was supported in part by the Ram\'on y Cajal program
of MICINN (Spain); by Ministerio de Edu\-ca\-ci\'on y Ciencia, Spain, projects
MTM2011-25502 and MTM2011-26538;  by   La Junta de Andaluc{\'i}a, (FQM210) and
(P09-FQM-4468);  by Academy of Finland project no. 268009,  by V\"ais\"al\"a Foundation of Finnish Academy of Science and Letters, and by Faculty of Science and Forestry of University of Eastern Finland project no. 930349.
}

\date{\today}

%%%%%%%%%%%%%%%%%%%%%%%%%%%%%
%%%% ----  ABSTRACT ---- %%%%
%%%%%%%%%%%%%%%%%%%%%%%%%%%%%

\begin{abstract}
We characterize the Schatten class Toeplitz operators induced by a positive Borel measure on the unit disc and the reproducing kernel of the Bergman space $A^2_\om$, where $\om$ is a radial weight satisfying the doubling property $\int_r^1\om(s)\,ds\le C\int_{\frac{1+r}{2}}^1\om(s)\,ds$. By using this, we describe the Schatten class composition operators. We also discuss basic properties of composition operators acting from $A^p_\om$ to $A^q_v$.
%bounded and compact composition operators $C_\vp:A^p_\om\to A^q_v$, the essential norm of a bounded operator and the relation between the compactness %and the existence of angular derivative of $\vp$ on the boundary.
\end{abstract}

\maketitle

%%%%%%%%%%%%%%%%%%%%%%%%%%%%%%%%%%%%%%%
%%%% ----  Text of article ----    %%%%
%%%%%%%%%%%%%%%%%%%%%%%%%%%%%%%%%%%%%%%

%%%%%%%%%%%%%%%%%%%%%%%%%%%%%%%%
%%%% ---- INTRODUCTION ---- %%%%
%%%%%%%%%%%%%%%%%%%%%%%%%%%%%%%%

\section{Introduction and main results}

Let $\H(\D)$ denote the space of all analytic functions in the unit disc $\D=\{z:|z|<1\}$. If
$0<p<\infty$ and $\omega$ is a weight, i.e. a nonnegative integrable function on $\D$, the weighted Bergman
space $A^p_\omega$ consists of $f\in\H(\D)$ such that
    $$
    \|f\|_{A^p_\omega}^p=\int_\D|f(z)|^p\omega(z)\,dA(z)<\infty,
    $$
where $dA(z)=\frac{dx\,dy}{\pi}$ is the normalized
Lebesgue area measure on $\D$. As usual, $A^p_\alpha$ denotes the weighted Bergman space induced by
the standard radial weight $(1-|z|^2)^\alpha$.

In this study we consider Bergman spaces $A^p_\om$ induced by weights in the class~$\DD$ of the radial~$\om$ for which $\widehat{\om}(r)=\int_r^1\om(s)\,ds$ satisfies $\widehat{\om}(r)\le C\widehat{\om}(\frac{1+r}{2})$. The standard radial weights admit this doubling property, but $\DD$ contains also weights such that $\widehat{\om}(r)$ decreases to zero much slower than any positive power of $1-r$. It is known that for such $\om$, many finer function theoretic properties of $A^p_\om$ are very different from those of $A^p_\a$ and, in particular, its harmonic analysis is similar to that of Hardy spaces in many aspects~\cite{PelRat,PelRatMathAnn,PelRatproj}.

For any $\om\in\DD$, the norm convergence in $A^2_\om$ implies the uniform convergence on compact subsets, and hence the point
evaluations $L_z$ are bounded linear functionals on $A^2_\om$. Therefore there exist reproducing kernels
$B^\om_{z}\in A^2_\om$, with $\|L_z\|=\|B^\om_{z}\|_{A^2_\om}$, such that
    \begin{displaymath}
    L_z(f)=f(z)=\langle f, B^\om_{z}\rangle_{A^2_\om} =\int_{\D} f(\z)\,\overline{B^\om_{z}(\z)}\,\om(\z)\,dA(\z), \quad f\in A^2_\om.
    \end{displaymath}
These kernels give rise to the
Toeplitz operator
    \begin{equation*}\label{intoper}
    \mathcal{T}_\mu(f)(z)=\int_{\D}f(\z)\,\overline{B^\om_{z}(\z)}\,d\mu(\z),
    \end{equation*}
where~$\mu$ is a positive Borel measure on $\D$, which is the primary object in this study. The operator~$\mathcal{T}_\mu$, associated with the kernel of a standard weighted Bergman space~$A^2_\a$ and a measure $d\mu=\phi dA$, has been extensively studied since the seventies \cite{CobInd73, McSuInd79, ZhuTams87}.
Luecking~\cite{Lu87} was probably one of the first authors to consider~$\mathcal{T}_\mu$ with measures as symbols. He characterized those $\mu$ for which $\mathcal{T}_\mu$ belongs to the Schatten-Von Neumann ideal $\mathcal{S}_p(A^2_\alpha)$.
%, where $0<p<\infty$ and $A^2_\alpha$ is an standard Bergman space $A^2_\alpha$.
His approach works also on the Hardy space~$H^2$ and, for a certain range of $p$, on the classical weighted Dirichlet spaces~\cite{Lu87,PauPelJDA13}. Since Toeplitz operators go hand in hand with several other operators, Luecking's method has turned out to be useful in subsequent research on concrete operator theory~\cite{Lu87,Zhu}.

Our main result describes the positive Borel measures $\mu$ such that
$\mathcal{T}_\mu$ belongs to $\SSS^p(A^2_\om)$ when $0<p<\infty$ and $\om\in\DD$.
Throughout the proof we will use the norm given
by the Littlewood-Paley identity
    \begin{equation}\label{LP1}
    \|f\|^2_{A^2_\om}=4\|f'\|^2_{A^2_{\om^\star}}+\om(\D)|f(0)|^2,
    \end{equation}
where $\omega^\star(z)=\int_{|z|}^1\omega(s)\log\frac{s}{|z|}s\,ds$~\cite[Theorem~4.2]{PelRat}. In fact, this relation gives rise to the definition of a family of Dirichlet-type spaces $H_\alpha(\om^\star)$ which form the more general context we will work in. What we will do in Section~\ref{Section:Schatten-Toeplitz} is to characterize Schatten class Toeplitz operators on $H_\alpha(\om^\star)$.

Although we follow Luecking's approach, our context leads to severe technical difficulties in the proof. For instance,
in the original argument one uses the closed formula $(1-\overline{z}\z)^{-(2+\alpha)}$ of the reproducing kernel of $A^2_\a$ to see that it
is essentially constant in each hyperbolically bounded region and to obtain asymptotic $L^p_\beta$-estimates for their derivatives. The general situation for $A^2_\om$ is much more complicated because of the lack of explicit expressions for $B^\om_z$. We will use the method from~\cite{PelRatproj}, that is based on a decomposition norm theorem, to deduce precise $L^p_v$-estimates of the derivatives of $B^\om_z$. Also, the question of when the reproducing kernels of $A^2_\om$ remain essentially constant in hyperbolically bounded regions is delicate because even a little perturbation in the weight, that does not change the space it self, might introduce zeros to the kernel functions \cite{ZeyPams10}. Therefore we will circumvent these and other obstacles in the proof by using different techniques.

We need some notation to state our result. Set $\vp_a(z)=(a-z)/(1-\overline{a}z)$ for $a,z\in\D$.
The pseudohyperbolic distance from $z$ to $w$ is $\varrho(z,w)=|\vp_z(w)|$, and the pseudohyperbolic
disc of center $a\in\D$ and radius $r\in(0,1)$ is denoted by
$\Delta(a,r)=\{z:\varrho(a,z)<r\}$. The polar rectangle associated with an arc $I\subset\T$ is
    $$
    R(I)=\left\{z\in\D:\,\frac{z}{|z|}\in I,\,\,1-\frac{|I|}{2\pi}\le |z|<1-\frac{|I|}{4\pi}\right\}.
    $$
Write $z_I=(1-|I|/2\pi)\xi$, where $\xi\in\T$ is the midpoint of $I$. Let $\Upsilon$ denote the family of all dyadic arcs of $\T$. Every arc $I\in\Upsilon$ is of the form
    $$
    I_{n,k}=\left\{e^{i\theta}:\,\frac{2\pi k}{2^n}\le
    \theta<\frac{2\pi(k+1)}{2^n}\right\},
    $$
where $k=0,1,2,\dots,2^n-1$ and $n=\N\cup\{0\}$.
Then the family $\left\{R(I):\,\,I\in\Upsilon\right\}$ consists of
pairwise disjoint rectangles whose union covers~$\D$. For
$I_j\in\Upsilon\setminus\{I_{0,0}\}$, we will write $z_j=z_{I_{j}}$.
For convenience, we associate
the arc $I_{0,0}$ with the point $1/2$.

\begin{theorem}\label{mainToeplitzbergman}
Let $0<p<\infty$, $\om\in\DD$ and $\mu$ be a positive Borel measure on~$\D$. Then the following assertions are equivalent:
\begin{enumerate}
\item[\rm (a)] $\mathcal{T}_\mu\in\SSS_p(A^2_\om)$;
\item[\rm (b)] $\sum_{R_j\in\Upsilon}
    \left(\frac{\mu(R_j)}{\omega^\star(z_j)}\right)^p<\infty$;
\item[\rm (c)] $z\mapsto\frac{\mu\left(\Delta(z,r)\right)}{\om^\star(z)}$ belongs to $L^p\left(\frac{1}{(1-|z|)^2}\right)$ for some $0<r<1$.
\end{enumerate}
\end{theorem}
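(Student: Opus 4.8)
The plan is to follow Luecking's scheme, replacing every step that relies on the explicit kernel of $A^2_\a$ by estimates valid for $\om\in\DD$. The starting point is the quadratic form identity $\langle\mathcal{T}_\mu f,f\rangle_{A^2_\om}=\int_\D|f|^2\,d\mu$, obtained from the reproducing property and Fubini, which shows that $\mathcal{T}_\mu$ is positive, so its singular values are its eigenvalues. Using the Littlewood--Paley relation \eqref{LP1} I would transfer the problem to the Dirichlet-type space $H_\a(\om^\star)$, where the norm is governed by $\|f'\|_{A^2_{\om^\star}}$; this is the setting in which the $L^p_v$-estimates for the derivatives of $B^\om_z$ coming from the decomposition norm theorem of \cite{PelRatproj} become available, and from them one records the normalization $\|B^\om_z\|^2_{A^2_\om}=B^\om_z(z)\asymp 1/\om^\star(z)$ together with the Bessel bound for the normalized kernels $b_{z_j}=B^\om_{z_j}/\|B^\om_{z_j}\|$ over the lattice $\{z_j\}$.

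The equivalence (b)$\Leftrightarrow$(c) is the soft, purely geometric part. Here I would use that $\om\in\DD$ forces $\om^\star$ to be essentially constant on each pseudohyperbolic disc $\Delta(z,r)$ and on each rectangle $R_j$, with $\om^\star(z)\asymp\om^\star(z_j)$ for $z\in R_j$, that the rectangles tile $\D$ with $\int_{R_j}(1-|z|)^{-2}\,dA(z)\asymp1$, and that for $z\in R_j$ the mass $\mu(\Delta(z,r))$ is comparable to the $\mu$-mass of a uniformly bounded union of neighbouring rectangles. Summing the integral in (c) over the tiles and invoking the bounded overlap of the dilated discs then yields the sum in (b), and conversely.

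For sufficiency (b)$\Rightarrow$(a) a one-sided form bound suffices. Using the sub-mean value property to compare $\int_{R_j}|f|^2\,d\mu$ with $|f(z_j)|^2\mu(R_j)$, I would dominate
\[
\langle\mathcal{T}_\mu f,f\rangle=\int_\D|f|^2\,d\mu\lesssim\sum_j\frac{\mu(R_j)}{\om^\star(z_j)}\,|\langle f,b_{z_j}\rangle|^2=\langle S^*D_\mu S f,f\rangle,
\]
where $Sf=(\langle f,b_{z_j}\rangle)_j$ and $D_\mu=\mathrm{diag}\bigl(\mu(R_j)/\om^\star(z_j)\bigr)$. Since $0\le\mathcal{T}_\mu\le C\,S^*D_\mu S$ as positive operators, the min--max principle gives $s_n(\mathcal{T}_\mu)\lesssim s_n(S^*D_\mu S)$, and the ideal inequality $\|S^*D_\mu S\|_{\SSS_p}\le\|S\|^2\,\|D_\mu\|_{\SSS_p}$, valid for every $0<p<\infty$, reduces matters to the Bessel bound $\|S\|<\infty$ and to $\|D_\mu\|_{\SSS_p}^p=\sum_j(\mu(R_j)/\om^\star(z_j))^p<\infty$, which is exactly (b).

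The delicate direction is necessity (a)$\Rightarrow$(b), and it is here that the possible zeros of $B^\om_z$ block the naive argument: testing $\mathcal{T}_\mu$ against $b_{z_j}$ and bounding $\int_{\Delta(z_j,r)}|b_{z_j}|^2\,d\mu$ from below would require $|b_{z_j}|^2\gtrsim1/\om^\star(z_j)$ throughout $\Delta(z_j,r)$, which can fail. To circumvent this I would replace the kernels by explicit zero-free test functions $g_j$ adapted to $A^2_\om$ and concentrated near $z_j$ (normalized powers of $(1-\overline{z_j}z)^{-1}$), for which $\|g_j\|_{A^2_\om}\asymp1$ and $|g_j|^2\asymp1/\om^\star(z_j)$ on all of $\Delta(z_j,r)$, with $L^p_v$-derivative estimates guaranteeing that, after splitting $\Upsilon$ into finitely many well-separated subfamilies, the $g_j$ are almost orthonormal. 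Then $\langle\mathcal{T}_\mu g_j,g_j\rangle\gtrsim\mu(\Delta(z_j,r))/\om^\star(z_j)\gtrsim\mu(R_j)/\om^\star(z_j)$, and for $p\ge1$ the diagonal--majorization inequality $\sum_j\langle\mathcal{T}_\mu g_j,g_j\rangle^p\lesssim\|\mathcal{T}_\mu\|_{\SSS_p}^p$ yields (b). For $0<p<1$ the majorization points the wrong way, so instead I would compress $\mathcal{T}_\mu$ to the span of a separated subfamily, using that $s_n(P\mathcal{T}_\mu P)\le s_n(\mathcal{T}_\mu)$ and that separation makes the compression almost diagonal, whence its $\SSS_p$ quasi-norm dominates the $\ell^p$-sum of its diagonal. \textbf{The main obstacle} is precisely this necessity step: producing test functions with genuine pointwise lower bounds and controlled almost-orthogonality in the absence of an explicit, zero-free kernel, which is exactly where the derivative $L^p_v$-estimates for $B^\om_z$ and the decomposition norm theorem do the heavy lifting.
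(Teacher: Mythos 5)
Your outline reproduces the broad architecture of the paper's argument (reduction to dyadic tiles, positivity of $\mathcal{T}_\mu$ via $\langle\mathcal{T}_\mu f,f\rangle_{A^2_\om}=\|f\|_{L^2(\mu)}^2$, and, for $0<p<1$, a diagonal-plus-off-diagonal decomposition over well-separated subfamilies), and your treatment of (b)$\Leftrightarrow$(c) is essentially the paper's geometric argument. Both Schatten-class directions, however, contain genuine gaps.

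For sufficiency, the inequality $\int_\D|f|^2\,d\mu\lesssim\sum_j\frac{\mu(R_j)}{\om^\star(z_j)}|\langle f,b_{z_j}\rangle|^2$ is false: since $|\langle f,b_{z_j}\rangle|^2\asymp|f(z_j)|^2\om^\star(z_j)$, the right-hand side vanishes whenever $f(z_j)=0$ for every $j$, while a point mass $\mu=\delta_w$ with $w\in R_{j_0}$ and $f(w)\ne0$ makes the left-hand side positive. Subharmonicity bounds $|f(z_j)|^2$ by an area average; it never bounds $\sup_{R_j}|f|^2$ by $|f(z_j)|^2$, so no form domination of this shape can hold, and the correct substitute (averages over enlarged tiles) destroys the $S^\star D_\mu S$ structure. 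Worse, the Bessel bound $\|S\|<\infty$ your ideal inequality needs is itself false on the class $\DD$: it says that $\sum_j\om^\star(z_j)\delta_{z_j}$ is a Carleson measure for $A^2_\om$, i.e. $\sum_{z_j\in S(a)}\widehat{\om}(z_j)(1-|z_j|)\lesssim\widehat{\om}(a)(1-|a|)$, which after summing over dyadic generations amounts to the reverse doubling condition $\sum_{n\ge n_a}\widehat{\om}(1-2^{-n})\lesssim\widehat{\om}(1-2^{-n_a})$. This is not implied by $\om\in\DD$ and fails for every rapidly increasing weight: for instance $\widehat{\om}(r)=\left(\log\frac{e}{1-r}\right)^{-2}$ gives $\sum_{n\ge n_a}n^{-2}\asymp n_a^{-1}\not\lesssim n_a^{-2}$. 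So your operator $S$ is unbounded precisely for the weights this theorem is designed to cover; this is why the paper proves sufficiency instead through the bounded and onto operator $J$ of Lemma~\ref{le:wo}, built from derivatives of the reproducing kernel, together with \cite[Propositions~1.29 and 1.30]{Zhu} and the kernel estimates of Corollary~\ref{co:kernelstimate}.

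For necessity when $0<p<1$, your plan has the right shape but misses the step that makes it work. You compress $\mathcal{T}_\mu$ with the full measure $\mu$; separation of the chosen centres alone does not make the compression almost diagonal, because the off-diagonal entries $\langle\mathcal{T}_\mu g_j,g_k\rangle=\int_\D g_j\overline{g_k}\,d\mu$ still see all of $\mu$, and estimating them tile by tile only returns $|E|_p^p\lesssim\sum_n\left(\mu(R_n)/\om^\star(z_n)\right)^p$ --- the full sum one is trying to control, not a small multiple of the diagonal sum over the subfamily, so nothing can be absorbed (recall that for $p<1$ Schur--Horn majorization goes the wrong way, so a small off-diagonal part is the \emph{only} route to $\sum_j\langle\mathcal{T}_\mu g_j,g_j\rangle^p\lesssim|\mathcal{T}_\mu|_p^p$). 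The paper, following Luecking, must first replace $\mu$ by its restrictions $\mu_{j,l}$ to the unions of separated tiles, using $|T_{\mu_{j,l}}|_p\le|T_\mu|_p$ for positive operators; only then does every off-diagonal entry involve a kernel evaluated far from its centre at a point where the measure actually lives, and the quantitative $o(1)$-smallness of $|E|_p^p$ as the separation parameter tends to its limit is extracted from the choice of a large $N$ making $(1-|z|)^{Np-2}\left(\om^\star_{-\a}(z)\right)^{p/2}$ regular, Corollary~\ref{co:kernelstimate} and Fubini's theorem. In your variant these inputs would have to be replaced by Forelli--Rudin type estimates for the explicit test functions together with Lemma~\ref{Lemma:replacement-Lemmas-Memoirs}, plus a Gram-matrix argument showing that $e_j\mapsto g_j$ extends to a bounded operator on $\ell^2$ (needed even in your $p\ge1$ step, since the $g_j$ are not orthonormal). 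None of this is carried out, so the phrase ``separation makes the compression almost diagonal'' is an assertion, not a proof.
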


We will deduce the case $p\ge 1$ by extending the proof of \cite[Theorem~6.11]{PelRat} to the class $\DD$. The result of Constantin~\cite[Theorem~4.3]{OC}, regarding to Bekoll\'e-Bonami weights, implies the assertion in the case $p\ge 1$ for regular weights. The proof is based on interpolation and does not work for the whole class $\DD$. The most involved part in the proof of Theorem~\ref{mainToeplitzbergman} in the case $p<1$ is to show that~(b) is a necessary condition for $\mathcal{T}_\mu$ to belong to $\SSS_p(A^2_\om)$. One obstacle here is that the condition $\sum_n|\langle T(e_n),e_n\rangle|^p<\infty$ does not describe Schatten class operators.

We will apply Theorem~\ref{mainToeplitzbergman} to characterize Schatten class composition operators on $A^2_\om$.
Each analytic self-map $\vp$ of $\D$ induces the composition operator $C_\vp(f)= f\circ\vp$
acting on $\H(\D)$. With regard to the theory of composition operators, we refer to~\cite{CowenMac95,Shapiro93,Zhu}.
The study of composition operators on spaces of analytic functions has attracted a lot of attention since the mid 60's~\cite{No68,Ry66}.
Since then, the number of papers concerning this topic has grown enormously. Despite this fact, it seems that the existing literature does not offer too much information about the basic properties of composition operators acting on the Bergman spaces $A^p_\om$ when $\om\in\DD$. Therefore, we will study these properties by passing on the way to Schatten classes. Now we proceed to the main findings in this direction and the definitions needed for the statements.

Let $\z\in\vp^{-1}(z)$ denote the set of the points
$\{\z_n\}$ in $\D$, organized by increasing moduli, such that
$\vp(\z_n)=z$ for all $n$, with each point repeated according to its multiplicity.
For a radial weight $\om$ and an analytic self-map $\vp$ of $\D$
we define the generalized Nevanlinna counting function as
    $$
    N_{\vp,\om^\star}(z)=\sum_{\z\in\vp^{-1}(z),}\om^\star\left(z\right),\quad z\in\D\setminus\{\vp(0)\}.
    $$
Here and from now on we set $\om^\star(\zeta)=0$ for $\z\in\T$. A change of variable together with \cite[Theorem~4.2]{PelRat} shows that
$\limsup_{|z|\to1^-}\frac{N_{\vp,\om^\star}(z)}{\om^\star(z)}<\infty$ is a sufficient condition for $C_\vp$ to be bounded on $A^p_\om$.
We will show that the counting function $N_{\vp,\om^\star}$ is subharmonic outside of the image of the origin under $\vp$, and further, in that set it
satisfies the sharp pointwise estimate $N_{\vp,\om^\star}(z)\le \omega^\star\left(\frac{z-\vp(0)}{1-\overline{\vp(0)}z}\right)$.
By using this, we will deduce that $\om\in\DD$ if and only if
$\limsup_{|z|\to1^-}\frac{N_{\vp,\om^\star}(z)}{\om^\star(z)}<\infty$ for each analytic self-map $\vp$.
Therefore, each $\vp$ induces a bounded composition operator on $A^p_\om$ when $\om\in\DD$, and
the class $\DD$ induces in a sense a natural setting for the study of composition operators in weighted Bergman spaces.

We will use the characterization of the $q$-Carleson measures for $A^p_\om$~\cite{PelRatMathAnn} to describe bounded and compact composition operators. This involves maximal functions and tent spaces, and therefore we define the non-tangential approach regions
    \begin{equation*}\label{eq:gammadeuintro}
    \Gamma(\z)=\left\{z\in \D:\,|\t-\arg
    z|<\frac12\left(1-\frac{|z|}{r}\right)\right\},\quad
    \z=re^{i\theta}\in\D\setminus\{0\},
    \end{equation*}
and $N(f)(\z)=\sup_{z\in\Gamma(\z)}|f(z)|$.

\begin{theorem}\label{Theorem:introduction-bounded-composition-operators}
Let $0<p,q<\infty$, $\omega\in\DD$ and $v$ be a radial weight, and let $\vp$ be an
analytic self-map of $\D$.
\begin{itemize}
\item[\rm(a)] If $p>q$, then the following assertions are equivalent:
\begin{enumerate}
\item[\rm(i)] $C_\vp:A^p_\om\to A^q_v$ is bounded;
\item[\rm(ii)] $C_\vp:A^p_\om\to A^q_v$ is compact;
\item[\rm(iii)] $N\left(\frac{N_{\vp,v^\star}}{\om^\star}\right)\in L^\frac{p}{p-q}_\om$.
%\displaystyle \int_\D\left(\sup_{z\in\Gamma(\z)}\frac{N_{\vp,v^\star}(z)}{\om^\star(z)}\right)^\frac{p}{p-q}\om(\z)\,dA(\z)<\infty$.
\end{enumerate}
\item[\rm(b)] If $q\ge p$, then $C_\vp:A^p_\om\to A^q_v$ is bounded if and only if
    $$
    \limsup_{|z|\to1^-}\frac{N_{\vp,v^\star}(z)}{\om^\star(z)^\frac{q}{p}}<\infty.
    $$
\item[\rm(c)] If $q\ge p$, then $C_\vp:A^p_\om\to A^q_v$ is compact if and only if
    $$
    \lim_{|z|\to1^-}\frac{N_{\vp,v^\star}(z)}{\om^\star(z)^\frac{q}{p}}=0.
    $$
\end{itemize}
\end{theorem}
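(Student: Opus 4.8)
The plan is to reduce the boundedness and compactness of $C_\vp\colon A^p_\om\to A^q_v$ to a Carleson embedding and then to invoke the description of $q$-Carleson measures for $A^p_\om$ obtained in \cite{PelRatMathAnn}. Since $\|C_\vp f\|_{A^q_v}^q=\int_\D|f\circ\vp|^q\,v\,dA=\int_\D|f|^q\,d\mu_\vp$, where $\mu_\vp=(v\,dA)\circ\vp^{-1}$ denotes the pull-back measure, the operator $C_\vp$ is bounded (resp.\ compact) from $A^p_\om$ to $A^q_v$ exactly when the embedding $A^p_\om\hookrightarrow L^q(\mu_\vp)$ is bounded (resp.\ compact), that is, when $\mu_\vp$ is a $q$-Carleson (resp.\ vanishing $q$-Carleson) measure for $A^p_\om$. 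Feeding $\mu_\vp$ into the characterization of such measures, the condition for $q\ge p$ reads $\sup_z\mu_\vp(\Delta(z,r))/\om^\star(z)^{q/p}<\infty$ for boundedness and $\lim_{|z|\to1^-}\mu_\vp(\Delta(z,r))/\om^\star(z)^{q/p}=0$ for compactness, whereas for $p>q$ boundedness is equivalent to $N\big(\mu_\vp(\Delta(\cdot,r))/\om^\star\big)\in L^{p/(p-q)}_\om$.

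The heart of the proof will then be to replace the geometric quantity $\mu_\vp(\Delta(z,r))$ by the counting function $N_{\vp,v^\star}(z)$ appearing in the statement, i.e.\ to establish the two-sided comparison $\mu_\vp(\Delta(z,r))\asymp N_{\vp,v^\star}(z)$ for $z$ near $\partial\D$ and a fixed $r\in(0,1)$. For the lower bound I would use the subharmonicity of $N_{\vp,v^\star}$ off $\vp(0)$, which yields the sub-mean value estimate $N_{\vp,v^\star}(z)\lesssim(1-|z|)^{-2}\int_{\Delta(z,r)}N_{\vp,v^\star}\,dA$, and then rewrite the average through the non-univalent change of variables $\int_\D(g\circ\vp)\,|\vp'|^2\,v^\star\,dA=\int_\D g\,N_{\vp,v^\star}\,dA$ taken with $g=\chi_{\Delta(z,r)}$, comparing the resulting integral over $\vp^{-1}(\Delta(z,r))$ with $\mu_\vp(\Delta(z,r))$. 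For the upper bound I would exploit the sharp pointwise estimate $N_{\vp,v^\star}(w)\le v^\star\big((w-\vp(0))/(1-\overline{\vp(0)}\,w)\big)$ together with the regularity forced by $\om\in\DD$, so that $v^\star$ and $\om^\star$ are essentially constant on each disc $\Delta(z,r)$ and the averaging can be undone. The point is that the disc average built into $\Delta(z,r)$ smooths the raw local mass of $v$ into the regularized weight $v^\star$, which is precisely why the comparison survives for a merely radial $v$.

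Granting this comparison, the three statements follow. For $q\ge p$ one substitutes $\mu_\vp(\Delta(z,r))\asymp N_{\vp,v^\star}(z)$ directly into the $\sup$ and $\lim$ conditions above to obtain (b) and (c). For $p>q$ the same substitution inside the non-tangential maximal function gives $N\big(\mu_\vp(\Delta(\cdot,r))/\om^\star\big)\asymp N\big(N_{\vp,v^\star}/\om^\star\big)$—here the outer maximal function conveniently absorbs the gap between $N_{\vp,v^\star}(z)$ and its disc averages—so boundedness is equivalent to (iii). The equivalence of (i) and (ii) in that range is the standard fact that when $p>q$ every $q$-Carleson measure for $A^p_\om$ is automatically a vanishing one, the finiteness of the $L^{p/(p-q)}_\om$ norm forcing the boundary tails to decay; hence boundedness and compactness coincide.

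I expect the main obstacle to be exactly the comparison $\mu_\vp(\Delta(z,r))\asymp N_{\vp,v^\star}(z)$: it must hold for an arbitrary analytic self-map $\vp$ and an arbitrary radial weight $v$, whose interior behaviour may differ substantially, and only the careful combined use of the subharmonicity and the sharp pointwise bound for $N_{\vp,v^\star}$ forces agreement near $\partial\D$, where alone the Carleson conditions are sensitive. A secondary point to verify is that the $q$-Carleson characterization of \cite{PelRatMathAnn}, and in particular its non-tangential maximal function form in the case $p>q$, is indeed available for the full class $\DD$ and not only for regular weights.
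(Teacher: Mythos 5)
Your first reduction is sound: $\|C_\vp f\|_{A^q_v}^q=\int_\D|f|^q\,d\mu_\vp$ for the pull-back measure $\mu_\vp=(v\,dA)\circ\vp^{-1}$, so everything hinges on when $\mu_\vp$ is a $q$-Carleson measure for $A^p_\om$. The gap is precisely the step you call the heart of the proof: the two-sided comparison $\mu_\vp(\Delta(z,r))\asymp N_{\vp,v^\star}(z)$ near $\T$ is false, and in fact \emph{neither} inequality holds, already for $\vp(z)=z$. The two quantities see different parts of $v$: $\mu_\vp(\Delta(z,r))$ registers only the mass of $v$ on $\vp^{-1}(\Delta(z,r))$ (for the identity, an annulus of width $\asymp1-|z|$ about $|z|$), whereas $N_{\vp,v^\star}(z)$ is built from the tail integrals $v^\star(\z)\asymp\int_{|\z|}^1(s-|\z|)v(s)\,ds$, which depend on $v$ all the way out to the boundary. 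Take $\vp=\mathrm{id}$ and $v=\sum_k\chi_{\{1-\e_k\le|w|<1-\e_k/2\}}$ with $\e_{k+1}\le\e_k^2$: at points $z_k$ with $1-|z_k|=\e_k^{1/2}$ the disc $\Delta(z_k,r)$ misses every annulus, so $\mu_\vp(\Delta(z_k,r))=0$ while $v^\star(z_k)\gtrsim\e_k^{3/2}>0$; and at points $w_k$ slightly beyond the $k$-th annulus but whose disc still reaches it, $\mu_\vp(\Delta(w_k,r))\gtrsim\e_k^2$ while $v^\star(w_k)\lesssim\e_k\sum_{j>k}\e_j\le2\e_k^3$. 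So both ratios blow up along sequences tending to $\T$. Your two mechanisms cannot repair this: the theorem assumes $v$ merely radial (only $\om\in\DD$), so "$v^\star$ essentially constant on $\Delta(z,r)$" has no basis; and the change of variables with $g=\chi_{\Delta(z,r)}$ produces $\int_{\vp^{-1}(\Delta(z,r))}|\vp'|^2v^\star\,dA$, which involves $|\vp'|^2v^\star$ rather than $v$, hence does not control $\mu_\vp(\Delta(z,r))$.

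What is true is only that the \emph{global} (sup/limsup) conditions match, and that is what the paper's argument delivers while bypassing any local comparison. Since by \cite[Theorem~1]{PelRatMathAnn} the Carleson condition depends only on the ratio $q/p$, the operator $C_\vp:A^p_\om\to A^q_v$ is bounded if and only if $C_\vp:A^{2p/q}_\om\to A^2_v$ is; the exact Littlewood--Paley identity \eqref{LP1} together with the non-univalent change of variables then converts $\|C_\vp f\|_{A^2_v}^2$ into $4\int_\D|f'|^2N_{\vp,v^\star}\,dA+v(\D)|f(\vp(0))|^2$, so the measure fed into the embedding machinery is $N_{\vp,v^\star}\,dA$ itself, never $\mu_\vp$. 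Condition (iii) of (a) and the integrated conditions of (b) and (c) then come from the characterization of bounded differentiation operators $A^p_\om\to L^q_\mu$ in \cite{PelRatMathAnn}, and the pointwise conditions follow from the integrated ones via the subharmonicity of $N_{\vp,v^\star}$ (Lemma~\ref{Nsubharmonic}) in one direction and Lemma~\ref{Lemma:replacement-Lemmas-Memoirs} in the other. (Your claim that boundedness forces compactness when $p>q$ is correct in spirit, but for the class $\DD$ the paper proves it directly by a normal-families argument rather than by quoting a vanishing-Carleson statement.) To salvage your outline, replace your step~3 by this Littlewood--Paley reduction; the pointwise comparison it rests on does not exist.
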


Probably the least trivial part in Theorem~\ref{Theorem:introduction-bounded-composition-operators} is to see that (iii) in (a) is satisfied if the operator is bounded. The proof of this part relies strongly on the characterizations of the $q$-Carleson measures for $A^p_\om$ and bounded differentiation operators from $A^p_\om$ to $L^q_\mu$ in terms of tent spaces~\cite{PelRatMathAnn}. The condition~(iii) in the classical case $C_\vp:A^p_\a\to A^q_\b$ gives a characterization of bounded (and compact) operators that differs from the one in the existing literature~\cite{SmithYang98}. One can certainly obtain an analogue of \cite[Theorem~1.1]{SmithYang98} for the subclass of $\DD$ consisting of
weights with the property
    \begin{equation}\label{pikkupillu}
    \om(r)\asymp\frac{\int_r^1\om(s)\,ds}{1-r},\quad 0\le r<1,
    \end{equation}
by using factorization~\cite[Theorem~3.1]{PelRat} and an atomic decomposition~\cite[Theorem~4.1]{LuecInd85} of $A^p_\om$-functions together with maximal theorems, but this approach does not work for the whole class~$\DD$. Since these calculations do not serve our purposes we do not discuss this approach here any further. For details and more, see \cite{PelSummer}.

Although weighted Bergman spaces induced by $\om\in\DD$ may lie closer to the Hardy space $H^p$ than any $A^p_\alpha$, an analogue of Theorem~\ref{Theorem:introduction-bounded-composition-operators}(a) does not remain true for Hardy spaces. In fact, if $q<p$ and $\vp(z)=z$, then $C_\vp: H^p\to H^q$ is bounded but not compact.

We will prove the asymptotic formula $\|C_\vp\|_{(A^p_\om\to A^p_\om)}^p\asymp \frac{1}{\widehat{\omega}(\vp(0))(1-|\vp(0)|)}$ for the operator norm when $\om$ belongs to a natural subclass of $\DD$. Moreover, we will show that the quantity in Theorem~\ref{Theorem:introduction-bounded-composition-operators}(b) is comparable to the essential norm of a bounded composition operator $C_\vp: A^p_\om\to A^q_v$, and we will study the relation between the compactness of $C_\vp$ on $A^p_\om$ and the existence of the angular derivative of $\vp$ on the boundary of $\D$. Finally, we will obtain the following description of the Schatten class composition operators.

\begin{theorem}\label{Thm:intro-SchattenMain}
Let $0<p<\infty$ and $\omega\in\DD$, and let $\vp$ be an
analytic self-map of $\D$. Then $C_\vp\in\SSS_p(A^2_\omega)$ if
and only if
    \begin{equation}\label{37}
    \int_\D\left(\frac{N_{\vp,\omega^\star}(z)}{\omega^\star(z)}\right)^\frac{p}{2}\frac{dA(z)}{(1-|z|)^2}<\infty.
    \end{equation}
\end{theorem}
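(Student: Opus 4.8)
The plan is to deduce Theorem~\ref{Thm:intro-SchattenMain} from the Toeplitz characterization in Theorem~\ref{mainToeplitzbergman} by identifying $C_\vp^\ast C_\vp$ with a suitable Toeplitz operator. Recall that $C_\vp\in\SSS_p(A^2_\om)$ if and only if the positive operator $C_\vp^\ast C_\vp$ belongs to $\SSS_{p/2}(A^2_\om)$, because the singular values of $C_\vp$ are the square roots of the eigenvalues of $C_\vp^\ast C_\vp$. So the whole matter reduces to recognizing $C_\vp^\ast C_\vp$, up to a harmless finite-rank perturbation, as a Toeplitz operator whose symbol measure is governed by $N_{\vp,\om^\star}$, and then reading off the condition from Theorem~\ref{mainToeplitzbergman}.

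First I would compute the quadratic form of $C_\vp^\ast C_\vp$. Applying the Littlewood--Paley identity~\eqref{LP1} to $C_\vp f=f\circ\vp$ and using $(f\circ\vp)'=(f'\circ\vp)\,\vp'$ yields
\[
\|C_\vp f\|_{A^2_\om}^2=4\int_\D|f'(\vp(z))|^2\,|\vp'(z)|^2\,\om^\star(z)\,dA(z)+\om(\D)|f(\vp(0))|^2.
\]
The non-univalent change of variable $w=\vp(z)$, the same one behind the sufficiency of $\limsup_{|z|\to1^-}N_{\vp,\om^\star}(z)/\om^\star(z)<\infty$ for boundedness, turns the first integral into $4\int_\D|f'(w)|^2N_{\vp,\om^\star}(w)\,dA(w)$. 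Since $f\mapsto 2f'$ is, modulo constants, a unitary map of $A^2_\om$ onto $A^2_{\om^\star}$ by~\eqref{LP1}, and since $\om\in\DD$ implies that $\om^\star$ is a regular weight (in particular $\om^\star\in\DD$), so that $\om^\star(w)\asymp\om^\star(z)$ and $(\om^\star)^\star(z)\asymp(1-|z|)^2\om^\star(z)$ for $w\in\Delta(z,r)$, the operator $C_\vp^\ast C_\vp$ is unitarily equivalent, up to the rank-one term carried by the point evaluation at $\vp(0)$, to the Toeplitz operator $\mathcal{T}_{d\mu}$ acting on $A^2_{\om^\star}$ with $d\mu=N_{\vp,\om^\star}\,dA$. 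Finite-rank operators lie in every $\SSS_{p/2}$, so this perturbation does not affect membership.

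Applying Theorem~\ref{mainToeplitzbergman} to the weight $\om^\star$, the measure $d\mu=N_{\vp,\om^\star}\,dA$ and the exponent $p/2$, condition~(c) becomes
\[
\int_\D\left(\frac{\int_{\Delta(z,r)}N_{\vp,\om^\star}(w)\,dA(w)}{(\om^\star)^\star(z)}\right)^{p/2}\frac{dA(z)}{(1-|z|)^2}<\infty.
\]
By the comparabilities recorded above this is the requirement that the disc average $(1-|z|)^{-2}\int_{\Delta(z,r)}N_{\vp,\om^\star}\,dA$, divided by $\om^\star(z)$, belong to $L^{p/2}\!\big(dA/(1-|z|)^2\big)$, and it remains to match this averaged condition with the pointwise condition~\eqref{37}.

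The final step hinges on comparing the disc average of $N_{\vp,\om^\star}$ with its value at the centre. The implication from condition~(c) to~\eqref{37} is the easy half and holds for all $p$: since $N_{\vp,\om^\star}$ is subharmonic off $\vp(0)$, the sub-mean value property gives $N_{\vp,\om^\star}(z)\lesssim(1-|z|)^{-2}\int_{\Delta(z,r)}N_{\vp,\om^\star}\,dA$, so~\eqref{37} is dominated by the averaged integral. For the converse, if $p\ge2$ then $p/2\ge1$ and Jensen's inequality together with Fubini bound the averaged integral by the pointwise one at once. The genuinely delicate case, and the main obstacle, is $p<2$: here $p/2<1$, averaging can enlarge an $L^{p/2}$ quantity, and subharmonicity of $N_{\vp,\om^\star}$ is not enough because $N_{\vp,\om^\star}^{p/2}$ need not be subharmonic. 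I expect to resolve this by proving a sub-mean value inequality for $N_{\vp,\om^\star}^{p/2}$ on pseudohyperbolic discs, using not only subharmonicity but also the sharp majorant $N_{\vp,\om^\star}(z)\le\om^\star(\vp_{\vp(0)}(z))$ and the regularity of $\om^\star$. Granting such an inequality, the reverse-H\"older bound
\[
\frac{1}{(1-|z|)^2}\int_{\Delta(z,r)}N_{\vp,\om^\star}\,dA\lesssim\left(\frac{1}{(1-|z|)^2}\int_{\Delta(z,r')}N_{\vp,\om^\star}^{p/2}\,dA\right)^{2/p}
\]
followed by an elementary averaging estimate at exponent one yields the implication \eqref{37}$\Rightarrow$(c), completing the proof.
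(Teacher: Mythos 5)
Your reduction follows the same route as the paper's own proof: both arguments use the Littlewood--Paley identity \eqref{LP1} together with the non-univalent change of variable $w=\vp(z)$ to identify $C_\vp^\ast C_\vp$, modulo a finite-rank term and the unitary $f\mapsto 2f'$, with the Toeplitz operator with symbol measure $d\mu=N_{\vp,\omega^\star}\,dA$ acting on $A^2_{\omega^\star}$ (the paper normalizes $\vp(0)=0$ by composing with $C_{\vp_{\vp(0)}}$ and conjugates by the differentiation operator $D$, obtaining $T^\star T=\mathcal{T}_\mu$; your rank-one-perturbation bookkeeping achieves the same thing, and is legitimate since finite-rank operators lie in $\SSS_{p/2}$ for every $p>0$). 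Both proofs then apply the Schatten-class Toeplitz characterization for the weight $\om^\star$, which is regular by Lemma~\ref{le:sc1} and hence in $\DD$, with exponent $p/2$, and finally compare the resulting averaged condition with the pointwise condition \eqref{37} by means of the subharmonicity of $N_{\vp,\omega^\star}$ from Lemma~\ref{Nsubharmonic}.

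The one genuine gap is the step you explicitly leave conditional: the implication \eqref{37}$\Rightarrow$(c) of Theorem~\ref{mainToeplitzbergman} when $0<p<2$. You correctly observe that subharmonicity of $N_{\vp,\omega^\star}$ does not give subharmonicity of $N_{\vp,\omega^\star}^{p/2}$, but the inequality you say you ``expect to prove'' does not require the Littlewood majorant $N_{\vp,\om^\star}(z)\le\om^\star\left(\vp_z(\vp(0))\right)$ or the regularity of $\om^\star$ at all: for any nonnegative subharmonic function $u$ and any exponent $0<q<\infty$ one has $u(a)^q\lesssim(1-|a|)^{-2}\int_{\Delta(a,s)}u^q\,dA$, which is a classical result of Hardy and Littlewood \cite{HLCrelle32} (see also \cite[Lemma~3]{LuZhu92}). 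This is exactly the ingredient the paper invokes inside Lemma~\ref{le:equiv}, which carries out the same averaged-versus-pointwise comparison (in discretized, dyadic form) uniformly for all $0<p<\infty$. Once your conjectured sub-mean value inequality is replaced by this citation, your proposal becomes a complete proof that coincides in substance with the paper's; without it, the case $0<p<2$ --- the only nontrivial direction of that final comparison --- remains unproven.
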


This result is an extension of \cite[Theorem~3]{LuZhu92} which concerns the classical case $C_\vp:A^2_\a\to A^2_\a$. It is known that in this case there are several ways to characterize the Schatten class composition operators~\cite{Zhu}.

\section{Integrability of reproducing kernels of Dirichlet spaces}

To state our main results concerning the $L^p$-behavior of reproducing kernels, we introduce
a family of Hilbert spaces. For and $\om\in\DD$, the Hilbert space $\Dom$ consists of
$f\in\H(\D)$ such that
    $$
    \|f\|^2_{\Dom}=|f(0)|^2\om(\D)+  \int_\D |f'(z)|^2\,\om(z)\,dA(z)<\infty.
    $$
We will work with the inner product on $\Dom$ defined by
    \begin{equation}
    \begin{split}\label{Eq:InnerProduct}
    \langle f,g\rangle_{\Dom}
    &=f(0)\overline{g(0)}\om(\D)+\int_\D
    f'(z)\overline{g'(z)}\,\om(z)\,dA(z).
    \end{split}
    \end{equation}
Let $K^{\omega}_a\in
\Dom$ be the corresponding reproducing kernels, that is,
    \begin{equation}\label{eq:repro}
    f(a)=f(0)\overline{K^{\omega}_a(0)}\om(\D)
    +\int_\D f'(z)\overline{\frac{\partial K^{\omega}_a(z)}{\partial z}}\,\om(z)\,dA(z),\quad f\in \Dom.
    \end{equation}
Write
    $$
    M_p^p(r,f)=\frac1{2\pi}\int_0^{2\pi}|f(re^{i\t})|^p\,d\t
    $$
and $M_\infty(r,f)=\max_{|z|=r}|f(z)|$. With these preparations we can state the main results of this section.

\begin{theorem}\label{th:kernelstimate}
Let $0<p<\infty$, $\om\in\DD$ and $N\in\N\cup\{0\}$. Then the following assertions hold:
\begin{enumerate}
\item[\rm(i)]$\displaystyle M_p^p\left(r,\left(K^{\omega}_a\right)^{(N)}\right)\asymp
    \int_0^{|a|r}\frac{dt}{\widehat{\om}(t)^{p}(1-t)^{p(N-1)}},\quad r,|a|\to1^-.$
\item[\rm(ii)] If $v\in\DD$, then
    \begin{equation}\label{k1}
    \|\left(K^{\omega}_a\right)^{(N)}\|^p_{A^p_v}%\int_{\D}\left|\left(K^{\omega}_a\right)^{(N)}(z)\right|^p v(z)\,dA(z)
    \asymp \int_0^{|a|}\frac{\widehat{v}(t)}{\widehat{\om}(t)^{p}(1-t)^{p(N-1)}}\,dt,\quad |a|\to1^-.
    \end{equation}
\end{enumerate}
\end{theorem}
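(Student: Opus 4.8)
The plan is to reduce both assertions to a single sharp two-sided estimate for the integral means of one power series with nonnegative, regularly behaving coefficients, and then to obtain (ii) from (i) by integration against $v$.

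\emph{Step 1: the series expansion.} Since $\om$ is radial, the monomials are orthogonal in $\Dom$, and \eqref{Eq:InnerProduct} gives $\|z^n\|_{\Dom}^2=2n^2\int_0^1 r^{2n-1}\om(r)\,dr$ for $n\ge1$ and $\|1\|_{\Dom}^2=\om(\D)$. Hence
\begin{equation*}
K^{\om}_a(z)=\frac1{\om(\D)}+\sum_{n=1}^{\infty}\frac{(\overline{a}z)^n}{2n^2\int_0^1 r^{2n-1}\om(r)\,dr}.
\end{equation*}
The hypothesis $\om\in\DD$ enters only through the moment asymptotic $\int_0^1 r^{2n-1}\om(r)\,dr\asymp\wo(1-1/n)$, so the Taylor coefficients of $(K^{\om}_a)^{(N)}$ are comparable to $n^{N-2}\,\overline{a}^{\,n}/\wo(1-1/n)$. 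I would record this moment estimate (and the comparability $\wo(1-1/(2n-1))\asymp\wo(1-1/n)$) as a consequence of the doubling property.

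\emph{Step 2: reduction to one variable.} Writing $(K^{\om}_a)^{(N)}(z)=z^{-N}h(\overline a z)$ with $h(w)=\sum_n c_n w^n$ and $c_n\asymp n^{N-2}/\wo(1-1/n)\ge0$, a rotation of the angular variable yields the exact identity $M_p^p(r,(K^{\om}_a)^{(N)})=r^{-Np}M_p^p(|a|r,h)$. As $r\to1^-$ the prefactor is harmless, so (i) is equivalent to the estimate $M_p^p(s,h)\asymp\int_0^s\wo(t)^{-p}(1-t)^{-p(N-1)}\,dt$ as $s\to1^-$.

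\emph{Step 3: the main estimate, and the main obstacle.} Because $h$ has nonnegative coefficients, $M_\infty(t,h)=h(t)$, and summing the positive series by the standard $\DD$-estimates for $\wo$ gives $h(t)\asymp\wo(t)^{-1}(1-t)^{-(N-1)}$ on the range where the right-hand side is $\gtrsim1$. The decisive feature is that the coefficients $c_n$ are \emph{regular} (monotone up to constants and doubling, as forced by $\om\in\DD$), so that $h$ is far from lacunary: grouping the $c_n$ into dyadic blocks $n\in[2^k,2^{k+1})$, on each of which $c_n$ is essentially constant, reduces each block to a polynomial for which $M_p\asymp M_\infty$, and a Hardy--Littlewood type argument then controls the $L^p$-mean of the whole series. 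The upper bound is the genuine obstacle: the trivial inequality $M_p\le M_\infty$ is too lossy in the regime $p(N-1)<1$, where $M_\infty(s,h)$ may blow up while the claimed integral stays bounded, so one must exploit the cancellation in the $L^p$-mean rather than estimate pointwise. Carrying this out, together with the elementary comparison $\int_0^s h(t)^p\,dt\asymp\int_0^s\wo(t)^{-p}(1-t)^{-p(N-1)}\,dt$ (the two integrals differ only by a bounded term, while the right-hand one is bounded below for $s$ near $1$), produces the displayed estimate and hence (i).

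\emph{Step 4: from (i) to (ii).} Writing $\|(K^{\om}_a)^{(N)}\|_{A^p_v}^p=2\int_0^1 M_p^p(r,(K^{\om}_a)^{(N)})\,v(r)\,r\,dr$, inserting (i) and applying Fubini's theorem gives
\begin{equation*}
\|(K^{\om}_a)^{(N)}\|_{A^p_v}^p\asymp\int_0^{|a|}\frac{1}{\wo(t)^p(1-t)^{p(N-1)}}\left(\int_{t/|a|}^1 v(r)\,r\,dr\right)dt,
\end{equation*}
and since $v\in\DD$ the inner integral is comparable to $\widehat v(t/|a|)\asymp\widehat v(t)$ as $|a|\to1^-$, which is exactly \eqref{k1}. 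I expect Step~3 to absorb essentially all the difficulty, Steps~1, 2 and~4 being bookkeeping once the $\DD$-estimates for $\wo$ are in place.
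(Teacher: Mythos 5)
Your Steps 1, 2 and 4 are sound --- Step 4 in particular coincides with how the paper itself deduces (ii) for general $v\in\DD$ once (i) is available --- but Step 3, which you correctly identify as carrying all the weight, breaks down at its central claim. A dyadic block $\sum_{n\in[2^k,2^{k+1})}c_nz^n$ whose nonnegative coefficients are essentially constant, say equal to $c$, is a modulated Dirichlet kernel, and for it $M_p\asymp M_\infty$ is false for every finite $p$: at any radius $r\ge1-2^{-k}$ one has $M_\infty\asymp c\,2^k$, while $M_p\asymp c\,2^{k(1-1/p)}$ for $p>1$, $M_1\asymp c\,k$, and $M_p\asymp c$ for $0<p<1$. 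The lost factor $2^{k/p}$ is exactly what is at stake in the regime $p(N-1)<1$ that you single out, so the proposed block mechanism cannot yield the two-sided estimate. Moreover, ``a Hardy--Littlewood type argument'' cannot control both directions: the Hardy--Littlewood coefficient inequalities (\cite[Theorem~6.2]{Duren1970}, which are what Lemma~\ref{HL} encodes) are one-sided, giving the lower bound of (i) only for $0<p\le2$ and the upper bound only for $p\ge2$. Consequently it is not only the upper bound that is hard: for $p>2$ the nontrivial direction is the lower bound, which your sketch leaves untouched --- positivity of the coefficients only yields $M_p^p(s,h)\gtrsim(1-s)h(s)^p$, which is strictly weaker than the integral whenever the latter is not endpoint-dominated (take $\wo(t)=(1-t)^{1/p}$ and $N=1$, where the integral grows like $\log\frac{1}{1-s}$ but $(1-s)h(s)^p\asymp1$).

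For comparison, the paper closes these gaps by a different route and in the opposite logical order. It first proves (ii) for regular continuous weights, using the Littlewood--Paley formula of \cite{PavP} to raise the order of differentiation and then the decomposition-norm machinery of \cite{PelRatproj}, whose blocks are smooth Ces\`aro-type polynomials adapted to the weight rather than raw dyadic blocks, precisely so that their $H^p$-means can be estimated without any claim of the form $M_p\asymp M_\infty$. It then obtains (i) by combining Lemma~\ref{HL} (which supplies the easy direction for each $p$) with the Littlewood--Paley embeddings $\mathcal{D}^p_{p-1}\subset H^p$ for $p\le2$ and $H^p\subset\mathcal{D}^p_{p-1}$ for $p\ge2$, applied with the regular weight $(1-|z|)^{p-1}$: the missing direction of the means estimate is imported from the already established area estimate. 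Only then does (ii) for general $v\in\DD$ follow by your Fubini argument. If you want to keep your order and prove (i) first, you must replace the block claim by an honest mechanism --- for the upper bound, say, a pointwise decay estimate of the type $|h(se^{i\theta})|\lesssim h(1-\max\{1-s,|\theta|\})$ derived from Lemma~\ref{Lemma:replacement-Lemmas-Memoirs}, together with a separate argument for the lower bound when $p>2$ --- neither of which is present in your proposal.
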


It is clear by the proof that the asymptotic inequality $\lesssim$ in \eqref{k1} is valid for any radial weight $v$. The following consequence of Theorem~\ref{th:kernelstimate} is often more useful than the theorem itself.

\begin{corollary}\label{co:ernelstimaten}
Let $0<p<\infty$, $\om\in\DD$ and $N\in\N\cup\{0\}$. Then the following assertions hold:
\begin{enumerate}
\item[\rm(i)]
    $\displaystyle
    M_p^p\left(r,\left(K^{\omega}_a\right)^{(N)}\right)\asymp \frac{1}{\widehat{\om}(ar)^{p}(1-|a|r)^{p(N-1)-1}},\quad r, |a|\to1^-,
    $\\
if and only if
    \begin{equation}\label{Eq:hypothesis-kernelmeans}
    \int_0^{|a|}\frac{dt}{\widehat{\om}(t)^{p}(1-t)^{p(N-1)}}\lesssim\frac{1}{\widehat{\om}(a)^{p}(1-|a|)^{p(N-1)-1}},\quad |a|\to1^-.
    \end{equation}
\item[\rm(ii)] If $v\in\DD$, then
\begin{equation}\label{kn4}
    %\int_{\D}\left|\left(K^{\omega}_a\right)^{(N)}(z)\right|^p v(z)\,dA(z)
    \|\left(K^{\omega}_a\right)^{(N)}\|^p_{A^p_v}\asymp\frac{\widehat{v}(a)}{\widehat{\om}(a)^{p}(1-|a|)^{p(N-1)-1}},\quad |a|\to1^-,
    \end{equation}
    if and only if
    \begin{equation}\label{Eq:hypothesis-kernel}
    \int_0^r\frac{\widehat{v}(t)}{\widehat{\om}(t)^{p}(1-t)^{p(N-1)}}\,dt\lesssim\frac{\widehat{v}(r)}{\widehat{\om}(r)^{p}(1-r)^{p(N-1)-1}},\quad r\to1^-.
    \end{equation}
\end{enumerate}
\end{corollary}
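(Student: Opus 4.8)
The plan is to deduce both statements directly from Theorem~\ref{th:kernelstimate} by analyzing when the integral appearing there is comparable to its stated closed form. Writing $g(t)=\frac{1}{\widehat{\om}(t)^{p}(1-t)^{p(N-1)}}$, observe that the target expression in~(i) is nothing but $(1-x)g(x)$ evaluated at $x=|a|r$, while Theorem~\ref{th:kernelstimate}(i) gives $M_p^p(r,(K^{\om}_a)^{(N)})\asymp\int_0^{|a|r}g(t)\,dt$. Hence the asserted asymptotic equivalence in~(i) is the same as $\int_0^x g(t)\,dt\asymp(1-x)g(x)$ as $x=|a|r\to1^-$, and since $|a|$ and $r$ vary independently this is a statement about a single variable $x\to1^-$. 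The forward implication is then immediate: the asymptotic equivalence contains in particular the upper bound $\int_0^x g\lesssim(1-x)g(x)$, which is exactly~\eqref{Eq:hypothesis-kernelmeans}.

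For the converse I would show that the matching lower bound $\int_0^x g(t)\,dt\gtrsim(1-x)g(x)$ holds automatically, so that~\eqref{Eq:hypothesis-kernelmeans} already forces the full equivalence. The key is the doubling hypothesis $\om\in\DD$. Integrating only over the subinterval $[2x-1,x]$, whose length is $1-x$, one has $1-t\asymp1-x$ there, and the decrease of $\widehat{\om}$ together with $\widehat{\om}(2x-1)\le C\widehat{\om}\left(\frac{1+(2x-1)}{2}\right)=C\widehat{\om}(x)$ yields $\widehat{\om}(t)\asymp\widehat{\om}(x)$ on this interval. Consequently $g(t)\asymp g(x)$ throughout $[2x-1,x]$ (regardless of the sign of the exponent $p(N-1)$, since $1-t$ is trapped between two fixed multiples of $1-x$), whence $\int_0^x g\ge\int_{2x-1}^x g\asymp(1-x)g(x)$. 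Combining this with~\eqref{Eq:hypothesis-kernelmeans} gives $\int_0^x g\asymp(1-x)g(x)$, which is the desired equivalence, proving~(i).

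Part~(ii) follows the same scheme with $g$ replaced by $\widetilde g(t)=\frac{\widehat{v}(t)}{\widehat{\om}(t)^{p}(1-t)^{p(N-1)}}$ and Theorem~\ref{th:kernelstimate}(ii) used in place of~(i), so that the norm is comparable to $\int_0^{|a|}\widetilde g$ and the target is $(1-x)\widetilde g(x)$. The only additional point is that the automatic lower bound now also requires $\widehat{v}(t)\asymp\widehat{v}(x)$ on $[2x-1,x]$; this is where the hypothesis $v\in\DD$ enters, via $\widehat{v}(2x-1)\le C\widehat{v}(x)$, exactly as for $\widehat{\om}$. With this, $\int_0^x\widetilde g\gtrsim(1-x)\widetilde g(x)$ is automatic and~\eqref{Eq:hypothesis-kernel} supplies the reverse bound, giving the equivalence.

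The argument is essentially routine once set up this way; the one point deserving care is the automatic lower bound, i.e.\ recognizing that the doubling of $\widehat{\om}$ (and $\widehat{v}$) makes the integrand nearly constant over a boundary layer of width $1-x$, so that only the upper bound on the integral is a genuine hypothesis. I would also make explicit that the two-variable statement in the means (with $\widehat{\om}(ar)$ and $1-|a|r$) reduces to the one-variable condition in~\eqref{Eq:hypothesis-kernelmeans} because $|a|r$ sweeps out a full left neighborhood of $1$ as $|a|,r\to1^-$.
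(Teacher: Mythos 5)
Your proposal is correct and follows essentially the same route as the paper: both reduce the statement to Theorem~\ref{th:kernelstimate}, observe that the lower bound $\int_0^x g(t)\,dt\gtrsim (1-x)g(x)$ is automatic because the doubling property of $\widehat{\om}$ (and of $\widehat{v}$ in part (ii)) makes the integrand essentially constant on a boundary interval of length $\asymp 1-x$ (the paper's \eqref{bi} and its displayed estimate over $[2|a|-1,|a|]$ are exactly your computation), so that only the upper bound is a genuine hypothesis. Your explicit remark that the two-variable asymptotics at $|a|r$ reduce to a one-variable condition is a point the paper leaves implicit, but it is not a different argument.
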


Auxiliary results will be needed to prove Theorem~\ref{th:kernelstimate} and deduce Corollary~\ref{co:ernelstimaten}. The following lemma contains several properties of weights in $\DD$ and will be repeatedly used throughout the paper. The proof is straightforward and therefore omitted.

\begin{letterlemma}\label{Lemma:replacement-Lemmas-Memoirs}
Let $\om$ be a radial weight. Then the following conditions are equivalent:
\begin{itemize}
\item[\rm(i)] $\om\in\DD$;
\item[\rm(ii)] There exist $C=C(\om)>0$ and $\b_0=\b_0(\om)>0$ such that
    \begin{equation*}%\label{Eq:replacement-Lemma1.1}
    \begin{split}
    \widehat{\om}(r)\le C\left(\frac{1-r}{1-t}\right)^{\b}\widehat{\om}(t),\quad 0\le r\le t<1,
    \end{split}
    \end{equation*}
for all $\b\ge\b_0$;
\item[\rm(iii)] There exist $C=C(\om)>0$ and $\gamma_0=\gamma_0(\om)>0$ such that
    \begin{equation*}%\label{Eq:replacement-Lemma1.2}
    \begin{split}
    \int_0^t\left(\frac{1-t}{1-s}\right)^\g\om(s)\,ds
    \le C\widehat{\om}(t),\quad 0\le t<1,
    \end{split}
    \end{equation*}
for all $\g\ge\g_0$;
\item[\rm(iv)] The asymptotic equality
    $$
    \int_0^1s^x\om(s)\,ds\asymp\widehat{\om}\left(1-\frac1x\right),\quad x\in[1,\infty),
    $$
is valid;
%\item[\rm(vii)] There exists $\lambda_0=\lambda_0(\om)\ge0$ such that
%    $$
%    \int_\D\frac{\om(z)}{|1-\overline{\z}z|^{\lambda+1}}\,dA(z)\asymp\frac{\widehat{\om}(\zeta)}{(1-|\z|)^\lambda},\quad \z\in\D,
%    $$
%for all $\lambda>\lambda_0$;
\item[\rm(iv)] $\om^\star(z)\asymp\widehat{\om}(z)(1-|z|)$, $|z|\to1^-$,
where
    $$
    \omega^\star(z)=\int_{|z|}^1\omega(s)\log\frac{s}{|z|}s\,ds,\quad z\in\D\setminus\{0\}.
    $$
\end{itemize}
\end{letterlemma}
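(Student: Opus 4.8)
\emph{The plan} is to prove the equivalences cyclically, treating the doubling condition~(i) as the hub: I will establish $\mathrm{(i)}\Leftrightarrow\mathrm{(ii)}$, then $\mathrm{(ii)}\Rightarrow\mathrm{(iii)}\Rightarrow\mathrm{(i)}$, and finally connect the moment asymptotic in~(iv) and the relation $\om^\star(z)\asymp\widehat{\om}(z)(1-|z|)$ to these. The two recurring devices are iteration of the defining inequality $\widehat{\om}(r)\le C\widehat{\om}(\frac{1+r}{2})$ and integration by parts via $\om(s)\,ds=-d\widehat{\om}(s)$, which converts pointwise information about the tail $\widehat{\om}$ into integral conditions and back. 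A third elementary ingredient is the bound $e^{-u}\le C_\g u^{-\g}$ for $u\ge1$ and any $\g>0$, which lets me compare the monomials $s^x$ appearing in~(iv) with the power weights $(\frac{1-t}{1-s})^{\g}$ appearing in~(iii): taking $x=1/(1-t)$ and $u=\frac{1-s}{1-t}\ge1$ gives $s^x\le e^{-x(1-s)}=e^{-u}\lesssim u^{-\g}=(\frac{1-t}{1-s})^{\g}$ for $s\le t$.

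First, $\mathrm{(i)}\Leftrightarrow\mathrm{(ii)}$. Setting $t=\frac{1+r}{2}$ in~(ii) gives $\frac{1-r}{1-t}=2$ and hence~(i). For the converse I iterate: with $r_0=r$ and $r_{n+1}=\frac{1+r_n}{2}$ one has $1-r_n=2^{-n}(1-r)$ and $\widehat{\om}(r)\le C^n\widehat{\om}(r_n)$, so choosing $n\asymp\log_2\frac{1-r}{1-t}$ with $r_n\le t$ and using monotonicity of $\widehat{\om}$ yields~(ii) with $\b_0=\log_2 C$. Next, $\mathrm{(ii)}\Rightarrow\mathrm{(iii)}$ follows by integrating by parts in $\int_0^t(\frac{1-t}{1-s})^{\g}\om(s)\,ds$; the term that must be controlled is $\g(1-t)^{\g}\int_0^t(1-s)^{-\g-1}\widehat{\om}(s)\,ds$, which~(ii) bounds by a constant multiple of $\widehat{\om}(t)$ as soon as $\g>\b_0$ (the remaining boundary terms are harmless). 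Finally, $\mathrm{(iii)}\Rightarrow\mathrm{(i)}$: applying~(iii) with $t=\frac{1+r}{2}$ and keeping only $s\in[r,\frac{1+r}{2}]$, where $\frac{1-t}{1-s}\ge\frac12$, gives $2^{-\g}\bigl(\widehat{\om}(r)-\widehat{\om}(\frac{1+r}{2})\bigr)\le C\widehat{\om}(\frac{1+r}{2})$, which is~(i).

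For the moment condition~(iv) I may assume~(iii). Splitting $\int_0^1 s^x\om$ at $r_x=1-\frac1x$, on $[r_x,1]$ the factor $s^x\le1$ gives $\widehat{\om}(r_x)$, while on $[0,r_x]$ the comparison $s^x\lesssim(\frac{1-r_x}{1-s})^{\g}$ reduces the integral to~(iii); the matching lower bound is trivial and hypothesis-free, since $\int_{r_x}^1 s^x\om\ge r_x^{\,x}\widehat{\om}(r_x)\gtrsim\widehat{\om}(r_x)$ because $r_x^{\,x}=(1-\frac1x)^x$ stays bounded away from $0$. For the $\om^\star$ relation I integrate by parts in $\om^\star(z)=\int_{|z|}^1 s\log\frac{s}{|z|}\,\om(s)\,ds$, obtaining $\om^\star(z)=\int_{|z|}^1\bigl(\log\frac{s}{|z|}+1\bigr)\widehat{\om}(s)\,ds$ (the boundary terms vanish because $\widehat{\om}(1)=0$ and $\log1=0$). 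Since $\log\frac{s}{|z|}+1\asymp1$ as $|z|\to1^-$, this is the universal identity $\om^\star(z)\asymp\int_{|z|}^1\widehat{\om}(s)\,ds$, so the relation $\om^\star(z)\asymp\widehat{\om}(z)(1-|z|)$ becomes $\int_r^1\widehat{\om}(s)\,ds\asymp(1-r)\widehat{\om}(r)$; its upper bound is automatic and, under~(i), its lower bound follows by restricting to $[r,\frac{1+r}{2}]$.

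The main obstacle is the \emph{converse} directions, recovering~(i) from the asymptotic identities: one cannot subtract two-sided estimates, and a single-step estimate loses the constant, yielding only $(c-\tfrac12)\widehat{\om}(r)\le C\widehat{\om}(\frac{1+r}{2})$, which is vacuous when $c\le\frac12$. The remedy is a self-improvement argument using the hypothesis at both $r$ and $\frac{1+r}{2}$. For the $\om^\star$ relation, write $\Psi(r)=\int_r^1\widehat{\om}(s)\,ds$; the hypothesis reads $-\Psi'(r)/\Psi(r)\asymp(1-r)^{-1}$, and integrating this over $[r,\frac{1+r}{2}]$ gives $\Psi(r)\asymp\Psi(\frac{1+r}{2})$, whence $\widehat{\om}(r)\asymp\frac{\Psi(r)}{1-r}\lesssim\frac{\Psi(\frac{1+r}{2})}{1-r}\asymp\widehat{\om}(\frac{1+r}{2})$, i.e.~(i). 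An analogous logarithmic-derivative passage recovers~(i) from~(iv). Away from this point the proof is routine bookkeeping of the constants $\b_0,\g_0$, which is why the statement records only their existence.
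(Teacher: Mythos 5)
The paper gives no proof to compare against: it declares this lemma ``straightforward'' and omits the argument, so your proposal stands or falls on its own. Its architecture is right, and most of it is correct: the iteration for (i)$\Leftrightarrow$(ii), the integration by parts ($\om(s)\,ds=-d\widehat{\om}(s)$) for (ii)$\Rightarrow$(iii), the localization to $[r,\tfrac{1+r}{2}]$ for (iii)$\Rightarrow$(i), the splitting at $1-\tfrac1x$ with $s^x\le e^{-x(1-s)}\lesssim_\g\bigl(\tfrac{1-t}{1-s}\bigr)^\g$ for (iii)$\Rightarrow$(iv), and the identity $\om^\star(z)=\int_{|z|}^1\bigl(\log\tfrac{s}{|z|}+1\bigr)\widehat{\om}(s)\,ds$ together with the logarithmic-derivative integration recovering (i) from $\om^\star(z)\asymp\widehat{\om}(z)(1-|z|)$ are all sound (the last one is the nicest part of your write-up). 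Two slips are cosmetic: in (i)$\Rightarrow$(ii) you need $r_n\ge t$, not $r_n\le t$ (with $r_n\le t$ monotonicity of $\widehat{\om}$ points the wrong way; either take the first $n$ with $r_n\ge t$ or iterate one extra step to $r_{n+1}>t$); and $(1-\tfrac1x)^x$ is \emph{not} bounded away from $0$ at $x=1$ (it vanishes there), so your uniform lower bound in (iv) should be run for $x\ge2$, the range $x\in[1,2]$ being trivial since both sides are comparable to positive constants there.

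The one genuine gap is the implication from the moment condition (iv) back to (i), which you dismiss as ``an analogous logarithmic-derivative passage.'' No such analogy exists: the $\om^\star$ argument worked because $\Psi(r)=\int_r^1\widehat{\om}(s)\,ds$ satisfies the exact identity $\Psi'=-\widehat{\om}$, so your hypothesis literally became a differential inequality for $\Psi$. The moment function $F(x)=\int_0^1 s^x\om(s)\,ds$ has no such relation to $\widehat{\om}$; its logarithmic derivative involves $\int_0^1 s^x\log s\,\om(s)\,ds$, which the hypothesis does not control, and soft properties of $F$ (log-convexity via Cauchy--Schwarz) only give $F(x)^2\le F(0)F(2x)$, much weaker than the bound $F(x)\lesssim F(2x)$ that doubling requires. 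Fortunately the implication is easier than the one you did prove, so the gap closes in two lines: for $x\ge4$,
\begin{equation*}
\widehat{\om}\left(1-\frac{2}{x}\right)\le\left(1-\frac{2}{x}\right)^{-x}\int_{1-\frac{2}{x}}^1 s^x\om(s)\,ds
\le 16\int_0^1 s^x\om(s)\,ds\lesssim\widehat{\om}\left(1-\frac{1}{x}\right),
\end{equation*}
where the last step is the upper half of hypothesis (iv); writing $r=1-\tfrac2x$, so that $1-\tfrac1x=\tfrac{1+r}{2}$, this is exactly $\widehat{\om}(r)\lesssim\widehat{\om}\bigl(\tfrac{1+r}{2}\bigr)$ for $r\ge\tfrac12$, and smaller $r$ are trivial because $\widehat{\om}\bigl(\tfrac{1+r}{2}\bigr)\ge\widehat{\om}\bigl(\tfrac34\bigr)>0$ for any weight not vanishing a.e.\ near the boundary. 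Substituting this for the hand-waved sentence, and making the two cosmetic corrections above, your proof is complete.
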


If $\{e_n\}$ is an orthonormal basis of a Hilbert
space $H$, that is continuously embedded into $\H(\D)$, then its reproducing kernel is given by
    \begin{equation}\label{RKformula}
    K_z(\zeta)=\sum_n e_ n(\zeta)\,\overline{e_ n(z)}
    \end{equation}
for all $z$ and $\zeta$ in $\D$. We denote $\om_x=\int_0^1r^{2x+1}\om(r)\,dr$ for $x>-1$.

\begin{lemma}\label{HL}
Let $\om\in\DD$ and $n\in\N\cup\{0\}$.
\begin{enumerate}
\item[\rm(i)] If $0<p\le2$, then
    $$
   M_p^p\left(r,\left(K^{\omega}_a\right)^{(N)}\right)\gtrsim
     \int_0^{|a|r}\frac{dt}{\widehat{\om}(t)^{p}(1-t)^{p(N-1)}},\quad r,|a|\to1^-.
    $$
  \item[\rm(ii)] If $2\le p<\infty$, then
    $$
    M_p^p\left(r,\left(K^{\omega}_a\right)^{(N)}\right)\lesssim
     \int_0^{|a|r}\frac{dt}{\widehat{\om}(t)^{p}(1-t)^{p(N-1)}},\quad r,|a|\to1^-.
    $$
\end{enumerate}
\end{lemma}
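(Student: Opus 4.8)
The plan is to compute the reproducing kernel $K^\om_a$ explicitly as a power series and then read off the integral means from the classical Hardy--Littlewood inequalities for Taylor coefficients. Since the inner product \eqref{Eq:InnerProduct} makes the monomials pairwise orthogonal in $\Dom$, with $\|1\|_{\Dom}^2=\om(\D)=2\om_0$ and $\|z^n\|_{\Dom}^2=n^2\int_\D|z|^{2(n-1)}\om(z)\,dA(z)=2n^2\om_{n-1}$ for $n\ge1$, formula \eqref{RKformula} gives
\[
K^\om_a(\zeta)=\frac1{2\om_0}+\sum_{n=1}^\infty\frac{(\zeta\overline a)^n}{2n^2\,\om_{n-1}}.
\]
Differentiating $N$ times in $\zeta$ and writing $(K^\om_a)^{(N)}(\zeta)=\sum_m b_m\zeta^m$, the coefficient of $\zeta^m$ is $b_m=\frac{(m+N)!}{m!}\cdot\frac{\overline a^{\,m+N}}{2(m+N)^2\om_{m+N-1}}$. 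Using $\frac{(m+N)!}{m!}\asymp(m+1)^N$ and, by the doubling property via Lemma~\ref{Lemma:replacement-Lemmas-Memoirs}, $\om_{m+N-1}\asymp\om_m$, I obtain $|b_m|\asymp(m+1)^{N-2}\om_m^{-1}|a|^{m+N}$ as $m\to\infty$ (the constants depend on $N$, and the finitely many small-$m$ terms are harmless since the quantities in question tend to infinity as $r,|a|\to1^-$).

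Next I would apply the Hardy--Littlewood inequalities to $g=(K^\om_a)^{(N)}$ at radius $r$, that is, to the dilate $\zeta\mapsto g(r\zeta)$, whose Taylor coefficients are $b_mr^m$ and whose $H^p$-norm equals $M_p(r,g)$. These give exactly the two one-sided bounds claimed: for $0<p\le2$ the lower bound $M_p^p(r,g)\gtrsim\sum_m(m+1)^{p-2}|b_m|^pr^{pm}$, and for $2\le p<\infty$ the upper bound $M_p^p(r,g)\lesssim\sum_m(m+1)^{p-2}|b_m|^pr^{pm}$. Both directions hold for arbitrary coefficients, so no reduction to positive coefficients is needed. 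Inserting the asymptotics for $b_m$ turns the right-hand side into $\sum_m(m+1)^{p(N-1)-2}\om_m^{-p}(|a|r)^{pm}$, up to constants independent of $r$ and $a$.

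It then remains to identify this series with the target integral, and this passage is the main technical point. Writing $\rho=|a|r$, the factor $\rho^{pm}$ acts as a smooth cut-off at $m\asymp(1-\rho)^{-1}$ and is $\asymp1$ for smaller $m$, so that $\sum_m(m+1)^{p(N-1)-2}\om_m^{-p}\rho^{pm}\asymp\sum_{1\le m\lesssim(1-\rho)^{-1}}(m+1)^{p(N-1)-2}\om_m^{-p}$. By Lemma~\ref{Lemma:replacement-Lemmas-Memoirs}(iv), $\om_m\asymp\widehat{\om}(1-\tfrac1m)$, and the doubling of $\om$ guarantees that $t\mapsto\widehat{\om}(t)^{-p}(1-t)^{-p(N-1)}$ is regular enough to compare this sum with its integral; the change of variables $t=1-\tfrac1x$ then yields
\[
\sum_{1\le m\lesssim(1-\rho)^{-1}}(m+1)^{p(N-1)-2}\om_m^{-p}\asymp\int_0^{\rho}\frac{dt}{\widehat{\om}(t)^p(1-t)^{p(N-1)}}.
\]

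The hard part is precisely this last step: one must justify the Riemann-sum comparison and the replacement of the geometric cut-off $\rho^{pm}$ by a sharp cut-off, uniformly as $r,|a|\to1^-$, and it is here that membership in $\DD$ (rather than in the class of arbitrary radial weights) enters, through the regularity estimates of Lemma~\ref{Lemma:replacement-Lemmas-Memoirs}. Combining the three steps gives (i) and (ii); the case $N=0$ is identical, the only difference being that $p(N-1)=-p$ makes the integrand bounded and the integral convergent up to the boundary. Alternatively, should one wish to avoid quoting Hardy--Littlewood, the two mean inequalities can be proved in place by a dyadic Littlewood--Paley block decomposition, but the computation of $b_m$ and the series-to-integral conversion remain the substance of the argument either way.
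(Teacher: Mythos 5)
Your proposal is correct and follows essentially the same route as the paper's proof: the explicit kernel expansion \eqref{kernelformula} obtained from the orthonormal basis of $\Dom$, the two one-sided Hardy--Littlewood coefficient inequalities applied to the dilate $\left(K^{\omega}_a\right)^{(N)}(rz)$, and the reduction to the series--integral comparison \eqref{series}. The step you flag as the hard part is exactly where the paper spends its effort, and it is settled just as you anticipate via Lemma~\ref{Lemma:replacement-Lemmas-Memoirs}: the sum is split at $N^\star\asymp(1-r)^{-1}$, the head is compared with the integral using $\om_n\asymp\widehat{\om}\left(1-\frac{1}{2n+1}\right)$, and the geometric tail is absorbed using \eqref{bi} together with the fact that $\widehat{\om}(r)^p(1-r)^{-M}$ is essentially increasing for some $M=M(p,\om)>p+1$.
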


\begin{proof}
Write $\omega_\b(z)=(1-|z|)^\b\omega(z)$ for all
$\b\in\mathbb{R}$ and $z\in\D$. By using the standard orthonormal basis
$\{\left(\om(\D)\right)^{-\frac{1}{2}}\}\cup\{z^{n+1}/((n+1)\sqrt{2\om_{n}})\}$, $n\in \N\cup\{0\}$, of
$\Dom$ and \eqref{RKformula} we obtain
    \begin{equation}\label{kernelformula}
    K^{\omega}_a(z)=\om(\D)^{-1}+\sum_{n=0}^\infty\frac{z^{n+1}\overline{a}^{n+1}}{2(n+1)^2\om_n},
    \end{equation}
which implies
    \begin{equation*}
    \left(K^{\omega}_a\right)^{(N)}(z)=\sum_{n= N-1}^\infty
    \frac{n(n-1)\cdots(n-N+2)z^{n-N+1}\overline{a}^{n+1}}{2(n+1) \om_n},\quad N\in\N.
    \end{equation*}
Therefore the classical Hardy-Littlewood inequalities \cite[Theorem~6.2]{Duren1970} applied to $\left(K^{\omega}_a\right)^{(N)}(rz)$ show that it suffices to prove
    \begin{equation}\label{series}
    \sum_{n=N}^\infty\frac{r^{pn}}{(n+1)^{p(1-N)+2}\om_n^{p}}\asymp \int_0^{r}\frac{dt}{\widehat{\om}_{N-1}(t)^{p}},
    \quad r \to 1^-.
    \end{equation}
Assume, without loss of generality, that $r>1-\frac1{N+1}$. Choose now $N^\star\in\N$ such that $1-\frac{1}{N^\star}\le r<1-\frac{1}{N^\star+1}$. Then Lemma~\ref{Lemma:replacement-Lemmas-Memoirs} yields
    \begin{equation*}
    \begin{split}
    \sum_{n=N}^{N^\star}\frac{r^{pn}}{(n+1)^{p(1-N)+2}\om_n^{p}}
    &\asymp\sum_{n=N}^{N^\star}\frac{1}{(n+1)^{p(1-N)+2}\widehat{\om}(1-\frac{1}{2n+1})^{p}}
    \gtrsim
    \int_{N+1}^{\frac{1}{1-r}}\frac{ds}{s^{p(1-N)+2}\widehat{\om}(1-\frac{1}{s})^{p}}\\
    &=\int_{1-\frac{1}{N+1}}^{r}\frac{dt}{\widehat{\om}(t)^{p}(1-t)^{p(N-1)}}
    \asymp \int_{0}^{r}\frac{dt}{\widehat{\om}(t)^{p}(1-t)^{p(N-1)}},\quad r \to 1^-.
    \end{split}
    \end{equation*}
Since Lemma~\ref{Lemma:replacement-Lemmas-Memoirs} allows us to establish the same upper bound in a similar manner, we deduce
  \begin{equation*}
    \begin{split}\sum_{n=N}^{N^\star}\frac{r^{pn}}{(n+1)^{p(1-N)+2}\om_n^{p}}
   \asymp \int_{0}^{r}\frac{dt}{\widehat{\om}(t)^{p}(1-t)^{p(N-1)}},\quad r \to 1^-.
    \end{split}
    \end{equation*}
Lemma~\ref{Lemma:replacement-Lemmas-Memoirs} also implies
    \begin{equation}\label{bi}
    \frac{1}{\widehat{\om}(r)^{p}(1-r)^{p(N-1)-1}}
    \asymp\int_{\frac{4r-1}{3}}^r  \frac{dt}{\widehat{\om}(t)^{p}(1-t)^{p(N-1)}},\quad r \to 1^-,
    \end{equation}
and the existence of a constant $M=M(p,\om)>p+1$ such that $\frac{\widehat{\om}(r)^p}{(1-r)^M}$ is essentially increasing. Hence
    \begin{equation*}
    \begin{split}
    \sum_{n=N^\star}^\infty\frac{r^{pn}}{(n+1)^{p(1-N)+2}\om_n^{p}}
    &\lesssim \frac{1}{(N^\star+1)^M \widehat{\om}(1-\frac{1}{N^\star})^{p}}\sum_{n=N^\star}^\infty (n+1)^{p(N-1)-2+M}r^{np}\\
    &\asymp\frac{1}{(1-r)^{p(N-1)-1}\widehat{\om}(r)^{p}}
    \lesssim \int_{0}^{r}\frac{dt}{\widehat{\om}(t)^{p}(1-t)^{p(N-1)}},\quad r \to 1^-,
    \end{split}
    \end{equation*}
and the proof is complete.
\end{proof}

\medskip

With these preparations we are ready to establish Theorem~\ref{th:kernelstimate} by using a method employed in~\cite{PelRatproj}. Unfortunately, the Bergman kernel estimate \cite[Theorem~1]{PelRatproj} does not yield the result we wish to obtain, so we will follow its proof. In order to avoid unnecessary repetition of details, we will only indicate the main differences in the reasoning and omit the rest of the proof.

A weight $\om\in\DD$ is called regular if it satisfies \eqref{pikkupillu}. It is clear that $\om\in\R$ if and only if for each $s\in[0,1)$ there exists a constant $C=C(s,\omega)>1$ such that
    \begin{equation}\label{eq:r2}
    C^{-1}\om(t)\le \om(r)\le C\om(t),\quad 0\le r\le t\le
    r+s(1-r)<1,
    \end{equation}
and 
    \begin{equation*}
    \frac{\int_r^1\om(s)\,ds}{1-r}\lesssim \om(r),\quad0\le r<1.
    \end{equation*}

\medskip

\noindent\emph{Proof of }Theorem~\ref{th:kernelstimate}. We will first prove~(ii) for a continuous~$v\in\R$. Throughout the proof we may assume, without loss of generality, that $\int_0^1 \om(s)\,ds=1$. We recall the Littlewood-Paley formula
    \begin{equation*}
    \|f\|_{A^p_v}^p\asymp\int_{\D}|f^{(n)}(z)|^p(1-|z|)^{np}v(z)\,dA(z)+\sum_{j=0}^{n-1}|f^{(j)}(0)|^p,\quad f\in \H(\D),
    \end{equation*}
valid for regular continuous weights $v$, $0<p<\infty$ and $n\in\N$~\cite{PavP}. This implies
    \begin{equation*}
    \begin{split}
    \|(K^\om_a)^{(N)}\|_{A^p_v}^p&\asymp\int_{\D}|(K^\om_a)^{(N+2)}(z)|^p(1-|z|)^{2p}v(z)\,dA(z)
    +|(K^\om_a)^{(N)}(0)|^p+|(K^\om_a)^{(N+1)}(0)|^p,
    \end{split}
    \end{equation*}
where
    \begin{equation*}%\label{eq:N+2}
    \left(K^{\omega}_a\right)^{(N+2)}(z)=\sum_{j= N+1}^\infty
    \frac{j(j-1)\cdots(j-N)z^{j-N-1}\overline{a}^{j+1}}{2(j+1)\om_j},\quad n\in\N,
    \end{equation*}
by \eqref{kernelformula}. Now, bearing in mind Lemma~\ref{HL}, and replacing $\left(B^\om_a\right)^{(N)}$
by $\left(K^{\omega}_a\right)^{(N+2)}$ in the proof of the case $v\in\R$ continuous in \cite[Theorem~1]{PelRatproj}, we get
    \begin{equation*}
    \begin{split}
    \|(K^\om_a)^{(N)}\|_{A^p_v}^p &\asymp  \|(K^\om_a)^{(N+2)}\|_{A^p_{v_{2p}}}^p
    \asymp\int_0^{|a|}\frac{\widehat{v_{2p}}(t)}{\widehat{\om}(t)^{p}(1-t)^{p(N+1)}}\,dt
    \asymp\int_0^{|a|}\frac{\widehat{v}(t)}{\widehat{\om}(t)^{p}(1-t)^{p(N-1)}}\,dt
    \end{split}
    \end{equation*}
as $|a|\to1^-$, and thus~(ii) for a continuous~$v\in\R$ is proved.

One readily obtains the assertion~(i) by following the lines of the corresponding part of the proof of \cite[Theorem~1]{PelRatproj} with Lemma~\ref{HL} and the case that we just proved in hand.

The case~(ii) for~$v\in\DD$ as well as the asymptotic inequality $\lesssim$ in \eqref{k1} for any radial
weight $v$ can also be proved by following the lines of the proof of \cite[Theorem~1]{PelRatproj}. We omit the details.
\hfill$\Box$

\medskip

\noindent\emph{Proof of }Corollary~\ref{co:ernelstimaten}. The equivalence between the asymptotic equality (i) and \eqref{Eq:hypothesis-kernelmeans} follows by Theorem~\ref{th:kernelstimate}(i) and \eqref{bi}. Moreover, \eqref{kn4} is equivalent to \eqref{Eq:hypothesis-kernel} by Theorem~\ref{th:kernelstimate}(ii) and the estimate
    \begin{equation*}
    \begin{split}
    \frac {\widehat{v}\left(a\right)}{\widehat{\om}(a)^{p}(1-|a|)^{(N-1)p-1}}&\asymp
    \int_{2|a|-1}^{|a|}\frac{\widehat{v}\left(s\right)}{\widehat{\om}(s)^p(1-s)^{(N-1)p}}\,ds
    \le \int_{0}^{|a|}\frac{\widehat{v}\left(s\right)}{\widehat{\om}(s)^p(1-s)^{(N-1)p}}\,ds,
    \end{split}
    \end{equation*}
that follows by Lemma~\ref{Lemma:replacement-Lemmas-Memoirs}.\hfill$\Box$

\section{Schatten class Toeplitz operators on Dirichlet spaces}\label{Section:Schatten-Toeplitz}

For $-\infty<\alpha<2$ and $\om\in\DD$, the Hilbert space $H_\alpha(\om^\star)$ consists of
$f\in\H(\D)$ such that
    $$
    \int_\D |f'(z)|^2\,\om^\star_{-\alpha}(z)\,dA(z)<\infty.
    $$
By \eqref{LP1}, we deduce the identity $H_0(\om^\star)=A^2_\omega$. For a complex Borel measure $\mu$ on $\D$,
the Toeplitz operator $T_\mu$ associated with $H_\alpha(\om^\star)$ is defined by
    \begin{equation*}
    T_\mu(f)(z)=\int_\D f(\z)K^{\alpha,\omega}(z,\z)\,d\mu(\z),\quad f\in
    H_\alpha(\om^\star),
    \end{equation*}
where $K^{\alpha,\omega}(z,\z)=K_\z^{\alpha,\omega}(z)$ are the corresponding reproducing kernels. Let now $\om$ be a radial weight and $\mu$ a $1$-Carleson measure for $A^2_\om$. By applying \eqref{RKformula} for the standard orthonormal bases and \eqref{LP1} for $f(z)=z^n$ we deduce
    \begin{equation*}
    T_\mu(f)(z)=4\mathcal{T}_\mu(f)+\left(\frac{1}{\om^\star(\D)}-\frac{4}{\om(\D)} \right)\|f\|_{L^1(\mu)},\quad f\in A^2_{\om}.
    \end{equation*}
Therefore $\mathcal{T}_\mu\in\SSS(A^2_\om)$ if and only if $T_\mu\in\SSS(H_0(\om^\star))$, and thus the following result contains Theorem~\ref{mainToeplitzbergman} as a special case. In the statement we use the notation
    $$
    \widehat{\mu}_{\a,r}(z)=\frac{\mu\left(\Delta(z,r)\right)}{\om_{-\a}^\star(z)},\quad z\in\D\setminus\{0\}.
    $$
\begin{theorem}\label{mainToeplitz}
Let $0<p<\infty$ and $-\infty<\a<1$ such that $p\a<1$. Let $\om\in\DD$ and $\mu$ be a positive Borel measure on~$\D$. Then the following assertions are equivalent:
\begin{enumerate}
\item[\rm (a)] $T_\mu\in\SSS_p(H_\alpha(\om^\star))$;
\item[\rm (b)]  $\sum_{R_j\in\Upsilon}
    \left(\frac{\mu(R_j)}{\omega^\star_{-\alpha}(z_j)}\right)^p<\infty$;
\item[\rm (c)]  $\widehat{\mu}_{\a,r}\in L^p\left(\frac{1}{(1-|z|)^2}\right)$ for some $0<r<1$.
\end{enumerate}
\end{theorem}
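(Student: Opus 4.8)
The plan is to regard $T_\mu$ as a positive operator and to adapt Luecking's scheme, treating $p\ge 1$ and $p<1$ separately and isolating the necessity of~(b) when $p<1$ as the genuine difficulty. First I would record the quantitative facts that feed every later step. Since $\langle T_\mu f,f\rangle_{H_\a(\om^\star)}=\int_\D|f|^2\,d\mu\ge 0$ by the reproducing property, $T_\mu$ is positive, so its Schatten membership is decided by its eigenvalues. Writing $\nu=\om^\star_{-\a}$, Lemma~\ref{Lemma:replacement-Lemmas-Memoirs} shows $\nu\in\DD$ when $\a<1$, so that $H_\a(\om^\star)=D_{\nu}$ is a Dirichlet-type space whose kernel $K^{\a,\om}_a$ is governed by Theorem~\ref{th:kernelstimate} and Corollary~\ref{co:ernelstimaten}. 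These yield three inputs I will use repeatedly: the diagonal estimate $\nm{K^{\a,\om}_a}^2=K^{\a,\om}(a,a)\asymp 1/\om^\star_{-\a}(a)$; the essential constancy of $K^{\a,\om}_a$ on each $R_j$ and on each pseudohyperbolic disc; and the off-diagonal decay of $\langle k_{z_i},k_{z_j}\rangle$ in the pseudohyperbolic distance, where $k_a=K^{\a,\om}_a/\nm{K^{\a,\om}_a}$. Combining the first two, the local pieces satisfy $\langle T_{\mu|_{R_j}}k_{z_j},k_{z_j}\rangle=\int_{R_j}|k_{z_j}|^2\,d\mu\asymp\mu(R_j)/\om^\star_{-\a}(z_j)$, which is the bridge between the operator and the sum in~(b).

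The equivalence of~(b) and~(c) is the soft part: since $\om^\star_{-\a}\in\DD$ is essentially constant on the families $\{R_j\}$ and $\{\Delta(z,r)\}$, and these two families are comparable coverings of $\D$ with bounded overlap, I would compare $\mu(\Delta(z,r))$ with $\sum_{R_j\cap\Delta(z,r)\neq\emptyset}\mu(R_j)$ and integrate against $dA(z)/(1-|z|)^2$, which discretizes to counting measure on the rectangles. For $p\ge 1$ I would follow and extend the proof of \cite[Theorem~6.11]{PelRat} to the class $\DD$. Necessity is clean here: $\{k_{z_j}\}$ is a Bessel sequence in $H_\a(\om^\star)$ (a Carleson-type estimate for $\sum_j\om^\star_{-\a}(z_j)\delta_{z_j}$), so after splitting the indices into finitely many pseudohyperbolically separated subfamilies on which the kernels are nearly orthonormal, the diagonal domination valid for positive operators when $p\ge 1$ gives $\sum_j\langle T_\mu k_{z_j},k_{z_j}\rangle^p\lesssim\nm{T_\mu}_{\SSS_p}^p$, whence~(b); sufficiency follows from the kernel estimates and the triangle inequality in $\SSS_p$.

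For $p<1$ the sufficiency of~(b) uses the quasi-triangle inequality $\nm{\sum_j A_j}_{\SSS_p}^p\le\sum_j\nm{A_j}_{\SSS_p}^p$: writing $T_\mu=\sum_j T_{\mu|_{R_j}}$, the essential constancy of the kernel on $R_j$ makes each summand a positive, essentially rank-one operator with $\nm{T_{\mu|_{R_j}}}_{\SSS_p}\asymp\nm{T_{\mu|_{R_j}}}\asymp\mu(R_j)/\om^\star_{-\a}(z_j)$, so summing $p$-th powers gives~(a).

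The heart of the matter, and the step I expect to be the main obstacle, is the necessity of~(b) when $p<1$. Here the diagonal criterion is unavailable, since for $p<1$ the sum $\sum_j\langle T_\mu k_{z_j},k_{z_j}\rangle^p$ is not controlled by $\nm{T_\mu}_{\SSS_p}^p$ in general (a spread-out rank-one operator already violates this), so one cannot merely test against the kernels. Following Luecking, I would realize $T_\mu$ through the embedding $J\colon H_\a(\om^\star)\to L^2(\mu)$, so that $T_\mu=J^*J$ and $JJ^*$ is the integral operator on $L^2(\mu)$ with kernel $K^{\a,\om}(\z,w)$; then decompose $L^2(\mu)=\bigoplus_j L^2(\mu|_{R_j})$ and compare $JJ^*$ with its block-diagonal part $D=\bigoplus_j D_j$, whose Schatten quasi-norm is exactly $\nm{D}_{\SSS_p}^p=\sum_j\nm{D_j}_{\SSS_p}^p\asymp\sum_j(\mu(R_j)/\om^\star_{-\a}(z_j))^p$. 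The crux is to pass from $JJ^*\in\SSS_p$ to $D\in\SSS_p$ for $p<1$, where pinching to the block diagonal fails to be contractive. I would handle this by a random-sign (Rademacher) averaging with Khinchine's inequality to separate diagonal from off-diagonal, and then use the off-diagonal kernel decay from Corollary~\ref{co:ernelstimaten}, summed over the separated subfamilies, to absorb the off-diagonal blocks in the $\SSS_p$ quasi-norm. Controlling these off-diagonal contributions for the whole class $\DD$, where no closed form for $K^{\a,\om}$ is available, is precisely the technical difficulty that the estimates of Section~\ref{Section:Schatten-Toeplitz} and the kernel bounds of Theorem~\ref{th:kernelstimate} are designed to overcome.
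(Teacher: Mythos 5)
Your skeleton is the right one (it is Luecking's scheme, which the paper also follows): positivity of $T_\mu$, the equivalence (b)$\Leftrightarrow$(c) from local constancy of $\om^\star_{-\a}$, the case $p\ge1$ by extending \cite[Theorem~6.11]{PelRat} to $\DD$, and for $p<1$ a diagonal-versus-off-diagonal comparison over pseudohyperbolically separated subfamilies. The gap is in the mechanism you propose for the heart of the matter, necessity when $p<1$. Rademacher averaging with Khinchine's inequality cannot extract the block diagonal in $\SSS_p$ for $p<1$: writing the pinching as an average over the $2^N$ sign patterns, $D=2^{-N}\sum_\e U_\e A U_\e$, the only available tool is the $p$-triangle inequality, which gives $\nm{D}_{\SSS_p}^p\le 2^{N(1-p)}\nm{A}_{\SSS_p}^p$, a bound that blows up with the number of blocks; and a noncommutative Khinchine inequality in $\SSS_p$ for $p<1$ is not an elementary available tool. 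What the paper does instead (proof of Theorem~\ref{th:tmuextended}) avoids averaging entirely: it restricts $\mu$ to the separated subfamilies, which is legitimate because eigenvalues of positive operators are monotone, so $|T_{\mu_{j,l}}|_p\le|T_\mu|_p$ \cite[p.~359]{Lu87}; it then writes $J^\star T_\nu J=D+E$ with $J$ the lattice operator of Lemma~\ref{le:wo}, bounds $D$ from below by $\sum_k\bigl(\nu(R(b_k))/\om^\star_{-\a}(b_k)\bigr)^p$ using the essential constancy of kernel derivatives (Lemma~\ref{b6}), and bounds $E$ from above by the entrywise sum $\sum_{n\ne k}|\langle T_\nu h_k,h_n\rangle|^p$ --- an inequality valid precisely because $p\le1$ \cite[Proposition~1.29]{Zhu} --- which is then shown to be $\op\bigl((\om^\star_{-\a}(b_j))^{-p}\bigr)$ as the separation parameter $\d\to0$, so that $E$ can be absorbed into $D$.

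Two of the inputs you take for granted are also not available, and they infect both directions of your $p<1$ argument. First, Theorem~\ref{th:kernelstimate} and Corollary~\ref{co:ernelstimaten} give \emph{integrated} $L^p_v$-norms of kernel derivatives, not pointwise off-diagonal decay of $\langle k_{z_i},k_{z_j}\rangle$ in the pseudohyperbolic distance; for general $\om\in\DD$ no such pointwise decay is proved anywhere in the paper, and this is exactly the obstruction the authors emphasize (no closed formula, possible zeros of the kernel). The paper's substitute is subharmonicity plus Fubini: the off-diagonal sum is dominated by an integral of $\bigl|\partial^N K^{\a,\om}/\partial z^N\bigr|^p$ over $G=\{(u,v):\rho(u,v)>1-\d\}$, which is small because the corresponding integral over all of $\D$ converges uniformly by Corollary~\ref{co:kernelstimate}. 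Second, your sufficiency step --- that essential constancy of the kernel on $R_j$ makes each $T_{\mu|_{R_j}}$ ``essentially rank-one'' with $\nm{T_{\mu|_{R_j}}}_{\SSS_p}\asymp\mu(R_j)/\om^\star_{-\a}(z_j)$ --- is unjustified for $p<1$: constancy of $|K_\z(z)|$ on a small disc does not make the vectors $\{K_\z:\z\in R_j\}$ nearly parallel in the Hilbert space, the block operator has no a priori rank bound, and for $p<1$ an error that is small in operator norm can be arbitrarily large in $\SSS_p$. The paper proves sufficiency instead by reducing, via the bounded onto operator $J$ of Lemma~\ref{le:wo} and the same entrywise criterion, to the bound $\sum_{j,k}|\langle T_\mu h_j,h_k\rangle|^p\lesssim\sum_j\bigl(|\mu|(R_j)/\om^\star_{-\a}(z_j)\bigr)^p$, whose key ingredient is the pointwise estimate \eqref{eq:tmu2}, again obtained from subharmonicity and Corollary~\ref{co:kernelstimate}.
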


It is worth noticing that a straightforward calculation based on the pseudohyperbolic distance and the properties of regular weights show that condition (b) in Theorem~\ref{mainToeplitz} can be replaced by
    $\sum_{j}
    \left(\frac{\mu\left(\Delta(a_j,r)\right)}{\omega^\star_{-\alpha}(a_j)}\right)^p<\infty$,
where $\{a_j\}$ is a $\delta$-lattice. Recall that $A=\{a_k\}_{k=0}^\infty\subset\D$ is
uniformly discrete if it
is separated in the hyperbolic metric, it is an $\e$-net if $\D=\bigcup_{k=0}^\infty \Delta(a_k,\e)$, and
finally, it is a
$\delta$-lattice if it is a $5\delta$-net and uniformly
discrete with constant $\gamma=\delta/5$.

We begin with recalling the natural connection between the Toeplitz operator $T_\mu:H_\alpha(\om^\star)\to H_\alpha(\om^\star)$ and the identity operator $I_d$ acting from $H_\alpha(\om^\star)$ to the space $L^2(\mu)$.
Namely, the definition~\eqref{Eq:InnerProduct}, Fubini's
theorem and \eqref{eq:repro} give
    \begin{equation}\label{eq:st12}
    \begin{split}
    \langle T_\mu(g),f\rangle_{H_\alpha(\om^\star)}
    =\langle g,f\rangle_{L^2(\mu)}
    \end{split}
    \end{equation}
for all $g$ and $f$ in $H_\alpha(\om^\star)$, and therefore $T_\mu$ is bounded (resp.
compact) on $H_\alpha(\om^\star)$ if and only if
$I_d:\,H_\alpha(\om^\star)\to L^2(\mu)$ is bounded (resp.
compact).

The main effort on the way to Theorem~\ref{mainToeplitz} consists on proving the following result.

\begin{theorem}\label{th:tmuextended}
Let $0<p<\infty$ and $-\infty<\alpha<1$ such that $p\alpha<1$.
Let $\omega\in\DD$, and let $\mu$ be a complex Borel measure
on $\D$. If
    \begin{equation}\label{eq:slext}
    \sum_{R_j\in\Upsilon}
    \left(\frac{|\mu|(R_j)}{\omega^\star_{-\alpha}(z_j)}\right)^p<\infty,
    \end{equation}
then $T_\mu\in\SSS_p(H_\alpha(\om^\star))$, and there exists a
constant $C>0$ such that
\begin{equation*}
|T_\mu|_p^p\le C \sum_{R_j\in\Upsilon}
    \left(\frac{|\mu|(R_j)}{\omega^\star_{-\alpha}(z_j)}\right)^p.
    \end{equation*}
Conversely, if $\mu$ is a positive Borel measure on $\D$ and
$T_\mu\in\SSS_p(H_\alpha(\om^\star))$, then \eqref{eq:slext} is
satisfied.
\end{theorem}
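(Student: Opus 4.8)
The plan is to build everything on the factorization coming from \eqref{eq:st12}. For a positive measure one has $T_\mu=I_d^\ast I_d$ with $I_d\colon H_\a(\om^\star)\to L^2(\mu)$, so $T_\mu\ge0$, $T_\mu\in\SSS_p$ exactly when $I_d\in\SSS_{2p}$, and $\langle T_\mu g,g\rangle=\|g\|_{L^2(\mu)}^2$ for every $g$. A complex $\mu$ is reduced to the positive case by splitting it into its four Jordan pieces, each dominated by $|\mu|$, so that all four resulting measures obey the hypothesis \eqref{eq:slext} with the same right-hand side. The first thing I would set up is the dictionary between condition \eqref{eq:slext} and the operator: writing $k_{z_j}=K^{\a,\om}_{z_j}/\|K^{\a,\om}_{z_j}\|_{H_\a(\om^\star)}$ for the normalized reproducing kernels, Theorem~\ref{th:kernelstimate} and Corollary~\ref{co:ernelstimaten} (applied with $N=0$) together with Lemma~\ref{Lemma:replacement-Lemmas-Memoirs} give $\|K^{\a,\om}_{z_j}\|^2\asymp 1/\om^\star_{-\a}(z_j)$ and the comparison of the localized masses $\mu(R_j)/\om^\star_{-\a}(z_j)$ with the averaged quantities $\int_{R_j}|k_{z_j}|^2\,d\mu$ attached to $I_d$. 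Here the hypotheses $\a<1$ and $p\a<1$ are what keep $\om^\star_{-\a}$ and the auxiliary weights within reach of the estimates of Section~\ref{Section:Schatten-Toeplitz}. Crucially, since the kernels may vanish inside $R_j$, I would replace any pointwise ``essentially constant'' statement by these $L^p$/averaged estimates, which is how the zero problem flagged in the introduction is circumvented.

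For the sufficiency I would use the disjointness of the family $\{R_j\}$: $\mu=\sum_j \mu|_{R_j}$ yields $T_\mu=\sum_j T_{\mu_j}$, and I would first prove the local bound $|T_{\mu_j}|_p\lesssim \mu(R_j)/\om^\star_{-\a}(z_j)$. The point is that $\mu_j$ sits in a hyperbolically bounded region, so after expanding $f$ around $z_j$ and controlling higher Taylor coefficients by the kernel-derivative estimates of Theorem~\ref{th:kernelstimate}, $T_{\mu_j}$ is within controlled $\SSS_p$-distance of a rank-one operator of size $\mu(R_j)/\om^\star_{-\a}(z_j)$, without invoking non-vanishing of the kernel. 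Summing then splits by range of $p$: for $0<p\le1$ I would use the quasi-triangle inequality $|\sum_j T_{\mu_j}|_p^p\le\sum_j|T_{\mu_j}|_p^p$, which immediately produces the quantitative estimate $|T_\mu|_p^p\le C\sum_j(\mu(R_j)/\om^\star_{-\a}(z_j))^p$; for $p\ge1$ the naive triangle inequality is too lossy, and I would instead extend the almost-orthogonality argument of \cite[Theorem~6.11]{PelRat} to the class $\DD$, exploiting the geometric decay of $\langle k_{z_j},k_{z_k}\rangle$ in the hyperbolic distance to recombine the blocks.

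The necessity for $p\ge1$ is the soft case. Colour the lattice $\{z_j\}$ into finitely many subfamilies, each separated enough that the corresponding normalized kernels form a Riesz sequence; from each such subfamily extract an orthonormal system $\{e_j\}$, and use the majorization inequality for positive operators, namely $\sum_j\langle T_\mu e_j,e_j\rangle^p\le|T_\mu|_p^p$ (valid because $t\mapsto t^p$ is convex for $p\ge1$, via Jensen and Bessel). Combining this with the dictionary $\langle T_\mu k_{z_j},k_{z_j}\rangle\gtrsim \mu(R_j)/\om^\star_{-\a}(z_j)$ and summing over the finitely many subfamilies yields \eqref{eq:slext}.

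The case $p<1$ of the necessity is the heart of the matter and the step I expect to be the main obstacle. Concavity of $t\mapsto t^p$ reverses the majorization inequality, so $\sum_j\langle T_\mu k_{z_j},k_{z_j}\rangle^p$ is \emph{not} controlled by $|T_\mu|_p^p$; this is precisely the failure of the diagonal condition $\sum_n|\langle T e_n,e_n\rangle|^p<\infty$ to detect $\SSS_p$ noted in the introduction. To get around it I would follow Luecking's randomization device: again colour the lattice into finitely many separated subfamilies on which the $\{k_{z_j}\}$ form Riesz sequences, test $T_\mu$ with random sign superpositions $\sum_j\varepsilon_j c_j k_{z_j}$, and apply Khinchine's inequality to the average over $\varepsilon$. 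This converts the $\SSS_p$ quasinorm into a genuine lower bound by the $\ell^p$ sum of the diagonal masses, the off-diagonal contributions being absorbed using the geometric decay of $\langle k_{z_j},k_{z_k}\rangle$ supplied by the $L^p$-kernel estimates of Theorem~\ref{th:kernelstimate}. Feeding in the comparison $\int_{R_j}|k_{z_j}|^2\,d\mu\asymp\mu(R_j)/\om^\star_{-\a}(z_j)$ then gives $\sum_j(\mu(R_j)/\om^\star_{-\a}(z_j))^p\lesssim|T_\mu|_p^p$, completing \eqref{eq:slext}. The delicate points I would expect to fight with are the uniform Riesz bounds for the subfamilies and the summability of the off-diagonal terms, both of which rely on the sharp kernel estimates of Section~\ref{Section:Schatten-Toeplitz} and on the constraint $p\a<1$.
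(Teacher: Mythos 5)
Your plan for the crux of the theorem, the necessity of \eqref{eq:slext} when $0<p<1$, rests on a mechanism that does not exist. Averaging over random signs (equivalently, writing the diagonal part of a matrix $M$ as $D=\mathbb{E}_\varepsilon\, U_\varepsilon M U_\varepsilon^\star$ with $U_\varepsilon=\mathrm{diag}(\varepsilon_j)$, and invoking Khinchine) extracts the diagonal only at the price of a convexity argument: it yields $|D|_p\le\mathbb{E}_\varepsilon|U_\varepsilon MU_\varepsilon^\star|_p=|M|_p$ precisely when $p\ge1$. For $p<1$ the conclusion is false, and no Riesz-sequence or off-diagonal-decay hypothesis can repair it: if $T$ is the rank-one projection onto $N^{-1/2}\sum_{n=1}^N e_n$, then $|T|_p=1$ while $\sum_n\langle Te_n,e_n\rangle^p=N^{1-p}\to\infty$; since a Riesz sequence is the image of an orthonormal basis under a bounded invertible map, any bound of the form $\sum_j\langle T k_{z_j},k_{z_j}\rangle^p\lesssim|T|_p^p$, valid for all positive $T\in\SSS_p$ and all uniformly separated kernel families, would in particular hold for orthonormal bases, contradicting this example. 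The same example shows that testing $T_\mu$ on randomized superpositions of kernels only sees trace and operator-norm information, which for $p<1$ cannot control $|T_\mu|_p$ from below. What actually closes the argument --- and what the paper does, following \cite{Lu87} --- is the reverse absorption: one forms $A=J^\star T_{\mu_{j,l}}J$ for the operator $J$ of Lemma~\ref{le:wo} built on a strongly separated sub-lattice, so that $|A|_p\lesssim|T_\mu|_p$, writes $A=D+E$ with $D$ diagonal in the reference basis, proves the lower bound $|D|_p^p\gtrsim\sum_k\bigl(\mu(R(b_k))/\om^\star_{-\a}(b_k)\bigr)^p$, and then shows \emph{directly} that $|E|_p^p\le\eta\sum_k\bigl(\mu(R(b_k))/\om^\star_{-\a}(b_k)\bigr)^p$ with $\eta=\eta(\d)\to0$ as the separation increases (via subharmonicity, Fubini and Corollary~\ref{co:kernelstimate}); the quasi-triangle inequality $|D|_p^p\le|A|_p^p+|E|_p^p$ then allows the error to be absorbed into the diagonal. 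The point is that one exploits the near-diagonality of this particular matrix, not a testing inequality; one must also run the argument first for compactly supported $\mu$, so that the sum being absorbed is finite, and then remove the truncation.

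Two further repairs. For the diagonal lower bound above (and already for your $p\ge1$ necessity) you need a pointwise lower bound $|K^{\a,\om}_{z_j}(\z)|\gtrsim\|K^{\a,\om}_{z_j}\|^2_{H_\a(\om^\star)}$ for $\z$ in a piece of $R_j$, i.e.\ exactly the ``essentially constant'' statement of Lemma~\ref{b6} that you proposed to avoid; averaged $L^p$ estimates cannot stand in for it, because a positive measure may charge the zero set of the kernel. Your sufficiency argument for $p\le1$ (Taylor expansion of $f$ and of the kernel about $z_j$, giving a series of rank-one operators, then the $p$-triangle inequality) is a genuinely different route from the paper's, which instead verifies the matrix criterion $\sum_{j,k}|\langle T_\mu(h_j),h_k\rangle|^p<\infty$ for the surjection $J$; your route is viable but not as written: the expansions about $z_j$ converge only in the disc of radius $1-|z_j|$, whereas $R_j$ has diameter comparable to $2\pi(1-|z_j|)$, so each $R_j$ must first be subdivided into boundedly many pieces of diameter at most $\rho(1-|z_j|)$ with $\rho<1$, and the geometric summability in $\SSS_p$ of the rank-one series requires estimates for $\|\partial^m_{\bar a}K^{\a,\om}_a\|_{H_\a(\om^\star)}$ uniform in $m$, which follow from Cauchy estimates together with $\|K^{\a,\om}_a\|^2\asymp(\om^\star_{-\a}(a))^{-1}$ and the regularity of $\om^\star_{-\a}$, rather than from Theorem~\ref{th:kernelstimate}, whose constants depend on the order of differentiation.
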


The statement in Theorem~\ref{th:tmuextended} is valid if $p\ge1$ and $\om$ belongs to the subclass $\I\cup\R$ of~$\DD$ by~\cite[Theorem~6.11]{PelRat}.
The argument used there works for the class~$\DD$, and therefore here we only have to deal with the range $0<p<1$, where the proof is actually more involved. Note that in~\cite{PelRat} the space $H_\alpha(\om^\star)$ is defined by the condition
    \begin{equation*}
    \sum_{n=0}^\infty
    (n+1)^{\alpha+2}\om^\star_n|a_{n+1}|^2<\infty,\quad -\infty<\alpha<2,
    \end{equation*}
where $\sum_{n=0}^\infty a_n z^n$ is the Maclaurin series of $f\in\H(\D)$, and this condition is equivalent to our definition of $H_\alpha(\omega^\star)$.

Before proceeding to the auxiliary results needed, we show how Theorem~\ref{mainToeplitz} follows once Theorem~\ref{th:tmuextended} is proved.

\medskip

\noindent\emph{Proof of }Theorem~\ref{mainToeplitz}. The equivalence of (a) and (b) follows by Theorem~\ref{th:tmuextended}. Let now $r\in(0,1)$ be fixed. Divide each polar rectangle $R_j$ into $K^2$ disjoint subrectagles $R_j^k$, $k=1,\ldots,K^2$, of approximately equal size in the hyperbolic sense such that $R_j^k\subset\Delta(z,r)$ for all $z\in
R_j^k$, $k=1,\ldots,K^2$ and $R_j\in\Upsilon$. Then, by Lemma~\ref{le:sc1} below and \eqref{eq:r2},
    \begin{equation*}
    \begin{split}
    \sum_{R_j\in\Upsilon}\left(\frac{\mu(R_j)}{\omega^\star_{-\alpha}(z_j)}\right)^p
    &\asymp\sum_{R_j\in\Upsilon}\sum_{k=1}^{K^2}\left(\frac{\mu(R_j^k)}{\omega^\star_{-\alpha}(z_j)}\right)^p
    \asymp\sum_{R_j\in\Upsilon}\sum_{k=1}^{K^2}\frac{\int_{R_j^k}\left(\mu(R_j^k)\right)^p\frac{dA(z)}{(1-|z|)^2}}{(\omega^\star_{-\alpha}(z_j))^p}\\
    &\lesssim\sum_{R_j\in\Upsilon}\sum_{k=1}^{K^2}\int_{R_j^k}\left(\frac{\mu(\Delta(z,r))}{\omega^\star_{-\alpha}(z)}\right)^p\frac{dA(z)}{(1-|z|)^2}
    \asymp\int_\D\left(\widehat{\mu}_{\a,r}(z)\right)^p\frac{dA(z)}{(1-|z|)^2},
    \end{split}
    \end{equation*}
and hence (c) implies (b) is proved. The fact that (b) implies (c) can be proved by a reasoning similar to that above bearing in mind that each disc $\Delta(z,r)$ intersects at most $N=N(r)\in\N$ squares of the lattice $\Upsilon$. This finishes the proof of the theorem.\hfill$\Box$

\medskip

We will need several technical auxiliary results to prove Theorem~\ref{th:tmuextended}. The first two lemmas concern the regularity of associated weights. With the aid of Lemma~\ref{Lemma:replacement-Lemmas-Memoirs}, the first of them can be proved as~\cite[Lemma~1.7]{PelRat}.

\begin{lemma}\label{le:sc1}
If $0<\alpha<\infty$ and $\om\in\DD$, then
$\omega^\star_{\alpha-2}\in\R$ and
$\left(\omega^\star_{\alpha-2}\right)^\star(z)\asymp\omega^\star_{\alpha}(z)$
for all $|z|\ge\frac12$.
\end{lemma}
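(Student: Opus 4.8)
The plan is to route every estimate through the single pointwise relation $\om^\star(z)\asymp(1-|z|)\widehat\om(z)$ furnished by Lemma~\ref{Lemma:replacement-Lemmas-Memoirs}, so that all comparabilities are expressed in terms of $\widehat\om$. Write $v=\om^\star_{\alpha-2}$, so that $v(z)=(1-|z|)^{\alpha-2}\om^\star(z)$; Lemma~\ref{Lemma:replacement-Lemmas-Memoirs} then gives $v(r)\asymp(1-r)^{\alpha-1}\widehat\om(r)$ for $r\ge\frac12$. Both assertions of the lemma follow once $\widehat v$ and $v^\star$ are controlled by $\widehat\om$.

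First I would show $v\in\R$ by verifying \eqref{pikkupillu}, that is $v(r)\asymp\widehat v(r)/(1-r)$. From the estimate for $v$, $\widehat v(r)=\int_r^1 v(s)\,ds\asymp\int_r^1(1-s)^{\alpha-1}\widehat\om(s)\,ds$. The upper bound $\widehat v(r)\lesssim(1-r)^\alpha\widehat\om(r)$ is immediate from the monotonicity of $\widehat\om$ together with $\int_r^1(1-s)^{\alpha-1}\,ds\asymp(1-r)^\alpha$, which is exactly where the hypothesis $\alpha>0$ is used. For the matching lower bound I would restrict the integral to $[r,\frac{1+r}{2}]$, where $\widehat\om(s)\ge\widehat\om(\frac{1+r}{2})\gtrsim\widehat\om(r)$ by the doubling property $\om\in\DD$, and once more invoke $\int_r^{(1+r)/2}(1-s)^{\alpha-1}\,ds\asymp(1-r)^\alpha$. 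This yields $\widehat v(r)\asymp(1-r)^\alpha\widehat\om(r)\asymp(1-r)\,v(r)$, which is precisely \eqref{pikkupillu}; since \eqref{pikkupillu} also forces the doubling of $\widehat v$, we get $v\in\R$ (equivalently, \eqref{eq:r2} holds).

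For the second assertion I would apply Lemma~\ref{Lemma:replacement-Lemmas-Memoirs} a second time, now to the weight $v$ itself: as $v\in\R\subset\DD$, it gives $v^\star(z)\asymp(1-|z|)\widehat v(z)\asymp(1-|z|)^{\alpha+1}\widehat\om(z)$ for $|z|\ge\frac12$, using the estimate for $\widehat v$ from the previous step. On the other hand, $\om^\star_\alpha(z)=(1-|z|)^\alpha\om^\star(z)\asymp(1-|z|)^{\alpha+1}\widehat\om(z)$ by the same lemma applied to $\om$. Comparing the two expressions gives $\left(\om^\star_{\alpha-2}\right)^\star(z)\asymp\om^\star_\alpha(z)$ for $|z|\ge\frac12$.

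The main obstacle is the lower bound for $\widehat v$: in contrast to the upper bound, it genuinely relies on the doubling hypothesis $\om\in\DD$ (through $\widehat\om(\frac{1+r}{2})\gtrsim\widehat\om(r)$), and this is the only point where membership in $\DD$, rather than mere integrability of $\om$, is essential. A secondary technical point is that the relations coming from Lemma~\ref{Lemma:replacement-Lemmas-Memoirs} are stated asymptotically as $|z|\to1^-$, so to upgrade the comparabilities to the full range $|z|\ge\frac12$ I would combine the boundary behaviour with the continuity and strict positivity of all the weights involved on the compact annuli $\frac12\le|z|\le1-\delta$.
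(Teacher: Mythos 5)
Your proposal is correct and takes essentially the same route as the paper: the paper's proof simply adapts \cite[Lemma~1.7]{PelRat} with the aid of Lemma~\ref{Lemma:replacement-Lemmas-Memoirs}, and that argument is exactly yours. Namely, one reduces everything to $\widehat{\om}$ via $\om^\star(z)\asymp(1-|z|)\widehat{\om}(z)$, establishes $\widehat{\om^\star_{\alpha-2}}(r)\asymp(1-r)^{\alpha}\widehat{\om}(r)$ by combining the monotonicity of $\widehat{\om}$ (upper bound) with the doubling property on the half-interval $\left[r,\frac{1+r}{2}\right]$ (lower bound, the only place $\om\in\DD$ is needed), and then reads off both the regularity of $\om^\star_{\alpha-2}$ and the asymptotic equality $\left(\om^\star_{\alpha-2}\right)^\star(z)\asymp\om^\star_{\alpha}(z)$.
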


With Lemmas~\ref{Lemma:replacement-Lemmas-Memoirs} and~\ref{le:sc1} in hand, one readily obtains the following extension of \cite[Lemma~6.3]{PelRat} from its proof.

\begin{lemma}\label{le:sc2} If $0<\alpha<\infty$, $n\in\N$ and
$\om\in\DD$, then
    \begin{equation*}%\label{4}
    \int_0^1r^n\om_{\alpha-2}^\star(r)\,dr\asymp
    \frac{\om^\star\left(1-\frac{1}{n+1}\right)}{(n+1)^{\alpha-1}},
    \end{equation*}
and
    \begin{equation*}%\label{5}
    (n+1)^{2-\alpha}\om^\star_n\asymp\int_0^1r^{2n+1}\om_{\alpha-2}^\star(r)\,dr.
    \end{equation*}
\end{lemma}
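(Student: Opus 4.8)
The two asymptotic equalities are both instances of a single moment estimate, so the plan is to isolate that estimate and then specialize. Concretely, for $0<\beta<\infty$ and a real exponent $x\to\infty$ I would prove
\begin{equation*}
\int_0^1 r^x\,\om^\star_{\beta-2}(r)\,dr\asymp\frac{\om^\star\!\left(1-\tfrac1x\right)}{x^{\beta-1}}.
\end{equation*}
Granting this, the first claim is the case $\beta=\alpha$, $x=n$, after replacing $1-\tfrac1n$ by $1-\tfrac1{n+1}$ and $n$ by $n+1$, which costs only bounded factors because $\om^\star\in\R\subset\DD$ is doubling and $n\asymp n+1$. For the second claim I would apply the estimate twice with $x=2n+1$: once with $\beta=2$, where $\om^\star_{2-2}=\om^\star$ gives $\om^\star_n=\int_0^1 r^{2n+1}\om^\star(r)\,dr\asymp \om^\star(1-\tfrac1{2n+2})/(2n+2)$, and once with $\beta=\alpha$, giving $\int_0^1 r^{2n+1}\om^\star_{\alpha-2}(r)\,dr\asymp \om^\star(1-\tfrac1{2n+2})/(2n+2)^{\alpha-1}$; dividing the two and using $(2n+2)\asymp(n+1)$ produces exactly the factor $(n+1)^{2-\alpha}$.

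To prove the moment estimate itself, the first step passes from the integral to a tail. By Lemma~\ref{le:sc1} the weight $\om^\star_{\beta-2}$ lies in $\R\subset\DD$ for every $\beta\in(0,\infty)$, so the moment asymptotic in Lemma~\ref{Lemma:replacement-Lemmas-Memoirs}(iv), applied to $\om^\star_{\beta-2}$ in place of $\om$, yields $\int_0^1 r^x\om^\star_{\beta-2}(r)\,dr\asymp \widehat{\om^\star_{\beta-2}}(1-\tfrac1x)$. The second step evaluates this tail. Writing $\om^\star_{\beta-2}(s)=(1-s)^{\beta-2}\om^\star(s)$ and inserting the relation $\om^\star(s)\asymp(1-s)\widehat\om(s)$ from Lemma~\ref{Lemma:replacement-Lemmas-Memoirs}(iv), I would estimate
\begin{equation*}
\widehat{\om^\star_{\beta-2}}(r)=\int_r^1(1-s)^{\beta-2}\om^\star(s)\,ds\asymp\int_r^1(1-s)^{\beta-1}\widehat\om(s)\,ds\asymp(1-r)^{\beta}\widehat\om(r),\quad r\to1^-.
\end{equation*}
Finally, substituting $r=1-\tfrac1x$ and using $\widehat\om(1-\tfrac1x)\asymp x\,\om^\star(1-\tfrac1x)$ (again Lemma~\ref{Lemma:replacement-Lemmas-Memoirs}(iv)) converts $(1-r)^\beta\widehat\om(r)=x^{-\beta}\widehat\om(1-\tfrac1x)$ into $\om^\star(1-\tfrac1x)/x^{\beta-1}$, which is the desired estimate.

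The computation is short, and the genuinely load-bearing point is the two-sided estimate of $\widehat{\om^\star_{\beta-2}}$. Its upper bound uses only that $\widehat\om$ is nonincreasing (pull $\widehat\om(r)$ out and integrate $(1-s)^{\beta-1}$, which converges precisely because $\beta>0$), whereas the lower bound is where the hypothesis $\om\in\DD$ is really used: I would restrict the integral to $[r,\tfrac{1+r}2]$, on which $(1-s)^{\beta-1}\asymp(1-r)^{\beta-1}$, and invoke the doubling $\widehat\om(r)\le C\widehat\om(\tfrac{1+r}2)$ defining $\DD$ to keep $\widehat\om(s)\gtrsim\widehat\om(r)$. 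One must keep $\beta-1>-1$ throughout so that no mass is lost at the endpoint; this is automatic since both specializations use $\beta\in\{\alpha,2\}\subset(0,\infty)$. Everything else is bookkeeping with the equivalences in Lemma~\ref{Lemma:replacement-Lemmas-Memoirs} together with the membership $\om^\star_{\beta-2}\in\R$ from Lemma~\ref{le:sc1}; in particular, no explicit information about the kernels $K^\om_a$ enters here.
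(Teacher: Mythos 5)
Your proof is correct. Bear in mind that the paper gives no computation for this lemma: it only remarks that the proof of \cite[Lemma~6.3]{PelRat} extends from the classes treated there to $\DD$ once Lemmas~\ref{Lemma:replacement-Lemmas-Memoirs} and~\ref{le:sc1} are available, so a line-by-line comparison is impossible; but your ingredients are exactly the ones the paper points to, while the packaging is your own and is arguably cleaner. You reduce everything to the single estimate $\int_0^1 r^x\,\om^\star_{\beta-2}(r)\,dr\asymp\om^\star\!\left(1-\frac1x\right)/x^{\beta-1}$, proved by feeding $\om^\star_{\beta-2}\in\R\subset\DD$ (Lemma~\ref{le:sc1}) into the moment asymptotic of Lemma~\ref{Lemma:replacement-Lemmas-Memoirs}(iv) and then evaluating the tail $\widehat{\om^\star_{\beta-2}}(r)\asymp(1-r)^{\beta}\widehat{\om}(r)$ --- upper bound by monotonicity of $\widehat{\om}$ (this is precisely where $\beta>0$ is needed), lower bound by restricting the integral to $[r,\frac{1+r}{2}]$ and invoking the doubling inequality that defines $\DD$. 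The first display of the lemma is then the case $\beta=\alpha$, $x=n$, and the second follows by the neat device of applying the same estimate at $x=2n+1$ with both $\beta=2$ (so that $\om^\star_{0}=\om^\star$ and the integral is $\om^\star_n$) and $\beta=\alpha$, and dividing; the shifts from $1-\frac1n$ to $1-\frac1{n+1}$ and from $2n+1$ to $n+1$ cost only doubling constants, and the finitely many small $n$ are absorbed trivially since all quantities are positive and finite. What this organization buys is a visibly unified argument: both asymptotic equalities are the same moment estimate in disguise, and no separate treatment of $\om^\star_n$ is ever required.
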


The third auxiliary result needed is a consequence of the kernel estimates proved in the previous section.

\begin{corollary}\label{co:kernelstimate}
Let $0<p<\infty$, $\om\in\DD$, $-\infty<\a<1$ and $N\in\N$
such that $(1-|z|)^{Np-2}\left(\om_{-\alpha}^\star(z)\right)^{\frac{p}{2}}$ is a regular weight.
Then
    \begin{equation}\label{112}
    \int_{\D}\left|(K^{\alpha,\omega}_a)^{(N)}(z)\right|^p(1-|z|)^{Np-2}\left(\om_{-\alpha}^\star(z)\right)^{\frac{p}{2}}\,dA(z)
    \asymp\left(\om_{-\alpha}^\star(a) \right)^{-\frac{p}{2}},\quad |a|\ge\frac12.
    \end{equation}
\end{corollary}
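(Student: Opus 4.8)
The plan is to recognize $(K^{\a,\om}_a)^{(N)}$ as a derivative of a \emph{regular} Dirichlet‑space kernel, to which the estimates of the previous section apply directly. Writing $u:=\om^\star_{-\a}$ and $u_n:=\int_0^1 r^{2n+1}u(r)\,dr$, the monomials $z^{n+1}$, $n\ge0$, are mutually orthogonal in $H_\a(\om^\star)$ with respect to the natural inner product $\int_\D f'\overline{g'}\om^\star_{-\a}\,dA$ (plus a constant term), and $\|z^{n+1}\|_{H_\a(\om^\star)}^2=2(n+1)^2 u_n$. Hence, arguing exactly as for \eqref{kernelformula},
\[
K^{\a,\om}_a(z)=c+\sum_{n=0}^\infty\frac{(\overline a z)^{n+1}}{2(n+1)^2 u_n},
\]
where $c$ is an additive constant. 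This is precisely \eqref{kernelformula} with $\om$ replaced by $u$, so, since $N\ge1$ annihilates $c$, we obtain $(K^{\a,\om}_a)^{(N)}=(K^u_a)^{(N)}$, where $K^u_a$ is the reproducing kernel of the Dirichlet space $D_u$.

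The key structural point is that $u=\om^\star_{-\a}\in\R$. Indeed, Lemma~\ref{le:sc1} applied with parameter $2-\a$ (admissible since $\a<1$) gives $\om^\star_{-\a}\in\R\subset\DD$, so all results of the previous section are available with $\om$ replaced by $u$. Moreover, the left‑hand side of \eqref{112} is exactly $\|(K^u_a)^{(N)}\|_{A^p_v}^p$ with $v(z)=(1-|z|)^{Np-2}(\om^\star_{-\a}(z))^{p/2}$, and $v$ is regular by the standing hypothesis. Thus Corollary~\ref{co:ernelstimaten}(ii), applied to the pair $(u,v)$, will yield \eqref{112} once its hypothesis \eqref{Eq:hypothesis-kernel} is verified for $(u,v)$.

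To verify \eqref{Eq:hypothesis-kernel} I would reduce everything to $\widehat{\om}$. Regularity of $u$ and $v$ gives $\widehat u(t)\asymp(1-t)u(t)$ and $\widehat v(t)\asymp(1-t)v(t)$, and the relation $\om^\star(z)\asymp\widehat{\om}(z)(1-|z|)$ of Lemma~\ref{Lemma:replacement-Lemmas-Memoirs} yields $\om^\star_{-\a}(t)\asymp(1-t)^{1-\a}\widehat{\om}(t)$. A direct power count then shows that the integrand in \eqref{Eq:hypothesis-kernel} is comparable to $(1-t)^{-1-\beta}\widehat{\om}(t)^{-p/2}$ and its right‑hand side to $(\om^\star_{-\a}(r))^{-p/2}\asymp(1-r)^{-\beta}\widehat{\om}(r)^{-p/2}$, where $\beta:=\tfrac{p(1-\a)}{2}$. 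The hypothesis $\a<1$ is used precisely here: it makes $\beta>0$, so that $\int_0^r(1-t)^{-1-\beta}\,dt\asymp(1-r)^{-\beta}$ rather than diverging logarithmically. Since $\widehat{\om}$ is nonincreasing, $\widehat{\om}(t)^{-p/2}\le\widehat{\om}(r)^{-p/2}$ for $t\le r$, whence
\[
\int_0^r(1-t)^{-1-\beta}\widehat{\om}(t)^{-p/2}\,dt\le\widehat{\om}(r)^{-p/2}\int_0^r(1-t)^{-1-\beta}\,dt\lesssim(1-r)^{-\beta}\widehat{\om}(r)^{-p/2},
\]
which is \eqref{Eq:hypothesis-kernel}. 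Corollary~\ref{co:ernelstimaten}(ii) then gives \eqref{kn4}, and the same power count simplifies its right‑hand side $\frac{\widehat v(a)}{\widehat u(a)^p(1-|a|)^{p(N-1)-1}}$ to $(\om^\star_{-\a}(a))^{-p/2}$, which is \eqref{112} as $|a|\to1^-$; the range $|a|\ge\tfrac12$ follows since all quantities involved are positive and continuous, hence mutually comparable, on $\tfrac12\le|a|\le r_0<1$.

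The decisive, and only genuinely nonroutine, step is the reduction in the first two paragraphs: identifying $H_\a(\om^\star)$, at the level of kernels, with the Dirichlet space induced by the regular weight $\om^\star_{-\a}$, which is exactly what makes Lemma~\ref{le:sc1} and the estimates of the previous section applicable. Once this is in place, the verification of \eqref{Eq:hypothesis-kernel} and the final simplification are bookkeeping, the single substantive observation being that $\a<1$ (equivalently $\beta>0$) guarantees the endpoint domination above.
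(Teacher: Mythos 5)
Your proof is correct and follows essentially the same route as the paper's: both reduce to Corollary~\ref{co:ernelstimaten}(ii) applied with $\om^\star_{-\a}$ (regular by Lemma~\ref{le:sc1}) in the role of the kernel weight and $W(z)=(1-|z|)^{Np-2}(\om^\star_{-\a}(z))^{p/2}$ in the role of $v$, verifying the hypothesis \eqref{Eq:hypothesis-kernel} by the same power count in terms of $\widehat{\om}$ and the same use of the monotonicity of $\widehat{\om}$ together with $\a<1$. The only difference is cosmetic: you make explicit the identification of $K^{\a,\om}_a$ with the Dirichlet-space kernel induced by $\om^\star_{-\a}$, which the paper's proof leaves implicit.
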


\begin{proof}
Since $W(z)=(1-|z|)^{Np-2}\left(\om_{-\alpha}^\star(z)\right)^{\frac{p}{2}}$ is regular by the hypothesis, Lemma~\ref{Lemma:replacement-Lemmas-Memoirs} yields
    $$
    \widehat{W}(z)\asymp (1-|z|)^{Np-1}\left(\om_{-\alpha}^\star(z)\right)^{\frac{p}{2}}
    \asymp (1-|z|)^{p\left(N+\frac{1-\a}{2}\right)-1}\widehat{\om}(z)^{\frac{p}{2}}.
    $$
Another application Lemma~\ref{Lemma:replacement-Lemmas-Memoirs} gives
    \begin{equation*}\begin{split}
    \int_0^{|a|}
    \frac{\widehat{W}(s)}
    {(\widehat{\om_{-\alpha}^\star} (s))^p(1-s)^{p\left(N-1\right)}}\,ds
    & \asymp
    \int_0^{|a|}
    \frac{\widehat{W}(s)}
    {\widehat{\om}(s)^p(1-s)^{p\left(N+1-\a\right)}}\,ds
   \asymp\int_0^{|a|}\frac{1}{\widehat{\om}(s)^{\frac{p}{2}}(1-s)^{p\frac{1-\a}{2}+1}}\,ds
    \\ & \le
   \frac{1}{\widehat{\om}(a)^{\frac{p}{2}}}\int_0^{|a|}\frac{ds}{(1-s)^{p\frac{1-\a}{2}+1}}
   \asymp \frac{1}{\widehat{\om}(a)^{\frac{p}{2}}(1-|a|)^{p\frac{1-\a}{2}}}\\
   &\asymp
   \left(\om_{-\alpha}^\star(a) \right)^{-\frac{p}{2}},\quad |a|\ge\frac12.
   \end{split}
   \end{equation*}
Consequently, \eqref{112} follows from Corollary~\ref{co:ernelstimaten}.
\end{proof}

The next lemma shows that the derivatives of the reproducing kernel $K^{\alpha,\omega}_a$ are essentially constant in sufficiently small hyperbolic discs centered at the point $a$. The proof of this lemma is standard and therefore omitted, see \cite[Lemma~6.4]{PelRat} for a similar result.

\begin{lemma}\label{b6}
Let $\om\in\DD$, $-\infty<\alpha<1$ and $N\in\N\cup\{0\}$. Then there exists $r_0=r_0(\om,\alpha,N)\in(0,1)$
such that $ \left|\frac{\partial^N K^{\alpha,\omega}(a,z)}{\partial^N a }\right|\asymp \left|\left(\frac{\partial^N K^{\alpha,\omega}(a,z)}{\partial^N a}\right)\Big|_{z=a}\right|$ for all $a\in\D$ and
$z\in\Delta(a,r_0)$.
\end{lemma}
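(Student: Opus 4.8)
The plan is to fix $a\in\D$ and regard $\Phi_a(z):=\frac{\partial^N K^{\alpha,\omega}(a,z)}{\partial^N a}$ as a function of $z$. Since $K^{\alpha,\omega}(a,z)=K^{\alpha,\omega}_z(a)$ and $\partial_a^N$ acts only on the (analytic) first variable, one has the identity $\Phi_a(z)=\left(K^{\alpha,\omega}_z\right)^{(N)}(a)$, and $\Phi_a$ is conjugate-analytic in $z$; hence $g:=\overline{\Phi_a}$ is analytic on $\D$ and the claim is equivalent to $|g(z)|\asymp|g(a)|$ for $z\in\Delta(a,r_0)$. We may assume $|a|\ge\tfrac12$, which is the only range required in the applications. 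The strategy is the classical one: first pin down the size of the central value $|g(a)|$ together with a uniform upper bound for $|g|$ on a fixed disc $\Delta(a,R)$, and then bound the oscillation of $g$ on a much smaller disc by a derivative estimate.

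First I would compute the diagonal value. Expanding $K^{\alpha,\omega}$ in the orthonormal basis of monomials $e_n(z)=c_nz^{m_n}$ through \eqref{RKformula} (as in \eqref{kernelformula}) gives $g(z)=\sum_n\overline{e_n^{(N)}(a)}\,e_n(z)$, so that on the diagonal $g(a)=a^{N}\sum_n|c_n|^2\frac{m_n!}{(m_n-N)!}|a|^{2(m_n-N)}$. All the terms of this series are positive: there is no cancellation on the diagonal. A summation estimate based on the coefficient asymptotics in Lemma~\ref{Lemma:replacement-Lemmas-Memoirs} and Lemma~\ref{le:sc2} (exactly as in the proof of Lemma~\ref{HL}) then yields $|g(a)|\asymp\left(\om^\star_{-\alpha}(a)\right)^{-1}(1-|a|)^{-N}=:E(a)$ for $|a|\ge\tfrac12$.

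For the upper bound on a fixed disc I would use the identity $|\Phi_a(z)|=\big|\left(K^{\alpha,\omega}_z\right)^{(N)}(a)\big|$ together with the sub-mean value property of the subharmonic function $\zeta\mapsto\big|\left(K^{\alpha,\omega}_z\right)^{(N)}(\zeta)\big|^p$ at the point $\zeta=a$, noting that $a\in\Delta(z,R)$ whenever $z\in\Delta(a,R)$. Bounding the resulting average by the weighted integral \eqref{112} of Corollary~\ref{co:kernelstimate}, whose weight $(1-|\cdot|)^{Np-2}\left(\om^\star_{-\alpha}\right)^{p/2}$ is essentially constant on $\Delta(a,R)$ by Lemma~\ref{Lemma:replacement-Lemmas-Memoirs}, and using $\om^\star_{-\alpha}(z)\asymp\om^\star_{-\alpha}(a)$ there, gives $\sup_{z\in\Delta(a,R)}|g(z)|\lesssim E(a)\asymp|g(a)|$.

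The final step is to control the oscillation. A Cauchy estimate on pseudohyperbolic discs gives $(1-|\zeta|^2)|g'(\zeta)|\lesssim\sup_{\Delta(a,R)}|g|\lesssim|g(a)|$ for $\zeta\in\Delta(a,R/2)$, and integrating $g'$ along the hyperbolic geodesic from $a$ to $z\in\Delta(a,r_0)$ yields $|g(z)-g(a)|\lesssim\varrho(a,z)\,|g(a)|$. Choosing $r_0\le R/2$ small enough that the implied constant times $r_0$ does not exceed $\tfrac12$ forces $\tfrac12|g(a)|\le|g(z)|\le\tfrac32|g(a)|$ on $\Delta(a,r_0)$, which is the assertion. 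The step I expect to be most delicate is securing the two matching bounds for the central value: the lower bound $|g(a)|\gtrsim E(a)$ and the upper bound $\sup_{\Delta(a,R)}|g|\lesssim E(a)$ with the same $E(a)$, since only then is there room for the oscillation to be a controlled fraction of $|g(a)|$. The lower bound cannot be extracted from the integral estimates of Section~\ref{Section:Schatten-Toeplitz} alone — those localize the mass near the diagonal but do not force the value at the centre to be large — and it is precisely the positivity of the diagonal series that saves the day. This is also why the smallness of $r_0$ is essential: away from $a$ the kernel derivative may carry cancellation and even vanish, as the perturbation phenomenon recalled in the introduction shows.
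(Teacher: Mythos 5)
Your proof is correct, and it follows the same three-step skeleton as the paper's argument (which is the one of \cite[Lemma~6.4]{PelRat}, adapted): (a) the diagonal value is a positive coefficient series, hence free of cancellation, and is $\asymp(1-|a|)^{-N}\left(\om^\star_{-\alpha}(a)\right)^{-1}$; (b) a uniform upper bound of the same size on a fixed pseudohyperbolic disc; (c) an oscillation bound via a Cauchy-type derivative estimate, with $r_0$ chosen so small that the oscillation is at most half the central value. Steps (a) and (c) in your write-up coincide with the paper's (your Cauchy estimate for $g'$ is, after conjugation, exactly the paper's bound $\bigl|\partial^{N}_a\partial_{\overline z}K^{\alpha,\omega}\bigr|\lesssim(1-|a|)^{-N-1}\left(\om^\star_{-\alpha}(a)\right)^{-1}$ obtained from Cauchy's formula). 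The genuine difference is step (b): the paper gets the upper bound pointwise by applying the Cauchy--Schwarz inequality to the coefficient series, $\bigl|\partial_a^NK^{\alpha,\omega}(a,z)\bigr|\lesssim\bigl(\mathrm{diag}(a)\bigr)^{1/2}\bigl(\mathrm{diag}(z)\bigr)^{1/2}$, and then uses regularity of $\om^\star_{-\alpha}$ (Lemma~\ref{le:sc1} together with \eqref{eq:r2}) to replace $\mathrm{diag}(z)$ by $\mathrm{diag}(a)$; you instead use subharmonicity of $|(K_z^{\alpha,\omega})^{(N)}|^p$ plus the integral estimate \eqref{112} of Corollary~\ref{co:kernelstimate}. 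Both are valid, and there is no circularity since Corollary~\ref{co:kernelstimate} is independent of this lemma; but the paper's route is more economical: it is elementary, needs no auxiliary exponent, and covers all $N\in\N\cup\{0\}$ at once, whereas your route inherits the hypotheses of Corollary~\ref{co:kernelstimate} --- you must choose $p$ so large that $(1-|z|)^{Np-2}\left(\om^\star_{-\alpha}(z)\right)^{p/2}$ is regular (possible because $\alpha<1$ and $\om\in\DD$), and, as stated, that corollary requires $N\in\N$, so your case $N=0$ needs the extra remark that its proof extends verbatim once the weight is regular. Finally, your restriction to $|a|\ge\tfrac12$ is not a cosmetic convenience: for $N\ge1$ the diagonal carries the factor $\overline{a}^{\,N}$ and vanishes at the origin while the left-hand side does not, so the comparison can only hold with constants uniform away from $a=0$ (or asymptotically as $|a|\to1^-$, which is how the paper's own estimates are phrased and how the lemma is used).
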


In the next lemma we define an auxiliary linear operator induced by a sequence $\{b_j\}\subset\D$ and a derivative of the reproducing kernel $K^{\a,\omega}_a$. It turns out that this operator is bounded from any separable Hilbert space to $H_\alpha(\om^\star)$ whenever $\{b_j\}$ is uniformly discrete, and even onto if $\{b_j\}$ is a
$\delta$-lattice.

\begin{lemma}\label{le:wo}
Let $\om\in\DD$, $-\infty<\a<1$ and $N\in\N$. For $a\in\D\setminus\{0\}$,
define
    $$
    h^{N,\om_{-\alpha}^\star}_{a}(z)=(1-|a|)^{N}\left(\om_{-\alpha}^\star(a)\right)^{\frac{1}{2}}\frac{\partial^N K^{\a,\omega}(z,a)}{\partial^N \overline{a}},\quad z\in\D,
    $$
and set $h^{N,\om_{-\alpha}^\star}_0\equiv 0$.
Let $\{b_j\}_{j=0}^\infty$ be a uniformly discrete sequence
ordered by increasing moduli, and let $\{e_ j\}_{j=0}^\infty$ be an
orthonormal basis of a Hilbert space $H$. Let $J$ be the linear
operator such that
$J(e_0)=\frac{1}{\left(\om_{-\alpha}^\star(\D)\right)^{1/2}}$, $J(e_j)=\frac{z^j}{j^2\left(\om^\star_{-\alpha}\right)_{j-1}^{1/2}}$ for all $j=1,\dots, N-1$, and
$J(e_j)=h^{N,\om_{-\alpha}^\star}_{b_j}$ if $j\ge N$. Then $J:H\to
H_\alpha(\om^\star)$ is bounded. Moreover, if $\{b_j\}$ is a
$\delta$-lattice for some $\delta\in (0,1)$, then $J$ is onto.
\end{lemma}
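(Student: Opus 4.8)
The plan is to read off everything from an explicit description of the inner products $\langle f,Je_j\rangle_{H_\alpha(\om^\star)}$, reducing boundedness and surjectivity of $J$ to an upper and a lower sampling inequality for the weighted Dirichlet norm. The starting point is that differentiating the reproducing property $f(a)=\langle f,K^{\alpha,\omega}_a\rangle_{H_\alpha(\om^\star)}$ exactly $N$ times in $\overline a$ gives $\langle f,\frac{\partial^N K^{\a,\omega}(\cdot,a)}{\partial\overline a^N}\rangle_{H_\alpha(\om^\star)}=f^{(N)}(a)$, so that
\[
\langle f,h^{N,\om_{-\alpha}^\star}_{b_j}\rangle_{H_\alpha(\om^\star)}=(1-|b_j|)^{N}\left(\om_{-\alpha}^\star(b_j)\right)^{\frac12}f^{(N)}(b_j),\qquad j\ge N,
\]
while for $0\le j\le N-1$ the vectors $Je_j$ are constant multiples of the monomials $z^j$, which are mutually orthogonal in $H_\alpha(\om^\star)$ and form a basis of the polynomials of degree at most $N-1$ with norms bounded above and below by constants depending only on $N$. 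Since $\a<1$, Lemma~\ref{le:sc1} shows that $\om^\star_{-\alpha}$ is a regular weight, hence essentially constant on pseudohyperbolic discs by \eqref{eq:r2}; and the Littlewood--Paley formula~\cite{PavP} applied to $f'$ yields
\[
\|f\|_{H_\alpha(\om^\star)}^2\asymp\int_\D|f^{(N)}(z)|^2(1-|z|)^{2N-2}\om^\star_{-\alpha}(z)\,dA(z)+\sum_{j=0}^{N-1}|f^{(j)}(0)|^2 .
\]
These facts turn the two assertions into discrete analogues of the integral on the right.

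For boundedness it suffices, by duality, to prove that $\{Je_j\}$ is a Bessel sequence, that is, that the analysis map $f\mapsto\{\langle f,Je_j\rangle\}$ sends $H_\alpha(\om^\star)$ boundedly into $\ell^2$; the synthesis operator $J$ is then automatically bounded. The finitely many terms with $j<N$ are harmless, so the point is the upper estimate $\sum_{j\ge N}(1-|b_j|)^{2N}\om^\star_{-\alpha}(b_j)|f^{(N)}(b_j)|^2\lesssim\|f\|^2_{H_\alpha(\om^\star)}$. First I would use uniform discreteness so that the discs $\Delta(b_j,\gamma)$ are pairwise disjoint for a suitable $\gamma$; then subharmonicity of $|f^{(N)}|^2$ gives $|f^{(N)}(b_j)|^2\lesssim(1-|b_j|)^{-2}\int_{\Delta(b_j,\gamma)}|f^{(N)}|^2\,dA$, and the regularity of $\om^\star_{-\alpha}$ lets me pull the weight and the factor $(1-|b_j|)^{2N-2}$ inside the integral. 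Summing over $j$ and invoking disjointness together with the displayed Littlewood--Paley identity closes the bound. (That each $h^{N,\om_{-\alpha}^\star}_{b_j}$ has $H_\alpha(\om^\star)$-norm comparable to one, which is Corollary~\ref{co:kernelstimate} with $p=2$, confirms that the normalizing factors are chosen correctly.)

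For surjectivity I would use the standard fact that a bounded operator between Hilbert spaces is onto exactly when its adjoint is bounded below, and so prove $\|J^\ast f\|_H^2=\sum_j|\langle f,Je_j\rangle|^2\gtrsim\|f\|^2_{H_\alpha(\om^\star)}$. Writing $f=P+Q$ with $P$ the Taylor polynomial of $f$ of degree $N-1$ and $Q=f-P$, the terms with $j<N$ detect precisely the coefficients of $P$ (since $Q$ is orthogonal to every $z^j$, $j<N$) and reproduce $\sum_{j=0}^{N-1}|f^{(j)}(0)|^2$, while $f^{(N)}=Q^{(N)}$ turns the tail into a lower Riemann-type sum for $\int_\D|f^{(N)}|^2(1-|z|)^{2N-2}\om^\star_{-\alpha}\,dA$. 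The crux is the lower sampling estimate: because a $\delta$-lattice is a $5\delta$-net, the discs $\Delta(b_j,5\delta)$ cover $\D$ with bounded overlap, and on each I would compare $|f^{(N)}(z)|^2$ with $|f^{(N)}(b_j)|^2$, bounding the oscillation of $f^{(N)}$ over $\Delta(b_j,5\delta)$ by $\lesssim\delta$ times a local supremum and, via subharmonicity and the regularity of the weight, by $\lesssim\delta^2$ times the integral of $|f^{(N)}|^2(1-|z|)^{2N-2}\om^\star_{-\alpha}$ over a slightly enlarged disc. Summing, the oscillation terms contribute $\lesssim\delta^2\int_\D|f^{(N)}|^2(1-|z|)^{2N-2}\om^\star_{-\alpha}\,dA$, which (the integral being finite since $f\in H_\alpha(\om^\star)$) is absorbed into the left-hand side once $\delta$ is small enough, leaving $\int_\D|f^{(N)}|^2(1-|z|)^{2N-2}\om^\star_{-\alpha}\,dA\lesssim\sum_{j\ge N}(1-|b_j|)^{2N}\om^\star_{-\alpha}(b_j)|f^{(N)}(b_j)|^2$. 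Combining the low-order block, this sampling bound, and the Littlewood--Paley identity gives $\|J^\ast f\|_H\gtrsim\|f\|_{H_\alpha(\om^\star)}$, so $J$ is onto.

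I expect the absorption step in the lower sampling estimate to be the main obstacle: one must quantify the oscillation of $f^{(N)}$ over $\Delta(b_j,5\delta)$ uniformly in $j$ and control the comparisons of $(1-|z|)$ and $\om^\star_{-\alpha}(z)$ across each disc through \eqref{eq:r2}, and the argument only closes for $\delta$ sufficiently small (the sense in which the lattice must be fine enough, consistent with the ``for some $\delta$'' in the statement). The finite-overlap bookkeeping for the covering, and the verification that the low-order block is genuinely comparable to $\sum_{j=0}^{N-1}|f^{(j)}(0)|^2$, are then routine.
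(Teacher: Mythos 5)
Your boundedness argument coincides with the paper's. The paper likewise tests $J\left(\sum_j c_je_j\right)$ against an arbitrary $f\in H_\alpha(\om^\star)$, uses the $N$-times differentiated reproducing formula to turn the kernel terms into the values $f^{(N)}(b_j)$, applies Cauchy--Schwarz, and then proves exactly your upper sampling inequality from subharmonicity of $|f^{(N)}|^2$, the local constancy \eqref{eq:r2} of the regular weight $\om^\star_{-\alpha+2N-2}$, and uniform discreteness, finishing with $N-1$ applications of \eqref{LP1}; your appeal to the Littlewood--Paley formula of \cite{PavP} is an interchangeable substitute for that iteration, since the relevant weights are regular. Where you genuinely diverge is surjectivity. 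Both proofs reduce it to the fact that $J$ is onto if and only if $J^\star$ is bounded below, but the paper then concludes in one stroke: since $\om^\star_{-\alpha+2N-2}$ is regular it satisfies Luecking's condition $C_p$ by \eqref{eq:r2}, and \cite[Theorem~3.14]{LuecInd85} is precisely the lower sampling inequality for $\delta$-lattices, which \eqref{LP1} converts into the lower bound for $\|J^\star f\|_H$. You instead prove the sampling inequality by hand via oscillation and absorption; this is a legitimate, self-contained alternative, and it is essentially the argument underlying Luecking's theorem. The citation buys brevity and hides the delicate constants; your route makes the mechanism visible and keeps the lemma independent of \cite{LuecInd85}.

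One caution, because it is exactly the delicacy the citation packages away. Your remark that the covering discs $\Delta(b_j,5\delta)$ have bounded overlap is true but is not the overlap that matters: the Cauchy-estimate and subharmonicity steps force you onto enlarged discs of \emph{fixed} pseudohyperbolic radius (independent of $\delta$), and since the lattice is only $\delta/5$-separated, those discs have overlap multiplicity $\asymp\delta^{-2}$. Your intermediate chain, taken literally (squared oscillation $\lesssim\delta^2$ times the integral over the enlarged disc, then summing with bounded overlap), would therefore lose the smallness entirely. The argument does close, but only with the finer accounting: the squared oscillation contributes $\delta^2$, integrating it over $\Delta(b_j,5\delta)$ (area $\asymp\delta^2(1-|b_j|)^2$) contributes a second factor $\delta^2$, so each term carries $\delta^4$, and after paying the $\delta^{-2}$ overlap of the enlarged discs a net factor $\delta^2$ survives for absorption. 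Note also that, just like the route through \cite{LuecInd85}, this only works for $\delta$ sufficiently small; the lemma's phrase ``for some $\delta\in(0,1)$'' promises more than either proof delivers, but every application in the paper uses lattices that may be taken as fine as needed, so nothing is lost.
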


\begin{proof}
We first observe that $J:H\to
H_\alpha(\om^\star)$ is bounded if
    \begin{equation*}
    \left \| J \left(\sum_{j} c_ j e_ j \right) \right \|_{H_\alpha(\om^\star)}\lesssim \left
    (\sum_ j |c_ j|^2\right )^{1/2}
    \end{equation*}
for all sequences $\{c_j\}$ in $\ell^2$. Next, $N$ differentiations of
the reproducing formula \eqref{eq:repro} give
    \begin{equation}\label{eq:reproN}
    f^{(N)}(a)=
    \int_\D f'(z)\frac{\partial^{N+1} K^{\alpha,\omega}(a,z)}{\partial^N a\partial \overline{z}}\,\om^\star_{-\alpha}(z)\,dA(z),\quad a\in\D,\quad f\in H_\alpha(\om^\star).
    \end{equation}
By \eqref{kernelformula},
$h^{N,\om_{-\alpha}^\star}_{a}$ vanishes at the origin
for all $a\in\D$. Therefore, if $f(z)=\sum_{n=0}^\infty a_n z^n\in
H_\a(\om^\star)$, then \eqref{eq:reproN} and the Cauchy-Schwarz
inequality yield
   \begin{equation*}
   \begin{split}
    \left|\left \langle J \left(\sum_{j=0}^\infty c_ j e_ j \right), f\right
    \rangle_{H_\a(\om^\star)} \right|
 &   =\left|\left\langle \frac{c_0}{\left(\om_{-\alpha}^\star(\D)\right)^{1/2}}+\sum_{j=1}^{N-1}c_j\frac{z^j}{j^2\left(\om^\star_{-\alpha}\right)_{j-1}^{1/2}}
    +\sum_{j =N}^\infty c_ j h^{N,\om_{-\alpha}^\star}_{b_j}
    ,f \right \rangle_{H_\a(\om^\star)} \right |\\
    &=\Bigg|c_0\overline{a}_0\left(\om_{-\alpha}^\star(\D)\right)^{1/2}+2\left(\sum_{j=1}^{N-1}c_j\overline{a}_j(\om_{-\a}^\star)^{1/2}_{j-1}\right)\\ &\quad +\sum_{j =N}^\infty c_ j(1-|b_j|)^{N}\left(\om_{-\alpha}^\star(b_j)\right)^{\frac{1}{2}}\overline{f^{(N)}(b_j)}
    \Bigg|\\
    &\lesssim\|\{c_j\}\|_{\ell^2}\left(\sum_{j=0}^{N-1}|a_j|^2+\sum_{j=N}^\infty
    (1-|b_j|)^{2N}\om_{-\alpha}^\star(b_j)|f^{(N)}(b_j)|^2\right)^{1/2}.
    \end{split}
    \end{equation*}
Now, since $|f^{(N)}|^2$ is subharmonic, \eqref{eq:r2} for the
regular weight $\om_{-\alpha+2N-2}^\star$ and the assumption that
$\{b_j\}$ is uniformly discrete, give
    \begin{equation*}
    \begin{split}
    \sum_{j=N}^\infty (1-|b_j|)^{2N}\om_{-\alpha}^\star(b_j)|f^{(N)}(b_j)|^2 &\lesssim
    \int_{\D}|f^{(N)}(z)|^2\om_{-\alpha+2N-2}^\star(z)\,dA(z).
    \end{split}
    \end{equation*}
Consequently, $N-1$ applications of \eqref{LP1} show that
$J:H\to H_\a(\om^\star)$ is bounded.

Let now $\{b_j\}$ be a $\delta$-lattice for some $\delta\in
(0,1)$. To see that in this case the operator $J:H\to H_\a(\om^\star)$ is onto,
note first that the adjoint of $J$ is
    $$
    J^\star(f)=a_0\left(\om_{-\alpha}^\star(\D)\right)^{1/2}e_0
    +2\sum_{j=1}^{N-1}a_j(\om_{-\a}^\star)^{1/2}_{j-1}e_j
    +\sum_{j=N}^\infty (1-|b_j|)^{N}\left(\om_{-\alpha}^\star(b_j)\right)^{\frac{1}{2}}f^{(N)}(b_j)e_j.
    $$
Since the weight $\om_{-\alpha+2N-2}^\star$ is regular, it satisfies the
$C_p$ property defined at \cite[p.~321]{LuecInd85} by \eqref{eq:r2}. Therefore
\cite[Theorem~3.14]{LuecInd85} and $N-1$ applications of \eqref{LP1} imply that $J^\star$ is bounded below, and hence injective. In particular, $\textrm{Ker}\,(J^\star)=\{0\}$, and thus $J$ is onto.
\end{proof}

We are now ready to prove the main result of this section. The reasoning employed in the proof is based on ideas from~\cite{Lu87}.

\medskip

\noindent\emph{Proof of }Theorem~\ref{th:tmuextended}. A careful inspection of the proof of \cite[Theorem~6.11]{PelRat} with Lemmas~\ref{Lemma:replacement-Lemmas-Memoirs},~\ref{le:sc1},~\ref{le:sc2} and Corollary~\ref{co:kernelstimate} in hand shows that it can be carried over to the class $\DD$. Therefore it suffices to consider the case $0<p<1$.

Each regular weight is comparable to a continuous weight by the definition, and hence \cite[p. 10 (ii)]{PelRat} shows that we can fix $N=N(p,\alpha,\om)$ large enough such that $(1-|z|)^{Np-2}\left(\om_{-\alpha}^\star(z)\right)^{\frac{p}{2}}$ is a regular weight. This simple observation is one of the key steps in the proof.

Let $\mu$ be a complex Borel measure on $\D$ such that \eqref{eq:slext} is
satisfied. By the extension of \cite[Theorem~6.11]{PelRat} to $\DD$, $T_\mu\in
\SSS_1(H_\alpha(\om^\star))$ and, in particular, $T_\mu$ is
compact on $H_\a(\om^\star)$. We will show that
$T_\mu\in\SSS_p(H_\alpha(\om^\star))$. To do this, Corollary~\ref{co:kernelstimate} will be used.

Let $\{b_j\}_{j=0}^\infty$ be a
$\delta$-lattice. Then, for a fixed basis $\{e_j\}_{j=0}^\infty$
of $H_\a(\om^\star)$, the operator $J:H_\a(\om^\star)\to
H_\a(\om^\star)$, defined on Lemma~\ref{le:wo}, is bounded and
onto. Therefore, by \cite[Proposition~1.30]{Zhu}, $T_\mu\in\SSS_p(H_\alpha(\om^\star))$
if and only if $J^\star T_\mu J \in\SSS_p(H_\alpha(\om^\star))$. This together with
\cite[Lemma~5]{Lu87} (see also
\cite[Proposition~1.29]{Zhu}) shows that it suffices to prove
    \begin{equation*}
    \sum_{j=0}^\infty\sum_{k=0}^\infty\left|\langle T_\mu(h_j),h_k \rangle \right|^p\lesssim\sum_{R_j\in\Upsilon}
    \left(\frac{|\mu|(R_j)}{\omega^\star_{-\alpha}(z_j)}\right)^p,
    \end{equation*}
where $J(e_j)=h_j$.
To see this, we first observe that \eqref{eq:st12} yields
    \begin{equation*}
    \begin{split}
    \left|\langle T_\mu(h_j),h_k \rangle \right|
    &\le \int_{\D} |h_j(z)||h_k(z)|\,d|\mu|(z)
    =\sum_{R_n\in\Upsilon} \int_{R_n} |h_j(z)||h_k(z)|\,d|\mu|(z)\\
    &\le\sum_{R_n\in\Upsilon} |h_j(\wtz_{j,n})||h_k(\wtz_{k,n})||\mu|(R_n),
    \end{split}
    \end{equation*}
where $|h_j(\wtz_{j,n})|=\max_{z\in \overline{R}_n} |h_j(z)|$ for
each $j\in\N\cup\{0\}$. Since $0<p<1$ by the assumption, we deduce
    \begin{equation*}
    \begin{split}
    \sum_{j=0}^\infty\sum_{k=0}^\infty\left|\langle T_\mu(h_j),h_k \rangle \right|^p
    &\le  \sum_{j=0}^\infty\sum_{k=0}^\infty \sum_{R_n\in\Upsilon} |h_j(\wtz_{j,n})|^p|h_k(\wtz_{k,n})|^p|\mu|(R_n)^p\\
    & = \sum_{R_n\in\Upsilon}|\mu|(R_n)^p\left(\sum_{j=0}^\infty\sum_{k=0}^\infty
    |h_j(\wtz_{j,n})|^p|h_k(\wtz_{k,n})|^p\right).
    \end{split}
    \end{equation*}
Consequently, the proof will be finished once we prove
    \begin{equation}\label{eq:tmu2}
    \sum_{j=0}^\infty |h_j(\wtz_{j,n})|^p\le C \left(\om^\star_{-\a}(z_n)\right)^{-\frac{p}{2}},
    \end{equation}
where the constant $C>0$ is independent of $n$. Clearly,
    \begin{equation*}
    \begin{split}
    \sum_{j=0}^\infty |h_j(\wtz_{j,n})|^p
    &\lesssim N+ \sum_{j=N}^\infty (1-|b_j|)^{Np}\left(\om_{-\alpha}^\star(b_j)\right)^{\frac{p}{2}}
    \left|\frac{\partial^N K^{\a,\omega}(\wtz_{j,n},b_j)}{\partial^N \overline{b}_j}\right|^p\\
    &=N+ \sum_{j=N}^\infty (1-|b_j|)^{Np}\left(\om_{-\alpha}^\star(b_j)\right)^{\frac{p}{2}}
    \left|\frac{\partial^N K^{\a,\omega}(b_j,\wtz_{j,n})}{\partial^N
    b_j}\right|^p.
    \end{split}
    \end{equation*}
Since $\{\wtz_{j,n}\}\subset \overline{R_n}$, by applying the subharmonicity to the second variable of
$\frac{\partial^N K^{\a,\omega}(z,w)}{\partial^N z}$, and then to
the first one, and using \eqref{eq:r2} for the regular weight
$\om_{-\alpha}^\star$, we get
\begin{equation*}
\begin{split}
    &\sum_{j=N}^\infty (1-|b_j|)^{Np}\left(\om_{-\alpha}^\star(b_j)\right)^{\frac{p}{2}}
    \left|\frac{\partial^N K^{\a,\omega}(b_j,\wtz_{j,n})}{\partial^N
    b_j}\right|^p\\
    &\lesssim\frac{1}{(1-|z_n|)^2}\int_{\Delta(z_n,r)}\left(\sum_{j=N}^\infty (1-|b_j|)^{Np}\left(\om_{-\alpha}^\star(b_j)\right)^{\frac{p}{2}}
    \left|\frac{\partial^N K^{\a,\omega}(b_j,\z)}{\partial^N
    b_j}\right|^p\right)\,dA(\z)\\
    &\lesssim\frac{1}{(1-|z_n|)^2}\int_{\Delta(z_n,r)}\left(\sum_{j=N}^\infty (1-|b_j|)^{Np-2}
    \left(\om_{-\alpha}^\star(b_j)\right)^{\frac{p}{2}}\int_{\Delta(b_j,r)}
    \left|\frac{\partial^N K^{\a,\omega}(z,\z)}{\partial^N
    z}\right|^p\,dA(z)\right)\,dA(\z)\\
    &\lesssim\frac{1}{(1-|z_n|)^2}\int_{\Delta(z_n,r)}\left(\sum_{j=N}^\infty \int_{\Delta(b_j,r)}
    \left|\frac{\partial^N K^{\a,\omega}(z,\z)}{\partial^N
    z}\right|^p (1-|z|)^{Np-2}
    \left(\om_{-\alpha}^\star(z)\right)^{\frac{p}{2}}\,dA(z)\right)\,dA(\z)
    \end{split}
\end{equation*}
for a suitably chosen $r\in(0,1)$. Finally, by using that $\{b_j\}$ is uniformly discrete and applying
Corollary~\ref{co:kernelstimate}, we deduce
\begin{equation*}
\begin{split}
&\sum_{j=N}^\infty (1-|b_j|)^{Np}\left(\om_{-\alpha}^\star(b_j)\right)^{\frac{p}{2}}
    \left|\frac{\partial^N K^{\a,\omega}(b_j,\wtz_{j,n})}{\partial^N
    b_j}\right|^p\\
    &\lesssim\frac{1}{(1-|z_n|)^2}\int_{\Delta(z_n,r)}\left(\int_\D\left|\frac{\partial^N K^{\a,\omega}(z,\z)}{\partial^N
    z}\right|^p(1-|z|)^{Np-2}
    \left(\om_{-\alpha}^\star(z)\right)^{\frac{p}{2}}\,dA(z)\right)\,dA(\z)\\
    &\lesssim\frac{1}{(1-|z_n|)^2}\int_{\Delta(z_n,r)}\frac{dA(\z)}{(\om_{-\a}^\star(\z))^\frac{p}{2}}\asymp\frac{1}{(\om^\star_{-\a}(z_n))^\frac{p}{2}}.
\end{split}
\end{equation*}
The inequality \eqref{eq:tmu2} follows, and thus the first
assertion is proved.

Before proving the second part of the assertion, we introduce the necessary notation.
Each $a\in\D\setminus\{0\}$ induces the polar rectangle
    $$
    R(a)=\left\{z\in\D:\,%\left|\frac{z}{|z|}-\frac{a}{|a|}\right|<\pi(1-|a|),
    |\arg a{\overline z}|<\frac{1-|a|}{2},\,\,\,\, |a|\le |z|<\frac{1+|a|}{2}\right\}.
    $$
For convenience, we also denote by $R_{-1}(a)$ the rectangle induced by the point $(2|a|-1)e^{i(\arg a-(1-|a|))}$, and if $|a|<\frac{1}{2}$, we set $R_{-1}(a)=D(0,1/2)$. Further, we denote
    $$
    Q(a)=R(a)\cup R_{-1}(a),\quad a\in\D\setminus\{0\}.
    $$

Let now $\mu$ be a positive Borel measure on $\D$, and assume that
$T_\mu\in \SSS_p(H_\alpha(\om^\star))$, where $0<p<1$. For each
$\e>0$ there exists $M=M(\e)\in\N$ such that $\{z_k\}$ (the sequence which induces the partition $\{R(I):\, I\in\Upsilon\}$) can be divided
into $M$ subsequences $\{z_k^{(j)}\}$, $j=1,\ldots,M$, such that
$\rho(z_n^{(j)},z_k^{(j)})>1-\e$ for all $n\ne k$ and
$j=1,\ldots,M$. Therefore, if $R_k^{(j)}\in\Upsilon$ denotes the
element containing $z_k^{(j)}$, then
$\rho(Q_n^{(j)},Q_k^{(j)})>1-\d(\e)$, for all $n\ne k$ and
$j=1,\ldots,M$, and $\d=\d(\e)\to0$, as $\e\to0$. The choice of
$\e$ will be made later.

Next, we need to do a new partition in order to ensure that the
kernels involved in the proof are essentially constant on certain subrectangles (small regions in the hyperbolic sense).
To do this, divide each $R_k^{(j)}\in\Upsilon$, $k\in\N$,
$j=1,\ldots,M$, into $P^2$ disjoint rectangles $\{R(z_k^{(j,l)})\}_{l=1}^{P^2}$
(of approximately equal size in
the hyperbolic sense) where
$P\in\N$ is sufficiently large so that we may use Lemma~\ref{b6} to deduce
    \begin{equation}\label{kercomp}
    \begin{split}
    \left|\frac{\partial^NK^{\a,\omega}(z_k^{(j,l)},z)}{\partial^N z_k^{(j,l)}}\right|
    &\asymp\left|\left(\frac{\partial^NK^{\a,\omega}(z_k^{(j,l)},z)}{\partial^N z_k^{(j,l)}}\right)\Bigg|_{z=z_k^{(j,l)}}\right|\\
    &\asymp\frac{1}{(1-|z_k^{(j,l)}|)^N\om^\star_{-\a}(z_k^{(j,l)})},\quad
    z\in R(z_k^{(j,l)}),
    \end{split}
    \end{equation}
where the constants of comparison do not depend on $k$, $j$ and $l$.
Moreover, we obviously have
$$
\rho(Q(z_n^{(j,l)}),Q(z_k^{(j,l)}))>1-\d,\quad n\ne k,\quad j=1,\ldots,M,\quad l=1,\ldots,P^2.
$$

Let $\mu_{j,l}=\left(\sum_k\chi_{R(z_k^{(j,l)})}\right)\mu$. We note that
$T_\mu$ and $T_{\mu_{j,l}}$ are both diagonizable as positive
operators and $|T_{\mu_{j,l}}|_p\le|T_\mu|_p$, see
\cite[p.~359]{Lu87} for details. %(see Lemma \ref{le:diag}). or \cite[p.~31]{Zhu}
Fix now indexes $j$ and $l$, and write $\nu=\mu_{j,l}$ and $b_k=z_{k}^{(j,l)}$ for
short. Write also $h_k=h^{N,\om_{-\alpha}^\star}_{b_k}$, and let
$J$ be the operator such that $J(e_j)=0$ if $j<N$ and $J(e_j)=h_j$ if $j\ge N$.
Then, by Lemma~\ref{le:wo}, $J^\star T_\nu
J\in \SSS_p(H_\alpha(\om^\star))$, whenever
$T_\nu\in\SSS_p(H_\alpha(\om^\star))$. Further, $J^\star T_\nu
J=D+E$, where $D$ is the diagonal operator
    $$
    D(f)=\sum_k\langle T_\nu(h_k),h_k\rangle_{H_\alpha(\om^\star)}\langle
    f,e_k\rangle_{H_\alpha(\om^\star)}
    e_k,
    $$
and $E$ is the remainder
    $$
    E(f)=\sum_{n}\sum_{k\ne n}\langle T_\nu(h_k),h_n\rangle_{H_\alpha(\om^\star)}\langle
    f,e_k\rangle_{H_\alpha(\om^\star)}
    e_n.
    $$
Now, bearing in mind \eqref{kercomp}, we deduce
    \begin{equation*}
    \begin{split}
    |D|_p^p&=\sum_k\langle
    T_\nu(h_k),h_k\rangle^p_{H_\alpha(\om^\star)}=\sum_k\left(\int_\D|h_k(z)|^2\,d\nu(z)\right)^p
    \ge\sum_k\left(\int_{R(b_k)}|h_k(z)|^2\,d\nu(z)\right)^p
    \\ &  \ge c_1\sum_k\left((1-|b_k|)^{2N}\om_{-\a}^\star(b_k)\frac{\nu (R(b_k))}{(1-|b_k|)^{2N}(\om_{-\a}^\star(b_k))^2}\right)^p
    = c_1\sum_k\left(\frac{\nu (R(b_k))}{\om_{-\a}^\star(b_k)}\right)^p
    \end{split}
    \end{equation*}
for some constant $c_1>0$ depending only on $\a$, $\om^\star$,
$N$ and $p$, and
where $R(b_k)$ is the square  induced by  $b_k$.

To deal with $E$, we may argue as in the
first part of the proof to obtain
    \begin{equation*}
    \begin{split}
    |E|_p^p&\le\sum_n\sum_{k\ne n}|\langle T_\nu(h_k),h_n\rangle_{H_\alpha(\om^\star)}|^p
    \le \sum_n\sum_{k\ne n}\left(\sum_j\int_{R(b_j)}|h_k(z)||h_n(z)|\,d\nu(z)\right)^p\\
    &=\sum_n\sum_{k\ne n}\Bigg(\sum_j(1-|b_k|)^N(\om_{-\a}^\star(b_k))^\frac12
    (1-|b_n|)^N(\om_{-\a}^\star(b_n))^\frac12\\
    &\quad\cdot\int_{R(b_j)}
    \left|\frac{\partial^N K^{\a,\omega}(z,b_k)}{\partial^N\overline{b}_k}\right|
    \left|\frac{\partial^N
    K^{\a,\omega}(z,b_n)}{\partial^N\overline{b}_n}\right|\,d\nu(z)\Bigg)^p\\
    &\le\sum_j\nu(R(b_j))^p\Bigg(\sum_n\sum_{k\ne n}(1-|b_k|)^{pN}(\om_{-\a}^\star(b_k))^\frac{p}2
    (1-|b_n|)^{pN}(\om_{-\a}^\star(b_n))^\frac{p}2\\
    &\quad\cdot\left|\frac{\partial^N
    K^{\a,\omega}(\widetilde{z}_{k,j},b_k)}{\partial^N\overline{b}_k}\right|^p
    \left|\frac{\partial^N
    K^{\a,\omega}(\widetilde{z}_{n,j},b_n)}{\partial^N\overline{b}_n}\right|^p\Bigg),
    \end{split}
    \end{equation*}
where $|\frac{\partial^N
    K^{\a,\omega}(\widetilde{z}_{k,j},b_k)}{\partial^N\overline{b}_k}|=\max_{z\in \overline{R(b_j)}}|\frac{\partial^N
    K^{\a,\omega}(z,b_k)}{\partial^N\overline{b}_k}|$
     for each $j\in \N\cup\{0\}$.

Next, bearing in mind that $\om^\star_{-\a}$ is regular, by using subharmonicity, we deduce
\begin{equation*}\begin{split}
&\sum_{k}(1-|b_k|)^{pN}(\om_{-\a}^\star(b_k))^\frac{p}2
    \left|\frac{\partial^N
    K^{\a,\omega}(\widetilde{z}_{k,j},b_k)}{\partial^N\overline{b}_k}\right|^p
 \\ &\lesssim \sum_{k} \int_{\Delta(b_k,r)}(1-|u|)^{Np-2}(\om_{-\a}^\star(u))^\frac{p}2
    \left|\frac{\partial^N
    K^{\a,\omega}(\widetilde{z}_{k,j},u)}{\partial^N\overline u}\right|^p\,dA(u).
\end{split}\end{equation*}
Choose now constants $0<s_1<s$ such that $\Delta(z,s_1)\subset \Delta(b_j,s)$ for any $z\in \overline{R(b_j)}$, $j\in\N$.
Then, by using subharmonicity again, we get
\begin{equation*}\begin{split}
&\sum_{k}(1-|b_k|)^{pN}(\om_{-\a}^\star(b_k))^\frac{p}2
    \left|\frac{\partial^N
    K^{\a,\omega}(\widetilde{z}_{k,j},b_k)}{\partial^N\overline{b}_k}\right|^p
% \\ &\lesssim \sum_{k} \int_{\Delta(b_k,r)}   (1-|u|)^{Np-2}(\om_{-\a}^\star(u))^\frac{p}2
%    \left|\frac{\partial^N
%    K^{\a,\omega}(\widetilde{z}_{k,j},u)}{\partial^N\overline u}\right|^p\,dA(u)
     \\ &\lesssim
     \sum_{k} \int_{\Delta(b_k,r)}   (\om_{2N-\frac{4}p-\a}^\star(u))^\frac{p}2
     \left(\frac{1}{(1-|b_j|)^2}\int_{\Delta(b_j,s)} \left|\frac{\partial^N
    K^{\a,\omega}(z,u)}{\partial^N\overline u}\right|^p\,dA(z)\right)
   \,dA(u).
  \end{split}\end{equation*}
Next, we choose $r\in (0,1)$ such that $\Delta(z,r)\subset Q(z)$, $z\in\D\setminus\{0\}$. Since
the sets $\{Q(b_k)\}$ are disjoint, we get
\begin{equation*}\begin{split}
 &\sum_{k}(1-|b_k|)^{pN}(\om_{-\a}^\star(b_k))^\frac{p}2
    \left|\frac{\partial^N
    K^{\a,\omega}(\widetilde{z}_{k,j},b_k)}{\partial^N\overline{b}_k}\right|^p
 \\ &\lesssim
     \ \int_{\cup_{k}Q(b_k)}   (\om_{2N-\frac{4}p-\a}^\star(u))^\frac{p}2
     \left(\frac{1}{(1-|b_j|)^2}\int_{\Delta(b_j,s)} \left|\frac{\partial^N
    K^{\a,\omega}(z,u)}{\partial^N\overline u}\right|^p\,dA(z)\right)
   \,dA(u),
 \end{split}\end{equation*}
 where the constant in the asymptotic inequality depends on $r$, $s_1$ and $s$.
So, by Fubini's theorem the double sum $\sum_n\sum_{k\ne n}$ is dominated by a
positive constant times
    \begin{equation*}
    \begin{split}
    & \iint_{\cup_n\cup_{n\ne k}Q(b_k)\times
Q(b_n)}(\om_{-\a+2N-\frac{4}p}^\star(u))^\frac{p}2(\om_{-\a+2N-\frac{4}p}^\star(v))^\frac{p}2
\\ &\cdot  \left(\frac{1}{(1-|b_j|)^4}\iint_{\Delta(b_j,\delta)\times \Delta(b_j,\delta)} \left|\frac{\partial^N
    K^{\a,\omega}(z,u)}{\partial^N\overline u}\right|^p \left|\frac{\partial^N
    K^{\a,\omega}(\z,v)}{\partial^N\overline v}\right|^p\,dA(z)\,dA(\z)\right)
\,dA(u)\,dA(v)
\\ &\le \iint_{G}(\om_{-\a+2N-\frac{4}p}^\star(u))^\frac{p}2(\om_{-\a+2N-\frac{4}p}^\star(v))^\frac{p}2
\\ &\cdot  \left(\frac{1}{(1-|b_j|)^4}\iint_{\Delta(b_j,\delta)\times \Delta(b_j,\delta)} \left|\frac{\partial^N
    K^{\a,\omega}(z,u)}{\partial^N\overline u}\right|^p \left|\frac{\partial^N
    K^{\a,\omega}(\z,v)}{\partial^N\overline v}\right|^p\,dA(z)\,dA(\z)\right)
\,dA(u)\,dA(v)
\\ & \le \frac{1}{(1-|b_j|)^4}\iint_{\Delta(b_j,\delta)\times \Delta(b_j,\delta)}
\\ & \left[\iint_{G}(\om_{-\a+2N-\frac{4}p}^\star(u))^\frac{p}2(\om_{-\a+2N-\frac{4}p}^\star(v))^\frac{p}2
  \left|\frac{\partial^N
    K^{\a,\omega}(z,u)}{\partial^N\overline u}\right|^p
\left|\frac{\partial^N
    K^{\a,\omega}(\z,v)}{\partial^N\overline v}\right|^p\,dA(u)\,dA(v)\right]
\,dA(z)\,dA(\z),
    \end{split}
    \end{equation*}
where $G=\{(u,v):\rho(u,v)>1-\d\}\supset\cup_n\cup_{k\ne n}Q(b_k)\times
Q(b_n).$
Since we have chosen $N$ sufficiently large such that $(\om_{-\a+2N-\frac{4}p}^\star)^\frac{p}2$ is a regular weight,
Corollary~\ref{co:kernelstimate} shows that the double inner integral
is uniformly bounded by a positive constant times
    $$
    (\om_{-\a}^\star(z))^{-\frac{p}2}
    (\om_{-\a}^\star(\z)^{-\frac{p}2}
    \asymp(\om_{-\a}^\star(b_j))^{-p},
    $$
and hence
    $$
    \sum_n\sum_{k\ne n}(\cdot)=o((\om_{-\a}^\star(b_{j}))^{-p}),\quad
    \d\to0^+.
    $$
It follows that for each $\eta>0$ there exists $M=M(\eta)\in\N$ such that
    $$
    |E|_p^p\le\eta\sum_k\left(\frac{\mu (R(z^{j,l}_k))}{\om_{-\a}^\star(z^{j,l}_k)}\right)^p,\quad
    j=1,\ldots,M.
    $$
By choosing now $M$ big enough, we get $\eta$ small enough for which
    $$
   \sum_k\left(\frac{\mu (R(z^{j,l}_k))}{\om_{-\a}^\star(z^{j,l}_k)}\right)^p\lesssim| J^\star T_{\mu_{j,l}}J|_p^p
   \lesssim | T_{\mu_{j,l}}|_p^p \le|T_\mu|_p^p
    $$
for each $j=1,\ldots,M$ and $l=1,\dots, P^2$. If now
$\mu_j=\left(\sum_k\chi_{R_k^{(j)}}\right)\mu$, then
    \begin{equation*}
    \begin{split}
    \sum_k\left(\frac{\mu_j(R_k)}{\om_{-\a}^\star(z_k)}\right)^p
    &=\sum_k\left(\frac{\sum_{l=1}^{P^2}\mu(R_k^{(j,l)})}{\om_{-\a}^\star(z_k)}\right)^p
    \le\sum_k\sum_{l=1}^{P^2}\left(\frac{\mu(R_k^{(j,l)})}{\om_{-\a}^\star(z_k)}\right)^p\\
    &\asymp\sum_k\sum_{l=1}^{P^2}\left(\frac{\mu(R_k^{(j,l)})}{\om_{-\a}^\star(z_k^{(j,l)})}\right)^p
    \lesssim P^2 |T_\mu|_p^p,\quad j=1,\ldots,M.
    \end{split}
    \end{equation*}
This being true for each $j$, we get the assertion for compactly
supported $\mu$. If $\mu$ has not a compact support, then we may apply this to
$\mu_r=\chi_{D(0,r)}\mu$, and then use standard arguments %(Lemma~\ref{le:diag})
to deduce $|T_{\mu_{_r}}|_p\le|T_\mu|_p$, and finally let $r\to1^-$ to complete the proof.\hfill$\Box$

\section{Bounded and compact composition operators}

For $0\le r<1$, let $n(r,z)=n_\vp(r,z)$ denote the number of preimages of $z$ under $\vp$ in $D(0,r)$,
and define the partial counting functions induced by a radial weight $v$ for $\vp$ by
    $$
    N_{\vp,v^\star}(r,z)=\sum_{\z\in\vp^{-1}(z),|\z|\le r}v^\star\left(\frac{\z}{r}\right),\quad 0<r<1,\quad z\in\D\setminus\{\vp(0)\}.
    $$
The classical Nevanlinna  counting function
    $
    N_\vp(z)=\sum_{\z\in\vp^{-1}(z)}\log\frac{1}{|\z|}
    $
is not subharmonic, but the partial Nevanlinna counting functions
    $
    N_\vp(r,z)=\sum_{\z\in\vp^{-1}(z),|\z|\le r}\log\frac{r}{|\z|}
    $
are in the set $\D\setminus\{\vp(0)\}$. This fact implies the submean value property of $N_\vp$~\cite{ShapiroAnnals87}, and allows us to show that the generalized Nevanlinna counting function is subharmonic on $\D\setminus\{\vp(0)\}$.

\begin{lemma}\label{Nsubharmonic}
Let $\vp$ be an
analytic self-map of $\D$ and $v$ a radial weight. Then
    \begin{equation}\label{eq:formulaN}
    N_{\vp,v^\star}(z)=\int_0^1 N_\vp(s,z)v(s)\,ds=\int_{|\z(z)|}^1\frac{n(r,z)}{r}\widehat{v}(r)\,dr,\quad z\ne\vp(0),
    \end{equation}
where $\z(z)$ is a preimage of $z$ with the minimum modulus. In particular, $N_{\vp,v^\star}$ is subharmonic on $\D\setminus\{\vp(0)\}$.
\end{lemma}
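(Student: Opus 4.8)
The plan is to prove the two integral representations first, by nothing more than interchanging summation and integration, and then to read off subharmonicity from the first representation together with the (cited) subharmonicity of the partial Nevanlinna counting functions $N_\vp(s,\cdot)$. The pointwise majorant $N_{\vp,v^\star}(z)\le v^\star\!\left(\frac{z-\vp(0)}{1-\overline{\vp(0)}z}\right)$ mentioned in the introduction I would \emph{not} use here, since the paper derives it \emph{after} subharmonicity; so the argument must stand on its own.

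First I would establish $N_{\vp,v^\star}(z)=\int_0^1 N_\vp(s,z)v(s)\,ds$. Inserting the definition of $v^\star$ into $N_{\vp,v^\star}(z)=\sum_{\z\in\vp^{-1}(z)}v^\star(\z)$ and applying Tonelli's theorem (all terms are nonnegative, so the interchange is unconditionally justified) moves the summation inside the radial integral. Since $z\ne\vp(0)$, the preimages $\z\in\vp^{-1}(z)$ cannot accumulate in the interior of $\D$, so the sum is locally finite and the manipulation is legitimate; the inner sum, taken over those $\z$ with $|\z|\le s$, is then identified with the partial Nevanlinna counting function $N_\vp(s,z)$, which gives the first identity.

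For the second identity I would write $\log\frac{s}{|\z|}=\int_{|\z|}^{s}\frac{dt}{t}$ and substitute this into $N_\vp(s,z)=\sum_{|\z|\le s}\log\frac{s}{|\z|}$. Interchanging the sum with the $t$-integration and recognizing $\sum_{|\z|\le t}1=n(t,z)$ gives $N_\vp(s,z)=\int_0^{s}\frac{n(t,z)}{t}\,dt=\int_{|\z(z)|}^{s}\frac{n(t,z)}{t}\,dt$, the lower limit being $|\z(z)|$ because $n(t,z)=0$ for $t<|\z(z)|$. Feeding this into the first representation and applying Fubini's theorem to the resulting $(s,t)$-integral, the $s$-integration produces $\int_t^1 v(s)\,ds=\widehat{v}(t)$, leaving $\int_{|\z(z)|}^{1}\frac{n(t,z)}{t}\widehat{v}(t)\,dt$, which is the claimed formula after renaming $t$ as $r$.

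Finally, subharmonicity. For each fixed $s\in(0,1)$ the function $z\mapsto N_\vp(s,z)$ is subharmonic on $\D\setminus\{\vp(0)\}$ by the cited fact. Multiplying its sub-mean value inequality by $v(s)\ge 0$, integrating in $s$, and interchanging the $s$-integral with the circular average by Tonelli shows that $N_{\vp,v^\star}=\int_0^1 N_\vp(s,\cdot)v(s)\,ds$ itself satisfies the sub-mean value inequality on $\D\setminus\{\vp(0)\}$. The remaining, and genuinely delicate, point is to verify the hypotheses of the superposition principle for subharmonic functions, namely that $N_{\vp,v^\star}$ is not identically $+\infty$ and is upper semicontinuous on $\D\setminus\{\vp(0)\}$; I expect this to be the main obstacle. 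Upper semicontinuity I would obtain from a reverse-Fatou argument: each $N_\vp(s,\cdot)$ is upper semicontinuous, and on a small closed disc about any $z_0\ne\vp(0)$ the integrand is dominated by a fixed majorant integrable against $v(s)\,ds$, coming from the local boundedness of the counting functions away from $\vp(0)$. Reverse Fatou then yields $\limsup_{z\to z_0}N_{\vp,v^\star}(z)\le N_{\vp,v^\star}(z_0)$, and together with the sub-mean value inequality this gives subharmonicity on $\D\setminus\{\vp(0)\}$.
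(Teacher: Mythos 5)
Your two integral identities are correct, and they are obtained in essentially the same way as in the paper: you interchange the sum over preimages with the radial integral by Tonelli, whereas the paper works with the partial functions $N_{\vp,v^\star}(r,z)$, integrates by parts twice and lets $r\to1^-$ by monotone convergence --- the same computation in different clothing; your second identity is the paper's Fubini/integration-by-parts step verbatim. Likewise, deriving the sub-mean value inequality by integrating that of $N_\vp(s,\cdot)$ against $v(s)\,ds$ is exactly the paper's argument.

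The gap is in the step you yourself flag as delicate: the dominating function needed both for the reverse Fatou argument (upper semicontinuity) and, even earlier, for finiteness of $N_{\vp,v^\star}$ (a subharmonic function is $[-\infty,\infty)$-valued, so you must know $N_{\vp,v^\star}(z)<\infty$ before the superposition principle applies). You attribute this domination to ``local boundedness of the counting functions away from $\vp(0)$'', but that is precisely the unproved point: nothing in your argument bounds $N_\vp(s,z)$ uniformly in $s\in(0,1)$ and in $z$ near a fixed $z_0$; a priori $N_\vp(s,z)$ could blow up as $s\to1^-$, and then both the finiteness of the integral and the reverse Fatou step collapse. The missing ingredient is the classical Littlewood inequality $N_\vp(z)\le\log\frac{1}{|\vp_z(\vp(0))|}$, cited in the paper from Shapiro's book: since $N_\vp(s,z)\le N_\vp(z)$ for every $s$, it provides a single majorant, constant in $s$ and locally bounded on $\D\setminus\{\vp(0)\}$, hence integrable against $v(s)\,ds$. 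This is exactly how the paper closes the argument --- it combines this bound with the continuity of the partial counting functions and dominated convergence to conclude that $N_{\vp,v^\star}$ is in fact continuous (not merely upper semicontinuous) on $\D\setminus\{\vp(0)\}$. There is no circularity in invoking it: only the generalized bound $N_{\vp,v^\star}(z)\le v^\star\left(\vp_z(\vp(0))\right)$ is proved after this lemma, while the scalar Littlewood inequality for $N_\vp$ is an external classical fact. With that one citation supplied, your proof is complete.
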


\begin{proof}
We borrow the argument of proof from \cite[Proposition~6.6]{ShapiroAnnals87}. For simplicity, write $n(r)=n(r,z)$.
An integration by parts yields
    \begin{equation}\label{111111}
    \begin{split}
    N_{\vp,v\star}(r,z)&=\int_0^r v^\star\left(\frac{t}{r}\right)\,dn(t)=\int_0^r\left(\int_{\frac{t}{r}}^1v(s)\,ds\right)\frac{n(t)}{t}\,dt,\quad z\neq \vp(0),
    \end{split}
    \end{equation}
and similarly,
    \begin{equation}\label{24}
    N_\vp(r,z)=\int_0^r \frac{n(t)}{t}\,dt,\quad z\neq \vp(0).
    \end{equation}
Another integration by parts in \eqref{111111} and \eqref{24} give
    \begin{equation*}
    \begin{split}
    N_{\vp,v^\star}(r,z)=\frac{1}{r}\int_0^r N_\vp(t,z)v\left(\frac{t}{r}\right)\,dt=\int_0^1 N_\vp(rs,z)v(s)\,ds.
    \end{split}
    \end{equation*}
By letting $r\to 1^-$ and using the monotone convergence theorem, the first equality in \eqref{eq:formulaN} follows. Since the partial counting functions $N_\vp(s,z)$ are subharmonic on $\D\setminus\{\vp(0)\}$, it follows from what we just proved that $N_{\vp,v^\star}$ has the submean value property.
Moreover, the partial counting functions are continuous on $\D\setminus\{\vp(0)\}$, % by \cite[Remark p.~391]{ShapiroAnnals87},
and hence Littlewood's inequality \cite[p.~187]{Shapiro93}
    \begin{equation}\label{Eq:LittlewoodInequality}
    N_\vp(z)\le\log\frac{1}{|\vp_z(\vp(0))|},\quad
    \vp_z(u)=\frac{z-u}{1-\overline{z}u},\quad z,u\in\D,
    \end{equation}
the dominated convergence theorem and the first equality in \eqref{eq:formulaN} show that $N_{\vp,v^\star}$ is continuous as well. Consequently, $N_{\vp,v^\star}$ is subharmonic on $\D\setminus\{\vp(0)\}$.

To see the second equality in \eqref{eq:formulaN}, note first that $n(r,z)=0$ for all $r<|\z(z)|$. This and an integration by parts yield
    \begin{equation*}
    \begin{split}
    N_{\vp,v^\star}(z)&=\int_0^1N_\vp(r,z)v(r)\,dr=\int_{|\z(z)|}^1\int_0^r\frac{n(t,z)}{t}\,dt\,v(r)\,dr=\int_{|\z(z)|}^1\frac{n(t,z)}{t}\widehat{v}(t)\,dt,
    \end{split}
    \end{equation*}
and we are done.
\end{proof}

Theorem~\ref{Theorem:introduction-bounded-composition-operators} is contained in the following result.

\begin{theorem}\label{Theorem:bounded-composition-operators}
Let $0<p,q<\infty$, $\omega\in\DD$ and $v$ be a radial weight, and let $\vp$ be an
analytic self-map of $\D$.
\begin{itemize}
\item[\rm(a)] If $p>q$, then the following assertions are equivalent:
\begin{enumerate}
\item[\rm(i)] $C_\vp:A^p_\om\to A^q_v$ is bounded;
\item[\rm(ii)] $C_\vp:A^p_\om\to A^q_v$ is compact;
\item[\rm(iii)] $\displaystyle \int_\D\left(\sup_{z\in\Gamma(\z)}\frac{\int_{\Delta(z,r)}N_{\vp,v^\star}(u)\,dA(u)}{\om(S(z))(1-|z|)^2}\right)^\frac{p}{p-q}\om(\z)\,dA(\z)<\infty$ for any fixed $r\in(0,1)$;
    \item[\rm(iv)] $N\left(\frac{N_{\vp,v^\star}}{\om^\star}\right)\in L^\frac{p}{p-q}_\om$.
\end{enumerate}
\item[\rm(b)] If $q\ge p$, then the following assertions are equivalent:
\begin{enumerate}
\item[\rm(i)] $C_\vp:A^p_\om\to A^q_v$ is bounded;
\item[\rm(ii)] $\displaystyle z\mapsto \frac{\int_{\Delta(z,r)}N_{\vp,v^\star}(\z)\,dA(\z)}{\om(S(z))^\frac{q}{p}(1-|z|)^2}\in L^\infty$ for any fixed $r\in(0,1)$;
\item[\rm(iii)] $\displaystyle z\mapsto\frac{\int_{S(z)}N_{\vp,v^\star}(\z)\,dA(\z)}{\om(S(z))^\frac{q}{p}(1-|z|)^2}\in L^\infty$;
\item[\rm(iv)] $\displaystyle \limsup_{|z|\to1^-}\frac{N_{\vp,v^\star}(z)}{\om^\star(z)^\frac{q}{p}}<\infty$;
\item[\rm(v)] There exists $\eta=\eta(\om)>1$ such that
    $$
    \sup_{a\in\D}\int_\D\left(\frac{1}{\om(S(a))}\left(\frac{1-|a|}{|1-\overline{a}\vp(z)|}\right)^\eta\right)^\frac{q}{p}v(z)\,dA(z)<\infty;
    $$
\item[\rm(vi)] There exists $\eta=\eta(\om)>1$ such that
    $$
    \sup_{a\in\D}
    \int_\D\left(\frac{1}{\omega(S(a))}\frac{(1-|a|)^{\eta}}{|1-\overline{a}z|^{\eta+\frac{2p}{q}}}\right)^\frac{q}{p}N_{\vp,v^\star}(z)\,dA(z)<\infty.
    $$
\end{enumerate}
\item[\rm(c)] If $q\ge p$, then the following assertions are equivalent:
\begin{enumerate}
\item[\rm(i)] $C_\vp:A^p_\om\to A^q_v$ is compact;
\item[\rm(ii)] $\displaystyle \lim_{|z|\to1^-}\frac{\int_{\Delta(z,r)}N_{\vp,v^\star}(\z)\,dA(\z)}{\om(S(z))^\frac{q}{p}(1-|z|)^2}=0$ for any fixed $r\in(0,1)$;
\item[\rm(iii)] $\displaystyle \lim_{|z|\to1^-}\frac{\int_{S(z)}N_{\vp,v^\star}(\z)\,dA(\z)}{\om(S(z))^\frac{q}{p}(1-|z|)^2}=0$;
\item[\rm(iv)] $\displaystyle \lim_{|z|\to1^-}\frac{N_{\vp,v^\star}(z)}{\om^\star(z)^\frac{q}{p}}=0$;
\item[\rm(v)] There exists $\eta=\eta(\om)>1$ such that
    $$
    \lim_{|a|\to1^-}\int_\D\left(\frac{1}{\om(S(a))}\left(\frac{1-|a|}{|1-\overline{a}\vp(z)|}\right)^\eta\right)^\frac{q}{p}v(z)\,dA(z)=0;
    $$
\item[\rm(vi)] There exists $\eta=\eta(\om)>1$ such that
    $$
    \lim_{|a|\to1^-}
    \int_\D\left(\frac{1}{\omega(S(a))}\frac{(1-|a|)^{\eta}}{|1-\overline{a}z|^{\eta+\frac{2p}{q}}}\right)^\frac{q}{p}N_{\vp,v^\star}(z)\,dA(z)=0.
    $$
\end{enumerate}
\end{itemize}
Moreover, the condition {\rm (v)} in both {\rm (b)} and {\rm (c)} characterizes bounded and compact operators $C_\vp:A^p_\om\to A^q_v$, respectively, if $v$ is only assumed to be a weight.
\end{theorem}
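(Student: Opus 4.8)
The plan is to pass through the pullback measure. Since $\|C_\vp f\|_{A^q_v}^q=\int_\D|f(\vp(z))|^q v(z)\,dA(z)=\int_\D|f|^q\,d\mu_\vp$, where $\mu_\vp(E)=\int_{\vp^{-1}(E)}v\,dA$, the operator $C_\vp:A^p_\om\to A^q_v$ is bounded (resp.\ compact) if and only if $\mu_\vp$ is a (resp.\ vanishing) $q$-Carleson measure for $A^p_\om$. I would then feed $\mu_\vp$ into the characterization of $q$-Carleson measures for $A^p_\om$ from \cite{PelRatMathAnn}. This splits into two regimes: for $p>q$ the criterion is the tent-space/maximal-function condition $N(z\mapsto\mu_\vp(\Delta(z,r))/(\om(S(z))(1-|z|)^2))\in L^{p/(p-q)}_\om$, and in this range boundedness and compactness coincide because both reduce to the same integrability; for $q\ge p$ the criterion is the sup-condition $\sup_z\mu_\vp(\Delta(z,r))/\om(S(z))^{q/p}<\infty$ (resp.\ its vanishing version). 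Testing $\mu_\vp$ on the normalized kernel-type functions $(1-|a|)^\eta/|1-\overline a z|^\eta$ rephrases these as the conditions (v) in parts (b) and (c); since this formulation only sees $v\,dA$ through its pullback, it survives when $v$ is merely a weight, which yields the closing addendum.

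The second step is to translate everything from $\mu_\vp$ into the counting function. Here I would establish the two-sided comparison
\[
\mu_\vp(\Delta(z,r))\asymp\frac{1}{(1-|z|)^2}\int_{\Delta(z,r)}N_{\vp,v^\star}(u)\,dA(u)\asymp N_{\vp,v^\star}(z),\qquad |z|\to1^-,
\]
together with $\om(S(z))\asymp\om^\star(z)$ from Lemma~\ref{Lemma:replacement-Lemmas-Memoirs}. The second $\asymp$ is the averaging comparison: the lower bound for the average is exactly the submean value property of $N_{\vp,v^\star}$ furnished by Lemma~\ref{Nsubharmonic}, while the upper bound follows from the monotone integral representation \eqref{eq:formulaN}. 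For the first $\asymp$ I would use the non-univalent change of variables (area formula), which turns $\int_{\Delta(z,r)}N_{\vp,v^\star}\,dA$ into $\int_{\vp^{-1}(\Delta(z,r))}v^\star|\vp'|^2\,dA$, and then compare with $\mu_\vp(\Delta(z,r))=\int_{\vp^{-1}(\Delta(z,r))}v\,dA$ via $v^\star\asymp(1-|\cdot|)^2v$ and the Schwarz--Pick inequality. Feeding these comparisons into the Carleson conditions of the first step converts them into (a)(iii)--(iv), (b)(ii)--(iv) and (c)(ii)--(iv); the passage between the $\Delta(z,r)$- and $S(z)$-versions is the standard equivalence for testing Carleson quantities on pseudohyperbolic discs versus Carleson boxes, and (vi) is obtained from (v) by the same area formula that moves $v\,dA$ to $N_{\vp,v^\star}\,dA$, the shift $\eta\mapsto\eta+\tfrac{2p}{q}$ absorbing the $(1-|z|)^{-2}$ density.

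The main obstacle is the first comparison, and specifically the bound $\mu_\vp(\Delta(z,r))\lesssim N_{\vp,v^\star}(z)$. The opposite inequality is immediate from Schwarz--Pick, but controlling the pullback measure \emph{by} the counting function is delicate where $\vp$ is non-univalent or has critical points: there $\vp^{-1}(\Delta(z,r))$ spreads out and $|\vp'|^2$ degenerates, so a naive pointwise estimate fails. I expect to resolve this as in the treatment of the Nevanlinna counting function, using the submean value property of $N_{\vp,v^\star}$ (Lemma~\ref{Nsubharmonic}) over a slightly enlarged disc together with the Schwarz--Pick control of $1-|\z|$ against $1-|\vp(\z)|$, which compensates the degeneration of $|\vp'|$ near critical points. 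Once this comparison is in place, the three parts of the theorem assemble from the equivalences above: the $p>q$ coincidence of boundedness and compactness and the $q\ge p$ split into sup- and limit-conditions both come directly from the two regimes of the Carleson measure characterization.
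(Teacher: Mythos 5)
Your first step (pullback measure plus the Carleson characterization of \cite{PelRatMathAnn}) matches how the paper obtains the equivalence of (i) and (v), but the architecture collapses at the step you yourself flag as the main obstacle, and the tools you propose cannot close it. The chain of comparisons
\[
\mu_\vp(\Delta(z,r))\asymp\frac{1}{(1-|z|)^2}\int_{\Delta(z,r)}N_{\vp,v^\star}(u)\,dA(u)\asymp N_{\vp,v^\star}(z)
\]
fails in both links. The second link is false pointwise: if $z\notin\vp(\D)$ while $\Delta(z,r)$ meets $\vp(\D)$, the right-hand side vanishes but the average is positive; only the one-sided submean inequality $N_{\vp,v^\star}(z)\lesssim(1-|z|)^{-2}\int_{\Delta(z,r)}N_{\vp,v^\star}\,dA$ of Lemma~\ref{Nsubharmonic} holds (which happens to be the direction needed where you invoke it, but your two-sided framing is wrong). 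The first link is the serious gap. By the area formula, $\int_{\Delta(z,r)}N_{\vp,v^\star}\,dA=\int_{\vp^{-1}(\Delta(z,r))}v^\star(\z)|\vp'(\z)|^2\,dA(\z)$, so the inequality you need, namely $\int_{\vp^{-1}(\Delta(z,r))}v\,dA\lesssim(1-|z|)^{-2}\int_{\vp^{-1}(\Delta(z,r))}v^\star|\vp'|^2\,dA$, requires a lower bound on $|\vp'|$ on essentially all of the preimage. Schwarz--Pick supplies only the upper bound $|\vp'(\z)|(1-|\z|^2)\le1-|\vp(\z)|^2$, i.e.\ it helps in the opposite direction; near critical points of $\vp$ the integrand $v^\star|\vp'|^2$ degenerates while $v$ does not, and no submean-value or local argument repairs this. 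This direction --- pullback (Carleson) measure dominated by the counting function --- is precisely the content of the theorem of Lef\`evre, Li, Queff\'elec and Rodr\'iguez-Piazza (Math.\ Ann.\ 351 (2011)) in the Hardy case, a genuinely deep result, and your auxiliary comparison $v^\star\asymp(1-|\cdot|)^2v$ would in any case require $v$ to be regular, whereas the theorem assumes only that $v$ is radial.

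The paper never proves, and never needs, this comparison. Its route is: since the Carleson condition of \cite{PelRatMathAnn} depends only on the ratio $q/p$, boundedness of $C_\vp:A^p_\om\to A^q_v$ is equivalent to boundedness of $C_\vp:A^{2p/q}_\om\to A^2_v$; for target exponent $2$ the exact identity \eqref{LP1} and the non-univalent change of variables convert $\|C_\vp f\|_{A^2_v}^2$ into $4\int_\D|f'|^2N_{\vp,v^\star}\,dA+v(\D)|f(\vp(0))|^2$, so the problem becomes boundedness of the differentiation operator $f\mapsto f'$ from $A^{2p/q}_\om$ into $L^2(N_{\vp,v^\star}\,dA)$, which is exactly what Theorem~2 of \cite{PelRatMathAnn} characterizes; this yields (iii) in (a) and (ii)--(iii) in (b) directly. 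The counting function thus enters through Stanton's formula applied to the operator, never through a comparison with $\mu_\vp$, and the equivalence of the $\mu_\vp$-type condition (v) with the $N_{\vp,v^\star}$-type conditions is a corollary of the theorem rather than an ingredient of its proof. To salvage your plan you would have to reorganize it in the same circular, one-directional way (Carleson condition $\Rightarrow$ bounded $\Rightarrow$ average condition on $N_{\vp,v^\star}$ $\Rightarrow$ pointwise condition $\Rightarrow$ bounded), so that only the easy implications are used. A secondary point: your assertion that for $p>q$ boundedness and compactness coincide ``because both reduce to the same integrability'' also needs an argument; the paper proves the compactness implication in (a) directly, via an $\e$-splitting estimate resting on \cite{Pavlovic2013}.
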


\begin{proof}
A change of variable and \cite[Theorem~1]{PelRatMathAnn} show that $C_\vp:A^p_\om\to A^q_v$ is bounded if and only if $C_\vp:A^{sp}_\om\to A^{sq}_v$ is bounded for $s>0$. By choosing $s=2/q$ and using \eqref{LP1}, this is in turn equivalent to
    $$
    4\int_\D|f'(\vp(z))|^2|\vp'(z)|^2v^\star(z)\,dA(z)+v(\D)|f(\vp(0))|^2\le C\|f\|_{A^{\frac{2p}{q}}_\om}^{\frac{q}{p}}
    $$
for all $f\in A^{\frac{2p}{q}}_\om$. Another change of variable on the left and \cite[Theorem~2]{PelRatMathAnn}, with $n=1$, now shows that (i) and (iii) in (a), and (i)--(iii) in (b), are equivalent.

With Lemma~\ref{Lemma:replacement-Lemmas-Memoirs} in hand, one readily sees that \cite[Lemma~5.3]{PelRat} holds for $\om\in\DD$. This observation together with \cite[Theorem~1]{PelRatMathAnn} and standard arguments show that (i) and (v) in (b) are equivalent. An application of \eqref{LP1} and a change of variable show that (v) and (vi) in (b) are equivalent. Clearly, (iv) implies (ii) in (b) by  Lemma~\ref{Lemma:replacement-Lemmas-Memoirs}, and the opposite implication follows by Lemma~\ref{Nsubharmonic}.

To prove (c) we first observe that the main argument in the proof of \cite[Theorem~2.1(ii)]{PelRat} combined with \cite[Theorem~1]{PelRatMathAnn} shows that, for $q\ge p$, $\om\in\DD$ and a positive Borel measure on $\D$, the identity operator $I_d:A^p_\om\to L^q(\mu)$ is compact if and only if $\lim_{|I|\to0}\frac{\mu(S(I))}{(\om(S(I)))^\frac{q}{p}}=0$. A reasoning similar to that in the previous paragraph now implies the equivalence of (i) and (v) in (c). A change of variable together with \eqref{LP1} now gives (v)$\Leftrightarrow$(vi), and straightforward calculations show that (iv) implies any of the conditions (ii), (iii) and (vi), see the proof of Theorem~\ref{Thm:EssentialNormBergman} for details of similar calculations. The necessity of (iv) for any of these three conditions to be satisfied follows by Lemma~\ref{Nsubharmonic}.

By Lemma~\ref{Nsubharmonic}, (iii) implies (iv) in (a). To complete the proof it remains to show that $C_\vp:A^p_\om\to A^q_v$ is compact if (iv) in (a) is satisfied. To see this, it suffices to prove that for each uniformly bounded sequence $\{f_n\}$ in $A^p_\om$, that converges to $0$ uniformly on compact subsets of $\D$, we have $\|C_\vp(f_n)\|_{A^q_v}\to0$, as $n\to\infty$. Let $\ep>0$ and choose $r_0\in\left(\frac12,1\right)$ such that $\vp(0)\in
D(0,2r_0-1)$ and
   \begin{equation}\label{6}
   \left(\int_{r_0\le |z|<1}\left(\sup_{z\in\Gamma(\z)}\frac{N_{\vp,v^\star}(z)}{\om(S(z))}\right)^\frac{p}{p-q}\om(\z)\,dA(\z)\right)^{(p-q)/p}<\ep.
   \end{equation}
Further, let $n_0\in\N$ such that $|f_n(z)|<\ep^{1/q}$ for all
$n\ge n_0$ and $z\in\overline{D(0,r_0)}$. Then \cite[Theorem~4.2]{PelRat} and a change of variable show that
    \begin{equation*}
    \begin{split}
    \|C_\vp(f_n)\|_{A^q_{v}}^q
    &\asymp\int_\D\Delta|f_n|^q(z)N_{\vp,v^\star}(z)\,dA(z)+v(\D)|f_n(\vp(0))|^q\\
    &\lesssim\int_\D\left(\int_{\Gamma(\z)}\Delta|f_n|^q(z)N_{\vp,v^\star}(z)\frac{\,dA(z)}{\om(T(z))}\right)\om(\z)\,dA(\z)+\e v(\D)
    \end{split}
    \end{equation*}
for all $n\ge n_0$. Split now the outer integral into two pieces. By the fact $\om(S(z))\asymp \om(T(z))$, H\"older's inequality, \eqref{6} and \cite[Theorem~1.3]{Pavlovic2013},
    \begin{equation}
    \begin{split}\label{th1:1}
 &\int_{\{r_0\le |\z|<1\}}\left(\int_{\Gamma(\z)}\Delta|f_n|^q(z)N_{\vp,v^\star}(z)\frac{\,dA(z)}{\om(T(z))}\right)\om(\z)\,dA(\z)
  \\ & \lesssim \int_{\{r_0\le |\z|<1\}} \left(\sup_{z\in\Gamma(\z)}\frac{N_{\vp,v^\star}(z)}{\om(S(z))}\right)
  \left(\int_{\Gamma(\z)}\Delta|f_n|^q(z)\,dA(z)\right)\om(\z)\,dA(\z)
   \\ & \lesssim \e\left(\int_\D\left(\int_{\Gamma(\z)}\Delta|f_n|^q(z)\,dA(z)\right)^\frac{p}{q}\om(\z)\,dA(\z)\right)^\frac{q}{p}
    \lesssim\e\|f_n\|_{A^p_\om}^q\lesssim\e.
    \end{split}
    \end{equation}
As $|f_n|<\ep^{1/q}$ in $\overline{D(0,r_0)}$ for all $n\ge n_0$, a reasoning similar to that above yields
 \begin{equation*}
    \begin{split}
 &\int_{D(0,r_0)}\left(\int_{\Gamma(\z)}\Delta|f_n|^q(z)N_{\vp,v^\star}(z)\frac{\,dA(z)}{\om(T(z))}\right)\om(\z)\,dA(\z)
 \\ & \lesssim\left(\int_{D(0,r_0)}\left(\sup_{z\in\Gamma(\z)}\frac{N_{\vp,v^\star}(z)}{\om(S(z))}\right)^\frac{p}{p-q}\om(\z)\,dA(\z)\right)^\frac{p-q}{p}
 \\ & \quad\cdot \left(\int_{D(0,r_0)}\left(\int_{\Gamma(\z)}\Delta|f_n|^q(z)\,dA(z)\right)^\frac{p}{q}\om(\z)\,dA(\z)\right)^\frac{q}{p}
 \\ & \lesssim \left(\int_{0}^{r_0}\int_{\T}\left(\int_{\Gamma(e^{i\theta})}\Delta|(f_n)_s|^q(z)\,dA(z)\right)^\frac{p}{q}\,d\theta\,\om(s)\,ds\right)^\frac{q}{p}
  \\ & \lesssim \left(\int_{0}^{r_0} M^p_p(s,f_n)\,\om(s)\,ds\right)^\frac{q}{p}\lesssim \ep,
    \end{split}
    \end{equation*}
which together with \eqref{th1:1} gives $\|C_\vp(f_n)\|_{A^q_v}\to0$, as $n\to\infty$. This completes the proof.
\end{proof}

Each composition operator is bounded on the Hardy and the standard weighted Bergman spaces by Littlewood's subordination principle. We will use the following result to see that the same happens also in $A^p_\om$ when $\om\in\DD$. The lemma also implies that $\om^\star$ is subharmonic in $\D\setminus\{0\}$.

\begin{lemma}\label{LemmaLittlewoodTypeInequality}
Let $\vp$ be an analytic self-map of $\D$ and $\omega$ a radial weight. Then
    $$
    N_{\vp,\om^\star}(z)\le \omega^\star\left(\vp_z(\vp(0))\right),\quad z\in\D\setminus\{\vp(0)\}.
    $$
\end{lemma}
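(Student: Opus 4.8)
The plan is to reduce the statement to a sharp, dilated form of Littlewood's inequality for a single bounded analytic function, and then to integrate it against the radial weight.

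First I would set $g=\vp_z\circ\vp$, where $\vp_z(u)=(z-u)/(1-\overline{z}u)$ as in \eqref{Eq:LittlewoodInequality}. Since $\vp_z$ is a disc automorphism, $g$ is an analytic self-map of $\D$ with $g(0)=\vp_z(\vp(0))=:a$, and the hypothesis $z\neq\vp(0)$ guarantees $a\neq0$. The key structural observation is that the zeros of $g$, counted with multiplicity, are exactly the points of $\vp^{-1}(z)=\{\z_n\}$: indeed $\vp_z(u)=0$ if and only if $u=z$, and $\vp_z$ preserves multiplicities. Hence the partial Nevanlinna counting function satisfies $N_\vp(s,z)=\sum_{|\z_n|<s}\log\frac{s}{|\z_n|}$ for every $s\in(0,1)$.

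Next I would rewrite the definition of $\om^\star$ with a truncation, $\om^\star(\z)=\int_0^1\bigl(\log\tfrac{s}{|\z|}\bigr)^+\om(s)s\,ds$, which is literally the definition since the positive part cuts the integral off at $s=|\z|$. Summing over the preimages and interchanging sum and integral by Tonelli's theorem gives
\[
N_{\vp,\om^\star}(z)=\sum_n\om^\star(\z_n)=\int_0^1 N_\vp(s,z)\,\om(s)s\,ds,
\]
while the same representation at the single point $a$ gives $\om^\star(a)=\int_0^1\bigl(\log\tfrac{s}{|a|}\bigr)^+\om(s)s\,ds$. Thus the lemma will follow at once from the pointwise estimate $N_\vp(s,z)\le\bigl(\log\tfrac{s}{|a|}\bigr)^+$ for all $s\in(0,1)$, integrated against the nonnegative measure $\om(s)s\,ds$.

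The heart of the matter is this pointwise inequality. The classical bound $N_\vp(z)\le\log\frac1{|a|}$ of \eqref{Eq:LittlewoodInequality} will not suffice, because a bound that is constant in $s$ cannot reproduce the $s$-profile carried by $\om^\star(a)$; one genuinely needs the $s$-dependent refinement. To prove it I would fix $s$ and let $\z_1,\dots,\z_k$ be the finitely many zeros of $g$ in $D(0,s)$. If $k=0$ both sides vanish. If $k\ge1$, dividing $g$ by the Blaschke product with these zeros and using $|g|\le1$ with the maximum principle yields $|a|=|g(0)|\le\prod_{j=1}^k|\z_j|$; in particular $|a|\le|\z_1|<s$, so the right-hand side equals $\log\frac{s}{|a|}$, and
\[
\sum_{j=1}^k\log\frac{s}{|\z_j|}=k\log s-\log\prod_{j=1}^k|\z_j|\le k\log s-\log|a|=\log\frac{s}{|a|}+(k-1)\log s\le\log\frac{s}{|a|},
\]
the final step because $(k-1)\log s\le0$. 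The main obstacle is precisely recognizing that the naive Littlewood bound is too crude and extracting this sharp version; once the Jensen/Blaschke estimate $|a|\le\prod_{j=1}^k|\z_j|$ is combined with the harmless factor $(k-1)\log s\le0$, everything else is routine integration. A minor point I would keep in view is that zeros and preimages are counted with multiplicity and that $a\neq0$, so that Jensen's formula applies and all quantities (in particular $N_{\vp,\om^\star}(z)\le\om^\star(a)$) are finite.
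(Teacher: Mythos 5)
Your proof is correct, and it is organized genuinely differently from the paper's. The paper's own argument is a two-liner: it writes $N_{\vp,\om^\star}(z)=\sum_n\om^\star(\z_n)$, bounds this by $\om^\star\left(\prod_n \z_n\right)$, and then applies Littlewood's inequality \eqref{Eq:LittlewoodInequality} together with the fact that $\om^\star$ is decreasing in the modulus. The first of these two inequalities, which the paper attributes simply to ``monotonicity of $\om^\star$'', is really the superadditivity of the convex function $x\mapsto\om^\star(e^{-x})$, which vanishes at $x=0$; the paper does not spell this out. Your proof replaces this pair of global steps by a single pointwise-in-$s$ estimate, namely the truncated Littlewood inequality $N_\vp(s,z)\le\left(\log\frac{s}{|\vp_z(\vp(0))|}\right)^{+}$, proved by dividing $g=\vp_z\circ\vp$ by the finite Blaschke product of its zeros in $D(0,s)$, followed by integration against $\om(s)s\,ds$ via the exact representation $N_{\vp,\om^\star}(z)=\int_0^1 N_\vp(s,z)\,\om(s)s\,ds$ (a version of \eqref{eq:formulaN} in Lemma~\ref{Nsubharmonic}). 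If one unwinds the paper's superadditivity step at the level of the integrand for each fixed $s$, one arrives at precisely your pointwise inequality, so the two proofs rest on the same underlying facts (Jensen/Blaschke plus the structure of $\om^\star$ as an integral of truncated logarithms); the difference is where the work is placed. The paper's version buys brevity, at the cost of a step that is stated but not justified; yours buys self-containedness, uses no property of $\om^\star$ beyond its definition, and establishes strictly sharper intermediate information, since your $s$-dependent inequality recovers \eqref{Eq:LittlewoodInequality} in the limit $s\to1^-$. Your handling of multiplicities, of the case of no preimages in $D(0,s)$, and of the role of the hypothesis $z\ne\vp(0)$ (which guarantees $g(0)\neq0$ so that the Jensen--Blaschke bound applies) is complete.
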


\begin{proof}
By using Littlewood's inequality \eqref{Eq:LittlewoodInequality} and the monotonicity of $\omega^\star$, we obtain
    \begin{equation*}%\label{23}
    \begin{split}
    N_{\vp,\om^\star}(z)&=\sum_{n}\omega^\star(\z_n)
    \le\omega^\star\left(\prod_n \z_n\right)=\omega^\star\left(\exp\left(-\sum_n\log\frac1{|\z_n|}\right)\right)\\
    &\le\omega^\star\left(\vp_z(\vp(0))\right),\quad z\in\D\setminus\{\vp(0)\},
    \end{split}
    \end{equation*}
for any radial weight $\om$.
\end{proof}

If $\omega\in\DD$, Lemma~\ref{LemmaLittlewoodTypeInequality} together with Lemma~\ref{Lemma:replacement-Lemmas-Memoirs} gives
    \begin{eqnarray*}
    \limsup_{|z|\to1^-}\frac{N_{\vp,\om^\star}(z)}{\widehat{\om}(z)(1-|z|)}
    &\lesssim&\limsup_{|z|\to1^-}\frac{(1-|\vp_{\vp(0)}(z)|^2)\widehat{\om}\left(\vp_{\vp(0)}(z)\right)}
    {(1-|z|^2)\widehat{\omega}(z)}\\
    &\lesssim&\limsup_{|z|\to1^-}\left(\frac{(1-|\vp_{\vp(0)}(z)|^2)}{1-|z|^2}\right)^{\b+1}
    \le\left(\frac{1+|\vp(0)|}{1-|\vp(0)|}\right)^{\b+1}.
    \end{eqnarray*}
Therefore there exist constants $\eta=\eta(\omega)>1$ and
$C=C(\eta,\omega)>0$ such that
    \begin{equation}\label{90}
    N_{\vp,\om^\star}(z)\le C\left(\frac{1+|\vp(0)|}{1-|\vp(0)|}\right)^\eta\om^\star(z),\quad |z|\to1^-,
    \end{equation}
for all analytic self-maps $\vp$. In view of Theorem~\ref{Theorem:bounded-composition-operators}(b), this shows that each composition operator is bounded on the Bergman space $A^p_\om$ when $\om\in\DD$. It is worth noticing that the validity of \eqref{90} for all analytic self-maps implies $\om\in\DD$ as is seen by choosing $\vp(z)=(1+z)/2$. The general phenomenon behind this is described in the following result.

\begin{proposition}
Let $\om$ be a radial weight. Then the following conditions are equivalent:
\begin{enumerate}
\item[\rm(i)] $\om\in\DD$;
\item[\rm(ii)] For any analytic self-map $\vp$ of $\D$, there exist constants $M=M(\vp,\om)>0$ and $r_0=r_0(\vp)\in(0,1)$ such that
    $$
    N_{\vp,\om^\star}(z)\le  M\om^\star(z),\quad |z|\ge r_0;
    $$
\item[\rm(iii)] There exists an automorphism $\vp_a(z)=\frac{a-z}{1-\overline{a}z}$ of $\D$ and constants $M=M(a,\om)>0$ and $r_0=r_0(a)\in(0,1)$ such that
    $$
    N_{\vp_{a},\om^\star}(z)\le  M\om^\star(z),\quad |z|\ge r_0.
    $$
\end{enumerate}
\end{proposition}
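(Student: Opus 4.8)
The plan is to prove the cycle (i) $\Rightarrow$ (ii) $\Rightarrow$ (iii) $\Rightarrow$ (i). The first implication is essentially already recorded above: by Lemma~\ref{LemmaLittlewoodTypeInequality} we have $N_{\vp,\om^\star}(z)\le\om^\star(\vp_z(\vp(0)))$, and combining this with Lemma~\ref{Lemma:replacement-Lemmas-Memoirs} exactly as in the derivation of~\eqref{90} gives $N_{\vp,\om^\star}(z)\le C\big(\tfrac{1+|\vp(0)|}{1-|\vp(0)|}\big)^{\eta}\om^\star(z)$ for $|z|$ close to $1$; this is (ii) with $M=C\big(\tfrac{1+|\vp(0)|}{1-|\vp(0)|}\big)^{\eta}$. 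The implication (ii) $\Rightarrow$ (iii) is immediate upon specializing $\vp$ to a nontrivial automorphism $\vp_a$.

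The substance is (iii) $\Rightarrow$ (i). First I would observe that $\vp_a$ is an involution, so each $z$ has the single preimage $\vp_a(z)$, whence $N_{\vp_a,\om^\star}(z)=\om^\star(\vp_a(z))=\om^\star(|\vp_a(z)|)$ by the radiality of $\om^\star$. Since $|\vp_a(z)|=|\vp_{|a|}(e^{-i\arg a}z)|$, after a rotation I may assume $a\in(0,1)$; note $a\neq0$ is needed, since $a=0$ gives the rotation $\vp_0(z)=-z$ and carries no information. Evaluating the hypothesis along $z=r\in(\max\{a,r_0\},1)$ yields $\om^\star\!\big(\tfrac{r-a}{1-ar}\big)\le M\om^\star(r)$, and from $1-\tfrac{r-a}{1-ar}=\tfrac{(1+a)(1-r)}{1-ar}\ge(1+a)(1-r)$ together with the monotonicity of $\om^\star$ I obtain $\om^\star\big(1-\kappa(1-r)\big)\le M\om^\star(r)$ with $\kappa=1+a>1$. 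Writing $g(x)=\om^\star(1-x)$, this reads $g(\kappa x)\le Mg(x)$, and iterating (choosing $k$ with $\kappa^{k}\ge2$) and using that $g$ is increasing produces the doubling property $\om^\star(r)\le M'\om^\star\big(\tfrac{1+r}2\big)$.

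It remains to pass from doubling of $\om^\star$ to $\om\in\DD$. Here I would use the elementary identity $\om^\star(r)=\int_r^1 t^{-1}\widehat{\om}_{[1]}(t)\,dt$ with $\widehat{\om}_{[1]}(t)=\int_t^1 s\om(s)\,ds$, which follows from $\log\tfrac{s}{r}=\int_r^s\tfrac{dt}{t}$ and Fubini; since $t^{-1}\asymp1$ and $\widehat{\om}_{[1]}(t)\asymp\widehat{\om}(t)$ for $t\ge\tfrac12$, this gives $\om^\star(r)\asymp G(r):=\int_r^1\widehat{\om}(t)\,dt$ for every radial weight, so $G$ is doubling. Finally, writing $m=\tfrac{1+r}2$ and $\ell=\tfrac{3r-1}2$, the monotonicity of $\widehat{\om}$ alone yields $\tfrac{1-r}2\widehat{\om}(r)\le\int_\ell^r\widehat{\om}\le G(\ell)$ and $G(m)\le\tfrac{1-r}2\widehat{\om}(m)$; since the boundary-distances $1-\ell=3(1-m)$ differ by the bounded factor $3$, iterating the doubling of $G$ twice gives $G(\ell)\le C\,G(m)$, and chaining the three estimates produces $\widehat{\om}(r)\le C\,\widehat{\om}\big(\tfrac{1+r}2\big)$, that is, $\om\in\DD$ (the range of small $r$ being handled trivially by positivity of $\widehat{\om}$ on compacta).

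The main obstacle is this last step: converting a doubling estimate for the twice-integrated weight $\om^\star$ (equivalently $G$) into one for $\widehat{\om}$ itself. The delicate point is that an upper bound for $\widehat{\om}(r)$—the largest value of a decreasing function on $[r,1)$—cannot be read off directly from an average; the device is to trap $\widehat{\om}(r)$ between integrals over a window to its left and over $[m,1)$, and to compare the two windows through the doubling of $G$, which succeeds precisely because $\widehat{\om}$ is monotone.
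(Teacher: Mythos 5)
Your proof is correct, and while the implications (i)$\Rightarrow$(ii)$\Rightarrow$(iii) coincide with the paper's (both rest on the derivation of \eqref{90}), your treatment of the substantive implication (iii)$\Rightarrow$(i) takes a genuinely different route through the same overall strategy of "scaling inequality plus iteration". Two differences stand out. First, the paper does not work with the automorphism itself: it applies Lemma~\ref{LemmaLittlewoodTypeInequality} a second time to transfer the hypothesis to the affine self-map $\vp(z)=a+(1-|a|)z$, whose pullback of a radius $s$ has boundary distance exactly $\frac{1-s}{1-|a|}$; you keep $\vp_a$ and use only the one-sided bound $1-|\vp_a(r)|\ge(1+a)(1-r)$ together with monotonicity of $\om^\star$, which is all the iteration needs. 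Second, and more substantively, the paper converts from $\om^\star$ to $\widehat{\om}$ \emph{before} iterating: using $1-t\le-\log t\le\frac1t(1-t)$ it sandwiches the hypothesis so as to obtain $\widehat{\om}(y)\lesssim\widehat{\om}(s)$ at a point $y$ with $1-y=\frac{1-s}{(m+1)(1-|a|)}$ and $(m+1)(1-|a|)<1$, and then iterates that $\widehat{\om}$-inequality; you iterate first, obtaining doubling of $\om^\star$ as a clean intermediate statement, and only then convert, via the Fubini identity $\om^\star(r)\asymp\int_r^1\widehat{\om}(t)\,dt$ and the trapping argument with $\ell=\frac{3r-1}{2}$ and $m=\frac{1+r}{2}$. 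This conversion is exactly the step that must be done by hand: one cannot quote Lemma~\ref{Lemma:replacement-Lemmas-Memoirs}(iv), namely $\om^\star(z)\asymp\widehat{\om}(z)(1-|z|)$, since that equivalence presupposes $\om\in\DD$ --- precisely what is being proved --- and both your trapping argument and the paper's log-inequality sandwich correctly avoid this circularity. What your route buys is modularity (the hypothesis forces $\om^\star$ itself to be doubling, and doubling of the primitive of a decreasing function passes to the function, a reusable fact); what the paper's buys is brevity, as the conversion and the scaling inequality come out of a single chain of displayed estimates. Finally, you are right that $a\ne0$ must be read into (iii), since for $a=0$ the condition holds trivially for every radial weight; the paper's proof makes the same tacit assumption, its choice $\frac{m}{m+1}<|a|$ being impossible when $a=0$.
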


\begin{proof} By \eqref{90} it suffices to show that (iii)$\Rightarrow$(i). If (iii) is satisfied,
then Lemma~\ref{LemmaLittlewoodTypeInequality} yields $N_{\vp,\om^\star}(z)\le\om^\star(\vp_{a}(z))\le M\om^\star(z)$ for any analytic self-map of $\D$ such that $\vp(0)=a$. Take $\vp(z)=a+(1-|a|)z$, and choose $m=m(|a|)>0$ such that $\frac{m}{m+1}<|a|$. Then the inequalities $1-t\le-\log t\le\frac1t(1-t)$ give
    \begin{equation*}
    \begin{split}
    \frac{m}{m+1}\frac{1-s}{1-|a|}\widehat{\om}\left(\frac{m+\frac{s-|a|}{1-|a|}}{m+1}\right)
    &\le\int_{\frac{m+\frac{s-|a|}{1-|a|}}{m+1}}^1\left(r-\frac{s-|a|}{1-|a|}\right)\om(r)\,dr\\
    &\le\int_{\frac{s-|a|}{1-|a|}}^1\left(r-\frac{s-|a|}{1-|a|}\right)\om(r)\,dr\\
    &\le\om^\star\left(\frac{s-|a|}{1-|a|}\right)\le M\om^\star(s)
    \le M\frac{1-s}{s}\widehat{\om}(s),\quad s>|a|,
    \end{split}
    \end{equation*}
and hence
    \begin{equation}\label{eq:estimate111}
    \begin{split}
    \widehat{\om}\left(\frac{m+\frac{s-|a|}{1-|a|}}{m+1}\right)
    \le\frac{(1-|a|)(m+1)M}{ms}\widehat{\om}(s),\quad s>\max\{r_0,|a|\}.
    \end{split}
    \end{equation}
Since
    $$
    1-\frac{m+\frac{s-|a|}{1-|a|}}{m+1}=\frac{1-s}{(m+1)(1-|a|)}
    $$
and $(m+1)(1-|a|)<1$, we deduce $\om\in\DD$ by applying \eqref{eq:estimate111} sufficiently many times.
\end{proof}

The next result offers an asymptotic formula for the operator norm of $C_\vp:A^p_\om\to A^p_\om$ when $\om$ belongs to a certain subclass of $\DD$.

\begin{proposition}\label{co:norma}
Let $0<p<\infty$ and $\om\in\DD$, and let $\vp$ be an analytic self-map of~$\D$. Then each $C_\vp:A^p_\om\to A^p_\om$ is bounded and there exists $c=c(\om,p)>0$ such that
    \begin{equation}\label{lower}
    \frac{c}{\widehat{\omega}(\vp(0))(1-|\vp(0)|)}\le\|C_\vp\|_{(A^p_\om\to A^p_\om)}^p.
    \end{equation}
Moreover, there exists $C=C(\om,p)>0$ such that
    $$
    \|C_\vp\|_{(A^p_\om\to A^p_\om)}^p\le\frac{C}{\widehat{\omega}(\vp(0))(1-|\vp(0)|)}
    $$
for all analytic self-maps $\vp$, if and only if, there exists $M=M(\om)>0$ such that
    \begin{equation}\label{extra}
    \widehat{\om}\left(\vp_{t}(r)\right)\le M\frac{\widehat{\om}(t)}{\widehat{\om}(r)},\quad 0\le r\le t<1.
    \end{equation}
\end{proposition}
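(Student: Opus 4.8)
The plan is to base everything on the change-of-variable identity
\[
\|C_\vp f\|_{A^p_\om}^p\asymp\om(\D)|f(\vp(0))|^p+\int_\D\Delta|f|^p(w)\,N_{\vp,\om^\star}(w)\,dA(w),
\]
which is exactly the formula used (with $q=p$ and $v=\om$) in the proof of Theorem~\ref{Theorem:bounded-composition-operators}, and whose comparison constants depend only on $p$ and $\om$; the case $\vp=\mathrm{id}$ recovers the Littlewood--Paley norm $\|f\|_{A^p_\om}^p\asymp\om(\D)|f(0)|^p+\int_\D\Delta|f|^p\,\om^\star\,dA$. Boundedness of $C_\vp$ on $A^p_\om$ has already been recorded via \eqref{90}. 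For the lower bound \eqref{lower} I would use the normalized test functions $g_a(z)=(1-|a|)^{\gamma}(1-\overline{a}z)^{-\gamma}(\widehat{\om}(a)(1-|a|))^{-1/p}$ with $\gamma$ large: the standard $\DD$-integral estimate $\int_\D|1-\overline{a}z|^{-\beta}\om\,dA\asymp\widehat{\om}(a)(1-|a|)^{1-\beta}$ gives $\|g_a\|_{A^p_\om}\asymp1$ and $|g_a(a)|^p\asymp(\widehat{\om}(a)(1-|a|))^{-1}$. Applying the point evaluation $|h(0)|^p\lesssim\|h\|_{A^p_\om}^p$ to $h=C_\vp g_{\vp(0)}$ and using $(C_\vp g_{\vp(0)})(0)=g_{\vp(0)}(\vp(0))$ yields $\|C_\vp\|^p\gtrsim|g_{\vp(0)}(\vp(0))|^p\asymp(\widehat{\om}(\vp(0))(1-|\vp(0)|))^{-1}$, which is \eqref{lower}.

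For the implication \eqref{extra}$\Rightarrow$ upper bound the heart is the pointwise estimate
\[
N_{\vp,\om^\star}(w)\lesssim\frac{\om^\star(w)}{\widehat{\om}(\vp(0))(1-|\vp(0)|)},\qquad w\in\D\setminus\{\vp(0)\}.
\]
By Lemma~\ref{LemmaLittlewoodTypeInequality} and the relation $\om^\star\asymp\widehat{\om}(\cdot)(1-|\cdot|)$ from Lemma~\ref{Lemma:replacement-Lemmas-Memoirs} it suffices to bound $\widehat{\om}(\vp_w(\vp(0)))(1-|\vp_w(\vp(0))|)$, and since $1-|\vp_w(\vp(0))|\asymp(1-|w|)(1-|\vp(0)|)|1-\overline{w}\vp(0)|^{-2}$, the claim reduces to $\widehat{\om}(\vp_w(\vp(0)))\lesssim\widehat{\om}(w)|1-\overline{w}\vp(0)|^{2}\widehat{\om}(\vp(0))^{-1}(1-|\vp(0)|)^{-2}$. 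Writing $b=\vp(0)$, $t=\max\{|w|,|b|\}$ and $r=\min\{|w|,|b|\}$, the elementary inequality $\varrho(w,b)\ge(t-r)/(1-tr)$ together with the monotonicity of $\widehat{\om}$ and \eqref{extra} gives $\widehat{\om}(\vp_w(b))\le M\widehat{\om}(t)/\widehat{\om}(r)$, and a two-line case check ($|w|\ge|b|$ versus $|w|<|b|$) using only $|1-\overline{w}b|\ge1-|b|$ and the monotonicity of $\widehat{\om}$ closes the estimate. Feeding this into the displayed identity, together with $\int_\D\Delta|f|^p\om^\star\,dA\le\|f\|_{A^p_\om}^p$ and the point evaluation $|f(\vp(0))|^p\lesssim\|f\|_{A^p_\om}^p(\widehat{\om}(\vp(0))(1-|\vp(0)|))^{-1}$, produces the desired upper bound for $\|C_\vp\|^p$.

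For the converse I would test the hypothesis only on the automorphisms $\vp=\vp_a$, for which Lemma~\ref{LemmaLittlewoodTypeInequality} becomes an equality and $\vp_a(0)=a$. Applying the point evaluation $|g(z_0)|^p\lesssim\|g\|_{A^p_\om}^p/\om^\star(z_0)$ at $z_0=\vp_a(w_0)$ to $g=C_{\vp_a}g_{w_0}$, and using $(C_{\vp_a}g_{w_0})(\vp_a(w_0))=g_{w_0}(w_0)$, gives $\|C_{\vp_a}\|^p\gtrsim\om^\star(\vp_a(w_0))/\om^\star(w_0)$. Combining with the assumed upper bound $\|C_{\vp_a}\|^p\le C(\widehat{\om}(a)(1-|a|))^{-1}\asymp C\,\om^\star(a)^{-1}$ yields $\om^\star(\vp_a(w_0))\lesssim\om^\star(w_0)/\om^\star(a)$ for all $a,w_0\in\D$. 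Specializing to real $a=r$ and $w_0=t$ with $r\le t$, so that $|\vp_r(t)|=\vp_t(r)$, and converting back via $\om^\star\asymp\widehat{\om}(\cdot)(1-|\cdot|)$, the identity $1-\vp_t(r)=(1-t)(1+r)/(1-tr)$ together with the bound $(1-tr)/(1-r^2)\le1$ (valid because $tr\ge r^2$) leaves exactly \eqref{extra}.

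The main obstacle I expect is the passage, in the forward implication, from the genuinely two-variable quantity $\varrho(w,\vp(0))$ to the one-variable condition \eqref{extra}: this is precisely where the sharp inequality $\varrho(w,b)\ge(|w|-|b|)/(1-|w||b|)$ and the subsequent case analysis are indispensable, and where the numerous conversions between $\om^\star$ and $\widehat{\om}$ must be tracked carefully. A secondary point requiring care is verifying that probing with automorphisms alone already recovers the full strength of \eqref{extra}, which hinges on the equality in Lemma~\ref{LemmaLittlewoodTypeInequality} for $\vp_a$ and on the sharpness of the point evaluation realized by the functions $g_a$.
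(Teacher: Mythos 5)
Your lower bound, the boundedness claim, and the converse implication are essentially sound (the converse, which tests point evaluations against the involutions $\vp_a$ instead of computing the Carleson-type lower bound for $\|C_{\vp_b}\|$ as the paper does, is a legitimately different and rather clean route, modulo the routine remark that Lemma~\ref{Lemma:replacement-Lemmas-Memoirs}(iv) and \eqref{Eq:radial-growth} may only be invoked for arguments of modulus at least $\tfrac12$, the remaining cases of \eqref{extra} being automatic from doubling). The forward implication, however, has a genuine gap: the ``heart'' of your argument, the pointwise estimate
\begin{equation*}
N_{\vp,\om^\star}(w)\lesssim\frac{\om^\star(w)}{\widehat{\om}(\vp(0))(1-|\vp(0)|)},\qquad w\in\D\setminus\{\vp(0)\},
\end{equation*}
is \emph{false} whenever $\vp$ is non-constant and $\vp(0)\ne0$. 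The function $\om^\star$ blows up logarithmically at the origin, since $\om^\star(z)\ge\log\frac{1}{2|z|}\int_{1/2}^1 s\,\om(s)\,ds$ for $|z|<\tfrac12$; consequently, as $w\to\vp(0)$ the set $\vp^{-1}(w)$ contains a point tending to $0$ and $N_{\vp,\om^\star}(w)\to\infty$, while your right-hand side stays bounded. The flaw in your reduction is precisely the use of $\om^\star(z)\asymp\widehat{\om}(z)(1-|z|)$: Lemma~\ref{Lemma:replacement-Lemmas-Memoirs}(iv) asserts this only as $|z|\to1^-$, and the upper bound $\om^\star(z)\lesssim\widehat{\om}(z)(1-|z|)$ fails near $z=0$. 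You apply it at the point $\vp_w(\vp(0))$, which passes through the origin exactly when $w$ passes through $\vp(0)$, so your ``two-line case check'' settles the $\widehat{\om}$-inequality but not the claim it was supposed to prove.

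This is not a removable technicality inside your scheme: the corrected Littlewood majorant $\om^\star(\vp_w(\vp(0)))$ has a logarithmic singularity at $w=\vp(0)$, and integrating it against $\Delta|f|^p$ there cannot be absorbed into $\int_\D\Delta|f|^p\om^\star\,dA\le\|f\|_{A^p_\om}^p$; a separate argument is required on a hyperbolic neighbourhood of $\vp(0)$. One fix: change variables to reduce that piece to $\int_{|u|<1/2}\Delta|f\circ\vp_{\vp(0)}|^p(u)\,\om^\star(u)\,dA(u)$, bound $\om^\star(u)\lesssim\log\frac{3/4}{|u|}$ there, use the Riesz formula to dominate this by $M_p^p(3/4,f\circ\vp_{\vp(0)})\le M_\infty^p(\rho,f)$ with $1-\rho\asymp1-|\vp(0)|$, and conclude with \eqref{Eq:radial-growth} and doubling, which gives exactly the factor $(\widehat{\om}(\vp(0))(1-|\vp(0)|))^{-1}$. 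This is in essence what the paper does, in a way that never meets the singularity: it first applies Littlewood's subordination principle to replace $\vp$ by the automorphism $\vp_{\vp(0)}$, and then invokes Pavlovi\'c's Littlewood--Paley formula, whose weight $\int_{|z|}^1\om(r)\bigl(1-\tfrac{|z|}{r}\bigr)r\,dr$ --- unlike $\om^\star$ --- remains bounded at the origin, so that the subsequent splitting into $I_1+I_2$ only ever requires the hypothesis \eqref{extra} and elementary monotonicity. As written, your forward implication does not go through without an addition of this kind.
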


\begin{proof}
Each $C_\vp$ is bounded on~$A^p_\omega$ if $\om\in\DD$ by Theorem~\ref{Theorem:bounded-composition-operators}(b) and \eqref{90}. If $f\in A^p_{\om}$ and $\om$ is a radial weight, then
    $$
    \|f\|_{A^p_\omega}^p\ge\int_{\D\setminus
    D(0,r)}|f(z)|^p\omega(z)\,dA(z)\gtrsim
    M_p^p(r,f)\widehat{\om}(r),\quad r\ge\frac12,
    $$
and hence the well known relation $M_\infty(f,r)\lesssim
M_p(\frac{1+r}{2},f)(1-r)^{-1/p}$ and the hypotheses yield
    \begin{equation}\label{Eq:radial-growth}
    M_\infty(r,f)\lesssim\|f\|_{A^p_\om}\left(\frac{1}{\widehat{\om}(r)(1-r)}\right)^{1/p},\quad r\ge\frac12.
    \end{equation}
This together with the test functions $f_a(z)=((1-|a|)/(1-\overline{a}z))^\frac{\g+1}{p}$, where $\g=\g(\om)>0$ is sufficiently large, and Lemma~\ref{Lemma:replacement-Lemmas-Memoirs} shows that each point evaluation $L_a(f)=f(a)$ is a bounded
linear functional on $A^p_\om$ with $\|L_a\|^p\asymp(\widehat{\omega}(a)(1-|a|))^{-1}$.%$\|L_a\|_{(A^p_\om)^\star}^p\asymp(\widehat{\omega}(a)(1-|a|))^{-1}$.
Therefore the identity $C_\vp^\star(L_0)=L_{\vp(0)}$ gives
    $$
    \|C_\vp\|_{A^p_\om\to A^p_\om}^p=\|C^\star_\vp\|^p\ge\frac{\|C^\star_\vp(L_0)\|^p}{\|L_0\|^p}
    \asymp\|L_{\vp(0)}\|^p\asymp\frac{1}{\widehat{\omega}(\vp(0))(1-|\vp(0)|)},
    $$
which is the desired lower estimate~\eqref{lower}.

Littlewood's subordination principle shows that $\|f\circ\vp\|_{A^p_\om}\le\|f\circ\vp_{\vp(0)}\|_{A^p_\om}$. Further, by \cite[Theorem~1.3]{Pavlovic2013} and the proof of \cite[Theorem~4.2]{PelRat},
    \begin{equation*}
    \|f\circ\vp_{\vp(0)}\|_{A^p_\om}^p\asymp\int_\D\Delta|f\circ\vp_{\vp(0)}|^p(z)\int_{|z|}^1\om(r)\left(1-\frac{|z|}{r}\right)r\,dr\,dA(z)+|f(\vp(0))|^p,
    \end{equation*}
where the integral over $\D$ on the right is dominated by
    \begin{equation*}
    \left(\int_{D(0,|\vp(0)|)}+\int_{\D\setminus D(0,|\vp(0)|)}\right)\Delta|f|^p(z)(1-|\vp_{\vp(0)}(z)|^2)\int_{|\vp_{\vp(0)}(z)|}^1\om(r)r\,dr\,dA(z)=I_1+I_2.
    \end{equation*}
Now
    \begin{equation*}
    I_1\le\frac{4\int_0^1\om(r)r\,dr}{\widehat{\om}(\vp(0))(1-|\vp(0)|)}\int_\D\Delta|f|^p(z)(1-|z|)\widehat{\om}(z)\,dA(z)
    \lesssim\frac{\|f\|_{A^p_\om}^p}{\widehat{\om}(\vp(0))(1-|\vp(0)|)}
    \end{equation*}
by \cite[Theorem~4.2]{PelRat}, and the hypothesis yields the same upper estimate for $I_2$. Since the point evaluation functional $L_{\vp(0)}$ is bounded on $A^p_\om$ with $\|L_{\vp(0)}\|_{(A^p_\om)^\star}^p\asymp(\widehat{\omega}(\vp(0))(1-|\vp(0)|))^{-1}$ by the first part of the proof, the upper bound follows.

Conversely, take $0<b\le a<1$ and consider $\vp=\vp_b$. The hypothesis together with the proof of Theorem~\ref{Theorem:bounded-composition-operators} show that
    \begin{equation*}
    \begin{split}
    \frac{C}{\widehat{\om}(b)(1-|b|)}&\ge\|C_{\vp_b}\|_{A^p_\om\to A^p_\om}^p\gtrsim\frac{\int_{\Delta(a,r)}N_{\vp_b,\om^\star}(z)\,dA(z)}{\om(S(a))(1-|a|)^2}
    \gtrsim\frac{\int_{\Delta(a,r)}(1-|\vp_b(z)|)\int_{|\vp_b(z)|}^1\om(s)\,ds\,dA(z)}{\widehat{\om}(a)(1-|a|)^3}\\
    &\asymp\frac{(1-|\vp_a(b)|)\int_{|\vp_a(b)|}^1\om(s)\,ds}{\widehat{\om}(a)(1-|a|)}\gtrsim
    \frac{(1-|b|)}{(1-ab)^2}\frac{\widehat{\om}(\vp_a(b))}{\widehat{\om}(a)},
    \end{split}
    \end{equation*}
and \eqref{extra} follows.
\end{proof}

Straightforward calculations show that the standard weights as well as the weights $v_\a(z)=(1-|z|)^{-1}\left(\log\frac1{1-|z|}\right)^{-\a}$ satisfy~\eqref{extra}. Moreover, the choice $t=\frac{1+r}{2}$ shows that each radial weight satisfying \eqref{extra} belongs to $\DD$. The converse is not true in general as is seen by considering the weight $\om(r)=\log\frac{e}{1-r}-1$.

Proposition~\ref{co:norma} should be compared with \cite[Theorem~6.11]{Zhu}. Moreover, it is worth mentioning that there exist radial weights $\om$ and analytic self-maps $\vp$ such that $C_\vp$ is not bounded on $A^p_\omega$~\cite{KrMc}.

Theorem~\ref{Theorem:bounded-composition-operators}(c) suggests that the quantities appearing there are actually asymptotic formulas for the essential norm of a bounded operator $C_\vp: A^p_\om\to A^q_v$. Our next goal is to show that this is indeed the case. If $0<q,p<\infty$ and $T: A^p_\om\to A^q_\om$ is bounded, then the essential norm of $T$ is defined as
    $$
    \|T\|_{e}=\inf_{K}\sup_{\|f\|_{A^p_\om}=1}\|(T-K)(f)\|_{A^q_v}=\inf_{K}\|T-K\|_{(A^p_\om\to A^q_v)},
    $$
where the infimum is taken over all compact operators $K:A^p_\om\to A^q_v$. The key step in the proof of the following result is to show that the essential norm is comparable to the quantity $D$.

\begin{theorem}\label{Thm:EssentialNormBergman}
Let $0<p\le q<\infty$, $\omega\in\DD$ and $v$ be a radial weight. Let $\vp$ be an
analytic self-map of $\D$ such that $C_{\vp}: A^p_\om\to A^q_v$ is bounded. Then there exists $\eta=\eta(\om)>1$ such that, for each fixed $r\in(0,1)$, the following quantities are comparable:
\begin{itemize}
\item[] $\|C_\vp\|^q_e$;
\item[] $\displaystyle A=\limsup_{|z|\to1^-}\frac{\int_{\Delta(z,r)}N_{\vp,v^\star}(\z)\,dA(\z)}{\om(S(z))^\frac{q}{p}(1-|z|)^2}$;
\item[] $\displaystyle B=\limsup_{|z|\to1^-}\frac{\int_{S(z)}N_{\vp,v^\star}(\z)\,dA(\z)}{\om(S(z))^\frac{q}{p}(1-|z|)^2}$;
\item[] $\displaystyle C=\limsup_{|z|\to1^-}\frac{N_{\vp,v^\star}(z)}{\om^\star(z)^\frac{q}{p}}$;
\item[] $\displaystyle
    D=\limsup_{|a|\to1^-}\int_\D\left(\frac{1}{\om(S(a))}\left(\frac{1-|a|}{|1-\overline{a}\vp(z)|}\right)^\eta\right)^\frac{q}{p}v(z)\,dA(z)$;
\item[] $\displaystyle
    E=\limsup_{|a|\to1^-}\int_\D\left(\frac{1}{\omega(S(a))}\frac{(1-|a|)^{\eta}}{|1-\overline{a}z|^{\eta+\frac{2p}{q}}}\right)^\frac{q}{p}N_{\vp,v^\star}(z)\,dA(z)$.
\end{itemize}
\end{theorem}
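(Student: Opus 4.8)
The plan is to route everything through the \emph{pullback measure} $\mu_\vp=(v\,dA)\circ\vp^{-1}$ on $\D$, for which one has the exact identity $\|C_\vp f\|_{A^q_v}^q=\int_\D|f(w)|^q\,d\mu_\vp(w)$ for $f\in A^p_\om$, so that $C_\vp\colon A^p_\om\to A^q_v$ acts, at the level of norms, as the embedding $I_d\colon A^p_\om\to L^q(\mu_\vp)$. I would first dispose of the comparabilities $A\asymp B\asymp C\asymp D\asymp E$: these are precisely the $\limsup$-analogues of the equivalences among conditions (ii)--(vi) in Theorem~\ref{Theorem:bounded-composition-operators}(b),(c), and they are obtained by the same arguments, using $\om(S(z))\asymp\widehat{\om}(z)(1-|z|)\asymp\om^\star(z)$ from Lemma~\ref{Lemma:replacement-Lemmas-Memoirs}, the submean value property of $N_{\vp,v^\star}$ from Lemma~\ref{Nsubharmonic} (which yields $A\asymp B\asymp C$ after averaging over $\Delta(z,r)$ and $S(z)$), the $q$-Carleson measure characterization of \cite{PelRatMathAnn} together with the change of variable used in the proof of Theorem~\ref{Theorem:bounded-composition-operators} (which gives $C\asymp D\asymp E$ with $\eta$ as in \eqref{90}), and finally $\mu_\vp(S(z))\asymp\int_{S(z)}N_{\vp,v^\star}\,dA/(1-|z|)^2$, again from that change of variable. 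Writing $\Lambda=\limsup_{|z|\to1^-}\mu_\vp(S(z))/\om(S(z))^{q/p}$, it then suffices to prove $\|C_\vp\|_e^q\asymp\Lambda$, since $\Lambda\asymp B\asymp A\asymp C$.

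For the lower bound $\|C_\vp\|_e^q\gtrsim\Lambda$ I would test against the normalized functions $g_a=f_a/\|f_a\|_{A^p_\om}$, where $f_a(z)=\big((1-|a|)/(1-\overline{a}z)\big)^{(\gamma+1)/p}$ and $\gamma=\gamma(\om)$ is as in Proposition~\ref{co:norma}, so that $\|g_a\|_{A^p_\om}\asymp1$, $\|f_a\|_{A^p_\om}^p\asymp\om(S(a))$, and $g_a\to0$ uniformly on compacta as $|a|\to1^-$. Every compact $K\colon A^p_\om\to A^q_v$ sends a uniformly bounded, uniformly-on-compacta null sequence to a norm null sequence, so $\|C_\vp-K\|^q\ge\limsup_{|a|\to1^-}\|C_\vp g_a\|_{A^q_v}^q$ for all such $K$, whence $\|C_\vp\|_e^q\ge\limsup_{|a|\to1^-}\|C_\vp g_a\|_{A^q_v}^q$. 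Since $|g_a(w)|^q\asymp\om(S(a))^{-q/p}$ on $S(a)$, the pullback identity gives $\|C_\vp g_a\|_{A^q_v}^q\ge\int_{S(a)}|g_a|^q\,d\mu_\vp\gtrsim\mu_\vp(S(a))/\om(S(a))^{q/p}$, and taking $\limsup$ yields $\|C_\vp\|_e^q\gtrsim\Lambda$.

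The upper bound $\|C_\vp\|_e^q\lesssim\Lambda$ is the main obstacle. Here I would use the dilated maps $t\vp$, $0<t<1$: because $(t\vp)(\D)$ is relatively compact in $\D$, each $C_{t\vp}=C_\vp D_t$ (with $D_tf(z)=f(tz)$) is compact, so $\|C_\vp\|_e^q\le\liminf_{t\to1^-}\|C_\vp-C_{t\vp}\|_{(A^p_\om\to A^q_v)}^q$. For $h=(I-D_t)f$ I would split $\|C_\vp h\|_{A^q_v}^q=\int_\D|h|^q\,d\mu_\vp$ into the contributions of $\{|w|<\rho\}$ and $\{|w|\ge\rho\}$. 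On the central disc $\sup_{|w|\le\rho}|h(w)|\lesssim_\rho(1-t)\|f\|_{A^p_\om}$ by Cauchy estimates, so, $\mu_\vp$ being finite, this part is $O\big((1-t)^q\|f\|_{A^p_\om}^q\big)$ and vanishes as $t\to1^-$ for fixed $\rho$; on the annulus I would apply the $q$-Carleson embedding of \cite{PelRatMathAnn} to $\mu_\vp$ restricted to $\{|w|\ge\rho\}$, whose Carleson constant is comparable to $\sup_{|z|\ge\rho}\mu_\vp(S(z))/\om(S(z))^{q/p}$ by a covering argument resting on the doubling property of $\om$ (Lemma~\ref{Lemma:replacement-Lemmas-Memoirs}) and subadditivity. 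Letting $\rho\to1^-$ sends this tail constant to $\Lambda$, and then $t\to1^-$ gives $\|C_\vp\|_e^q\lesssim\Lambda$.

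The delicate points, and where I expect the real work to lie, are all in this last paragraph: first, making the central estimate uniform over the unit ball of $A^p_\om$ (not merely for a fixed null sequence), which is what allows one to bound the genuine operator norm $\|C_\vp-C_{t\vp}\|$ rather than a sequential quantity; and second, showing that the Carleson constant of the tail of $\mu_\vp$ is controlled by the \emph{small-box} supremum $\sup_{|z|\ge\rho}\mu_\vp(S(z))/\om(S(z))^{q/p}$, so that it genuinely converges to the $\limsup$ defining $\Lambda$ (the large boxes must be handled by covering them with boundary boxes and invoking doubling). Once these are in place, the chain $\|C_\vp\|_e^q\asymp\Lambda\asymp A\asymp B\asymp C\asymp D\asymp E$ closes.
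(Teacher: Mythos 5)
Your lower bound for the essential norm and your dilation argument ($C_{t\vp}=C_\vp D_t$ is compact, the central/annulus splitting, and the restricted-measure Carleson constant controlled by a covering argument, which indeed requires $q/p\ge1$) are sound, and they do yield $\|C_\vp\|_e^q\asymp\Lambda$, where $\Lambda=\limsup_{|z|\to1^-}\mu_\vp(S(z))/\om(S(z))^{q/p}$. This is a genuinely different route to the essential norm than the paper's, which proves $\|C_\vp\|_e^q\asymp D$ by the method of \cite{CharArch12} and then $\|C_\vp\|_e^q\lesssim C$ by an explicit kernel--weight computation. The problem is that $\Lambda$ lives entirely on the pullback-measure side, and your bridge back to the counting-function quantities $A$, $B$, $C$ is broken.

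Concretely, the gap is the assertion that ``$\mu_\vp(S(z))\asymp\int_{S(z)}N_{\vp,v^\star}\,dA/(1-|z|)^2$, again from that change of variable.'' No change of variable gives this: the non-univalent change of variables produces $\int_{\vp^{-1}(G)}|\vp'|^2v^\star\,dA=\int_G N_{\vp,v^\star}\,dA$ for Borel $G$, whereas $\mu_\vp(G)=\int_{\vp^{-1}(G)}v\,dA$ carries neither $|\vp'|^2$ nor $v^\star$; the two measures are related only through \eqref{LP1} tested against analytic functions. Testing with $f_{a,p}$ gives $\mu_\vp(S(a))\lesssim\int_\D\Delta|f_{a,p}|^q\,N_{\vp,v^\star}\,dA+|f_{a,p}(\vp(0))|^qv(\D)$, i.e.\ a \emph{global} kernel integral of $E$-type, not a local box of $N_{\vp,v^\star}$. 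So what your methods actually establish are $\Lambda\lesssim E\asymp D$, and (by Lemma~\ref{Nsubharmonic} and Lemma~\ref{Lemma:replacement-Lemmas-Memoirs}) $A\asymp B\asymp C\lesssim E$ --- all the easy directions. The missing inequality, $\Lambda\lesssim B$ (equivalently $E\lesssim C$), i.e.\ the domination of a global kernel quantity by the local counting-function ratio, is precisely the hard step of the theorem; and it is not supplied by Theorem~\ref{Theorem:bounded-composition-operators}(b),(c) either, since those equivalences are qualitative and the paper proves the relevant implication there by deferring to the computation in this very theorem (the splitting at $r_\e$ and Lemma~\ref{Lemma:replacement-Lemmas-Memoirs}(iii) applied to $\widehat{\om}^{q/p}\in\DD$), so invoking them here is circular. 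The natural repair inside your own scheme is to run the dilation argument against the Littlewood--Paley formulation instead of $\mu_\vp$: estimate $\|(C_\vp-C_{t\vp})f\|_{A^q_v}^q\asymp\int_\D\Delta|(I-D_t)f|^q\,N_{\vp,v^\star}\,dA+\cdots$, bound the outer part by $\sup_{|w|\ge\rho}\bigl(N_{\vp,v^\star}(w)/\om^\star(w)^{q/p}\bigr)\int_\D\Delta|h|^q(\om^\star)^{q/p}\,dA$, and then use the embedding $\int_\D\Delta|h|^q(\om^\star)^{q/p}\,dA\lesssim\|h\|_{A^p_\om}^q$, valid because $(\om^\star)^{q/p}\asymp\tilde v^\star$ for a radial weight $\tilde v\in\DD$ with $\tilde v(S(a))\lesssim\om(S(a))^{q/p}$; this gives $\|C_\vp\|_e^q\lesssim C$ directly and then your easy comparisons close the chain.
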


\begin{proof}
By employing an approach used in~\cite{CharArch12} with \cite[Theorem~9]{PelRatMathAnn}, the proof of \cite[Theorem~2.1(ii)]{PelRat} and standard techniques involving the test functions
    $$
    f_{a,p}(z)=\frac{1}{ \omega(S(a))^{\frac{1}{p}}}\left(\frac{1-|a|^2}{1-\overline{a}z}\right)^{\frac{\eta}{p}},\quad a\in\D,
    $$
and Lemma~\ref{Lemma:replacement-Lemmas-Memoirs}, one deduces $\|C_\vp\|^q_e\asymp D$ for a suitably chosen $\eta=\eta(\om)>1$.

Clearly, $A,B\lesssim E$ while $C\lesssim A,B$ follows by Lemmas~\ref{Nsubharmonic} and \ref{Lemma:replacement-Lemmas-Memoirs}, and \eqref{LP1} and a change of variable give $D\asymp E$. To complete the proof, it remains to establish $\|C_\vp\|^q_e\lesssim C$. To do this, let $\e>0$ and choose $r_\e\in(0,1)$ such that
    $
    \frac{N_{\vp,\om^\star}(z)}{v^\star(z)^\frac{q}{p}}\le C+\e
    $
for all $r_\e\le|z|<1$.
Choose $\eta$ sufficiently large such that $\|C_\vp\|^q_e\asymp E$ and
$\limsup_{|a|\to1^-}\frac{(1-|a|)^{\eta}}{\omega(S(a))}=0$.
Then Theorem~\ref{Theorem:bounded-composition-operators}(b) and Lemma~\ref{Lemma:replacement-Lemmas-Memoirs}(iv) yield
    \begin{equation*}%\label{eq:21}
    \begin{split}
    \|C_\vp\|_e^q&\asymp\limsup_{|a|\to1^-}\frac{(1-|a|)^s}{(\omega(S(a)))^\frac{q}{p}}
    \int_\D\frac{N_{\vp,v^\star}(z)}{|1-\overline{a}z|^{s+2}}\,dA(z),\quad s=\frac{q\eta}{p},\\
    &\lesssim \limsup_{|a|\to1^-}\frac{(1-|a|)^s}{(\omega(S(a)))^\frac{q}{p}}
    \int_0^{r_\e}\frac{(\om^\star(r))^\frac{q}{p}}{(1-|a|r)^{s+1}}\,dr\\
    &\quad+(C+\e)\limsup_{|a|\to1^-}\frac{(1-|a|)^s}{(\omega(S(a)))^\frac{q}{p}}
    \int_{r_\e}^1\frac{(\om^\star(r))^\frac{q}{p}}{(1-|a|r)^{s+1}}\,dr\\
    &\lesssim\limsup_{|a|\to1^-}\frac{(1-|a|)^s}{(\omega(S(a)))^\frac{q}{p}(1-r_\e)^{s+1}}
    \int_0^1(\omega^\star(r))^\frac{q}{p}\,dr\\
    &\quad+(C+\e)\limsup_{|a|\to1^-}\frac{(1-|a|)^s}{(\omega(S(a)))^\frac{q}{p}}
    \int_{r_\e}^1\frac{(\omega^\star(r))^\frac{q}{p}}{(1-|a|r)^{s+1}}\,dr\\ &\lesssim(C+\e)\sup_{a\in\D}\left(\frac{(1-|a|)^{s-\frac{q}{p}}}{\widehat{\omega}(a)^\frac{q}{p}}
    \int_0^1\frac{\widehat{\omega}(r)^\frac{q}{p}}{(1-|a|r)^{s+1-\frac{q}{p}}}\,dr\right).
    \end{split}
    \end{equation*}
Clearly,
    \begin{equation*}
    \begin{split}
    \frac{(1-|a|)^{s-\frac{q}{p}}}{\widehat{\omega}(a)^\frac{q}{p}}
    \int_{|a|}^1\frac{\widehat{\omega}(r)^\frac{q}{p}}{(1-|a|r)^{s+1-\frac{q}{p}}}\,dr
    \le1.
    \end{split}
    \end{equation*}
Moreover, if $\eta$ is sufficiently large, we may apply Lemma~\ref{Lemma:replacement-Lemmas-Memoirs}(iii) to the weight $\widehat{\om}^\frac{q}{p}\in\DD$ to see that
    \begin{equation*}
    \begin{split}
    \frac{(1-|a|)^{s-\frac{q}{p}}}{\widehat{\omega}(a)^\frac{q}{p}}
    \int_0^{|a|}\frac{\widehat{\omega}(r)^\frac{q}{p}}{(1-|a|r)^{s+1-\frac{q}{p}}}\,dr
    &\le\frac{(1-|a|)^{s-\frac{q}{p}}}{\widehat{\omega}(a)^\frac{q}{p}}
    \int_0^{|a|}\frac{\widehat{\omega}(r)^\frac{q}{p}}{(1-r)^{s+1-\frac{q}{p}}}\,dr\lesssim
    \frac{\widehat{\widehat{\om}^\frac{q}{p}}(a)}{\widehat{\omega}(a)^\frac{q}{p}(1-|a|)}\le1.
   \end{split}
    \end{equation*}
It follows that $\|C_\vp\|^q_{e}\lesssim C$.
\end{proof}

We next study the connection between the compactness of $C_\vp:A^p_\omega\to A^p_\omega$, $\om\in\DD$, and the
existence of angular derivative on $\T$. By Theorem~\ref{Theorem:bounded-composition-operators}(c), the compactness of
$C_\vp:A^p_\omega\to A^p_\omega$ does not depend on $p$.

Kriete and MacCluer~\cite{KrMc} done an extensive study in this topic in the case of the Bergman space~$A^2_\om$ induced by a weight $\om$ that decreases to zero faster than any power of $1-|z|$, as $z$ approaches the boundary.

The compactness of $C_\vp$ on the classical weighted Bergman space $A^p_\a$ is known to be closely related to the existence on angular derivatives at $\T$~\cite[Theorem~3.22]{CowenMac95}. The next theorem is a generalization this result.

\begin{theorem}\label{th:compderR}
Let $0<p<\infty$, $\om\in\DD$ such that $\widehat{\om}(r)\ge C\widehat{\om}\left(\frac{1+r}{2}\right)$ for some $C=C(\om)>1$, and $\vp$ be an analytic self-map of $\D$. Then $C_\vp:A^p_\omega\to A^p_\omega$ is compact if and only if $\vp$ has no finite angular derivative at any point of $\T$.
\end{theorem}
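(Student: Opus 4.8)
The plan is to combine the compactness criterion already at our disposal with the Julia--Carath\'eodory description of the angular derivative, and then to compare $N_{\vp,\om^\star}$ with $\om^\star$ by hand. Taking $q=p$ and $v=\om$ in Theorem~\ref{Theorem:bounded-composition-operators}(c), the operator $C_\vp:A^p_\om\to A^p_\om$ (which is bounded by \eqref{90} and Theorem~\ref{Theorem:bounded-composition-operators}(b)) is compact if and only if $\lim_{|z|\to1^-}\frac{N_{\vp,\om^\star}(z)}{\om^\star(z)}=0$. On the other hand, by the Julia--Carath\'eodory theorem $\vp$ has a finite angular derivative at $\z\in\T$ exactly when $\liminf_{z\to\z}\frac{1-|\vp(z)|}{1-|z|}<\infty$; since $\T$ is compact, this shows that $\vp$ has no finite angular derivative at any point of $\T$ if and only if $\lim_{|z|\to1^-}\frac{1-|z|}{1-|\vp(z)|}=0$. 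Hence it suffices to prove that this last condition is equivalent to $\lim_{|z|\to1^-}\frac{N_{\vp,\om^\star}(z)}{\om^\star(z)}=0$, and throughout I would use Lemma~\ref{Lemma:replacement-Lemmas-Memoirs}(iv) in the form $\om^\star(z)\asymp\widehat{\om}(z)(1-|z|)$ as $|z|\to1^-$.

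For the implication that compactness forces the absence of a finite angular derivative I would argue by contraposition. If $\vp$ has a finite angular derivative at some $\z\in\T$, pick $z_n\to\z$ with $\frac{1-|\vp(z_n)|}{1-|z_n|}$ bounded and set $w_n=\vp(z_n)$; then $|w_n|\to1$ and, by the Schwarz--Pick lemma, $1-|z_n|\asymp 1-|w_n|$. Using the first identity in \eqref{eq:formulaN}, retaining only the preimage $z_n$ of $w_n$, and the definition of $\om^\star$, I obtain the clean lower bound
    \begin{equation*}
    N_{\vp,\om^\star}(w_n)\ge\int_{|z_n|}^1\log\frac{s}{|z_n|}\,\om(s)\,ds\ge\int_{|z_n|}^1\log\frac{s}{|z_n|}\,\om(s)\,s\,ds=\om^\star(z_n).
    \end{equation*}
Since $1-|z_n|\asymp1-|w_n|$, Lemma~\ref{Lemma:replacement-Lemmas-Memoirs} gives $\widehat{\om}(z_n)\asymp\widehat{\om}(w_n)$, so that $\frac{N_{\vp,\om^\star}(w_n)}{\om^\star(w_n)}\gtrsim\frac{\om^\star(z_n)}{\om^\star(w_n)}\asymp1$; the quotient does not tend to $0$ and $C_\vp$ is not compact. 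This half uses only $\om\in\DD$.

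The substantial direction is that $\lim_{|z|\to1^-}\frac{1-|z|}{1-|\vp(z)|}=0$ implies $\frac{N_{\vp,\om^\star}(w)}{\om^\star(w)}\to0$, and here the extra hypothesis $\widehat{\om}(r)\ge C\widehat{\om}(\frac{1+r}{2})$ enters decisively. Fix $\e\in(0,1)$ and choose $\rho$ so that $|\z|>\rho$ forces $1-|\z|<\e(1-|\vp(\z)|)$. Put $m_\rho=\max_{|\z|\le\rho}|\vp(\z)|<1$; then for $|w|>m_\rho$ every preimage of $w$ has modulus $>\rho$, hence modulus $>1-\e(1-|w|)$, so $N_\vp(s,w)=0$ for $s\le 1-\e(1-|w|)$. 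Combining the first identity in \eqref{eq:formulaN} with $N_\vp(s,w)\le N_\vp(w)$ and Littlewood's inequality \eqref{Eq:LittlewoodInequality} (which yields $N_\vp(w)\lesssim 1-|w|$ as $|w|\to1^-$), I get
    \begin{equation*}
    N_{\vp,\om^\star}(w)\le N_\vp(w)\int_{1-\e(1-|w|)}^1\om(s)\,ds\lesssim (1-|w|)\,\widehat{\om}\bigl(1-\e(1-|w|)\bigr),
    \end{equation*}
and dividing by $\om^\star(w)\asymp\widehat{\om}(w)(1-|w|)$ leaves the ratio $\frac{\widehat{\om}(1-\e(1-|w|))}{\widehat{\om}(|w|)}$ to be controlled.

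The key step, and the main obstacle, is to show that this ratio is uniformly small for small $\e$: this is precisely where the mere doubling of $\DD$ is insufficient and the reverse estimate $\widehat{\om}(r)\ge C\widehat{\om}(\frac{1+r}{2})$ is used. Iterating the latter gives $\widehat{\om}(1-2^{-k}(1-r))\le C^{-k}\widehat{\om}(r)$, hence a power bound $\widehat{\om}(t)\lesssim\bigl(\frac{1-t}{1-r}\bigr)^{\alpha}\widehat{\om}(r)$ for $r\le t<1$ with $\alpha=\log_2 C>0$. Applying this with $t=1-\e(1-|w|)$ and $r=|w|$ yields $\frac{\widehat{\om}(1-\e(1-|w|))}{\widehat{\om}(|w|)}\lesssim\e^{\alpha}$, so $\frac{N_{\vp,\om^\star}(w)}{\om^\star(w)}\lesssim\e^{\alpha}$ for all $|w|>m_\rho$; letting $\e\to0$ completes this direction and the proof. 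I expect the decisive points to be the localization ensuring that for $|w|>m_\rho$ all preimages are forced into the thin annulus $1-\e(1-|w|)<|\z|<1$, together with the conversion of the reverse doubling into the power bound that turns this thinness into genuine decay of the quotient.
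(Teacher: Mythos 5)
Your proof is correct and follows essentially the same route as the paper's: both directions reduce matters to the criterion $\lim_{|z|\to1^-}N_{\vp,\om^\star}(z)/\om^\star(z)=0$ from Theorem~\ref{Theorem:bounded-composition-operators}(c), invoke the Julia--Carath\'eodory theorem and Littlewood's inequality $N_\vp(w)\lesssim 1-|w|$, and convert the hypothesis $\widehat{\om}(r)\ge C\widehat{\om}\left(\frac{1+r}{2}\right)$ into the power-type bound \eqref{reverse}, used in exactly the same place (only the ``no angular derivative $\Rightarrow$ compact'' direction). The remaining differences are organizational: you argue the first direction by contraposition rather than directly, and in the second direction you use an $\e$-quantified thin-annulus localization of the preimages inside the integral formula \eqref{eq:formulaN}, where the paper instead bounds $N_{\vp,\om^\star}$ by $\widehat{\om}(z(a_n))N_\vp(a_n)$ via the minimum-modulus preimage along a sequence realizing the limsup.
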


\begin{proof}
We provide a proof based on ideas from \cite[Section~3.2]{CowenMac95}. If $C_\vp:A^p_\omega\to A^p_\omega$ is compact, then Theorem~\ref{Theorem:introduction-bounded-composition-operators}(c) and Lemma~\ref{Lemma:replacement-Lemmas-Memoirs} yield
    \begin{equation*}
    \lim_{|a|\to 1^-}\frac{(1-|a|^2)\widehat{\om}(a)}{(1-|\vp(a)|^2)\widehat{\om}(\vp(a))}=0.
    \end{equation*}
In particular, there exists $r_0\in (0,1)$ such that $|\vp(a)|\le |a|$ for all $a\in\D\setminus D(0,r_0)$. Therefore, by Lemma~\ref{Lemma:replacement-Lemmas-Memoirs}, there exists $\beta=\beta(\om)>0$ and $C=C(\beta)$ such that
    \begin{equation*}
    0=\lim_{|a|\to 1^-}\frac{(1-|a|^2)\widehat{\om}(a)}{(1-|\vp(a)|^2)\widehat{\om}(\vp(a))}\ge C \lim_{|a|\to 1^-}
    \left(\frac{1-|a|^2}{1-|\vp(a)|^2}\right)^{1+\beta}.
    \end{equation*}
Consequently, the Julia-Carath\'eodory theorem (\cite[Theorem~2.44]{CowenMac95} or \cite[p.~57]{Shapiro93}) ensures that
$\vp$ has no finite angular derivative at any point of $\T$.

Assume next that $\vp$ has no finite angular derivative at any point of $\T$. For $a\in\vp(\D)$, let $z(a)$ be one of the points in $\vp^{-1}(a)$ with minimum modulus. Now, pick up a sequence $\{a_n\}$ such that $\lim_{n\to\infty}|a_n|=1$ and
    \begin{equation}\label{eq:j2} \limsup_{|a|\to1^-}\frac{N_{\vp,\om^\star}(a)}{\om^\star(a)}=\lim_{n\to\infty}\frac{N_{\vp,\om^\star}(a_n)}{\om^\star(a_n)}.
    \end{equation}
Lemma~\ref{Lemma:replacement-Lemmas-Memoirs} gives
    \begin{equation}\label{2222222}
    \om^\star(z)\asymp\widehat{\om}(z)\log\frac{1}{|z|},\quad \frac{1}{2}\le |z|<1.
    \end{equation}
Hence
    $$
    N_{\vp,\om^\star}(a_n)\asymp \sum_{z\in\vp^{-1}(a_n)}\widehat{\om}(z)\log\frac{1}{|z|}
    \le \widehat{\om}(z(a_n))N_{\vp}(a_n)
    $$
and further,
    \begin{equation}\label{eq:j1}
    \begin{split}
    \lim_{n\to\infty}\frac{N_{\vp,\om^\star}(a_n)}{\om^\star(a_n)}
    &\lesssim \lim_{n\to\infty}\frac{\widehat{\om}(z(a_n))}{\widehat{\om}(a_n)}\frac{N_{\vp}(a_n)}{\log\frac{1}{|a_n|}}
    \lesssim \lim_{n\to\infty}\frac{\widehat{\om}(z(a_n))}{\widehat{\om}(a_n)},
    \end{split}
    \end{equation}
where the last inequality follows by \cite[Corollary on p.~188]{Shapiro93}. On the other hand, since $\vp$ has no finite angular derivative at any point of $\T$, $\lim_{|z|\to 1^-} \frac{1-|z|}{1-|\vp(z)|}=0$. Thus
    \begin{equation}\label{1111111}
    \lim_{n\to\infty}\frac{1-|z(a_n)|}{1-|a_n|}=0,
    \end{equation}
and, in particular, there exists $n_0\in\N$ such that $|z(a_n)|\ge |a_n|$ for all $n\ge n_0$. A reasoning similar to the proof of \cite[Lemma~1]{PRAntequera} (compare with Lemma~\ref{Lemma:replacement-Lemmas-Memoirs}) shows that the hypothesis $\widehat{\om}(r)\ge C\widehat{\om}\left(\frac{1+r}{2}\right)$ for $C>1$ implies the existence of $c=c(\om)>0$ and $\a=\a(\om)>0$ such that
    \begin{equation}\label{reverse}
    \widehat{\om}(r)\ge c\left(\frac{1-r}{1-t}\right)^\a\widehat{\om}(t),\quad 0\le r\le t<1.
    \end{equation}
This together with \eqref{eq:j1} and \eqref{1111111} gives
    \begin{equation*}
    \lim_{n\to\infty}\frac{N_{\vp,\om^\star}(a_n)}{\om^\star(a_n)}
    \lesssim \lim_{n\to\infty}\left(\frac{1-|z(a_n)|}{1-|a_n|}\right)^\alpha=0.
    \end{equation*}
Finally, by \eqref{eq:j2} and Theorem~\ref{Theorem:introduction-bounded-composition-operators}(c), we deduce that $C_\vp:A^p_\omega\to A^p_\omega$ is compact.
\end{proof}

By combining the arguments from the proof of Theorem~\ref{th:compderR} and \cite[Corollary~3.21]{CowenMac95} we obtain the following result.

\begin{corollary}\label{CorAngularDerivativeBoundedValenceI}
Let $0<p<\infty$, $\om\in\DD$ and $\vp$ be a bounded valent
analytic self-map of $\D$. Then $C_\vp:A^p_\omega\to A^p_\omega$
is compact if and only if $\vp$ does not have finite angular
derivative at any point of $\T$.
\end{corollary}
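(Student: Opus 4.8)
The plan is to follow the proof of Theorem~\ref{th:compderR} almost verbatim, replacing the reverse-doubling estimate~\eqref{reverse} by an elementary consequence of bounded valence, exactly in the spirit of the classical argument in~\cite[Corollary~3.21]{CowenMac95}. First I would dispose of the implication ``compact $\Rightarrow$ no finite angular derivative'', which needs neither bounded valence nor the extra hypothesis of Theorem~\ref{th:compderR}. Assuming $C_\vp$ compact, Theorem~\ref{Theorem:introduction-bounded-composition-operators}(c) together with Lemma~\ref{Lemma:replacement-Lemmas-Memoirs} gives $\lim_{|a|\to1^-}\frac{(1-|a|^2)\widehat{\om}(a)}{(1-|\vp(a)|^2)\widehat{\om}(\vp(a))}=0$, whence $|\vp(a)|\le|a|$ for $|a|$ near $1$; a further application of Lemma~\ref{Lemma:replacement-Lemmas-Memoirs} bounds this quantity below by a constant times $\bigl(\frac{1-|a|^2}{1-|\vp(a)|^2}\bigr)^{1+\beta}$, so that $\lim_{|a|\to1^-}\frac{1-|a|}{1-|\vp(a)|}=0$, and the Julia--Carath\'eodory theorem excludes a finite angular derivative at every point of $\T$. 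This is literally the first half of the proof of Theorem~\ref{th:compderR}.

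For the converse, assume $\vp$ has no finite angular derivative at any point of $\T$ and has valence at most $M\in\N$. As before, I would pick a sequence $\{a_n\}$ with $|a_n|\to1^-$ realizing $\limsup_{|a|\to1^-}\frac{N_{\vp,\om^\star}(a)}{\om^\star(a)}$, and for each $n$ take $z(a_n)\in\vp^{-1}(a_n)$ of minimal modulus. The absence of a finite angular derivative at any boundary point is equivalent to $\lim_{|z|\to1^-}\frac{1-|z|}{1-|\vp(z)|}=0$, since the global $\liminf_{|z|\to1^-}\frac{1-|\vp(z)|}{1-|z|}$ coincides with $\inf_{\zeta\in\T}\liminf_{z\to\zeta}\frac{1-|\vp(z)|}{1-|z|}$ and the latter is infinite precisely when no boundary point carries a finite angular derivative. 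Applying this with $z=z(a_n)$ and $\vp(z(a_n))=a_n$ yields~\eqref{1111111}, i.e. $\frac{1-|z(a_n)|}{1-|a_n|}\to0$; in particular $|z(a_n)|\ge|a_n|$ for all large $n$ and $|z(a_n)|\to1^-$.

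The place where bounded valence enters, and where the reverse-doubling hypothesis of Theorem~\ref{th:compderR} becomes unnecessary, is the estimate of the counting function. Since $\om^\star$ is radial and nonincreasing in $|z|$ and $z(a_n)$ has minimal modulus among the at most $M$ preimages of $a_n$, one gets at once $N_{\vp,\om^\star}(a_n)=\sum_{\z\in\vp^{-1}(a_n)}\om^\star(\z)\le M\,\om^\star(z(a_n))$, bypassing both Littlewood's inequality and the step~\eqref{reverse}. Then Lemma~\ref{Lemma:replacement-Lemmas-Memoirs}(iv) gives $\om^\star(z)\asymp\widehat{\om}(z)(1-|z|)$, and, $\widehat{\om}$ being nonincreasing with $|z(a_n)|\ge|a_n|$,
\[
\frac{N_{\vp,\om^\star}(a_n)}{\om^\star(a_n)}
\lesssim\frac{\widehat{\om}(z(a_n))\,(1-|z(a_n)|)}{\widehat{\om}(a_n)\,(1-|a_n|)}
\le\frac{1-|z(a_n)|}{1-|a_n|}\to0.
\]
Hence $\limsup_{|a|\to1^-}\frac{N_{\vp,\om^\star}(a)}{\om^\star(a)}=0$, and Theorem~\ref{Theorem:introduction-bounded-composition-operators}(c) shows that $C_\vp:A^p_\om\to A^p_\om$ is compact.

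The main obstacle I anticipate is not the final chain of inequalities, which is routine, but justifying cleanly the equivalence between the absence of a finite angular derivative at every boundary point and the global limit $\lim_{|z|\to1^-}\frac{1-|z|}{1-|\vp(z)|}=0$; this rests on the Julia--Carath\'eodory theorem combined with a normal-families/compactness argument identifying the global $\liminf$ with an infimum of boundary $\liminf$'s, which is exactly the mechanism behind~\cite[Corollary~3.21]{CowenMac95}. The sole genuine use of bounded valence is the uniform bound $N_{\vp,\om^\star}\le M\,\om^\star(z(\cdot))$ on the preimages; for maps of unbounded valence this estimate fails and one must instead impose the reverse-doubling control on the weight as in Theorem~\ref{th:compderR}.
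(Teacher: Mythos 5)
Your proposal is correct and is essentially the proof the paper intends: the paper proves this corollary by "combining the arguments from the proof of Theorem~\ref{th:compderR} and \cite[Corollary~3.21]{CowenMac95}", which is exactly what you do — the first direction is the first half of Theorem~\ref{th:compderR} verbatim, and in the converse the bounded-valence bound $N_{\vp,\om^\star}(a_n)\le M\,\om^\star(z(a_n))$, together with Lemma~\ref{Lemma:replacement-Lemmas-Memoirs}(iv) and the monotonicity of $\widehat{\om}$, replaces the Littlewood-inequality-plus-reverse-doubling step \eqref{reverse} that required the extra hypothesis there. Your closing remark also correctly identifies the only place bounded valence is used and why the argument breaks down without it.
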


Another immediate consequence of the proof of Theorem~\ref{th:compderR} is a sufficient condition for $C_\vp:A^p_\omega\to A^p_\omega$ to be compact.

\begin{corollary}\label{CorCompactI}
Let $0<p<\infty$, $\om\in\DD$ and $\vp$ be an analytic self-map of $\D$. If
    \begin{equation}\label{113}
    \lim_{|z|\to1}\frac{\widehat{\om}(z)}{\widehat{\om}(\vp(z))}=0,
    \end{equation}
then $C_\vp:A^p_\omega\to A^p_\omega$ is compact.
\end{corollary}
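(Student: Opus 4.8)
The plan is to reduce the assertion to the counting-function criterion for compactness and then reuse the estimates developed in the proof of Theorem~\ref{th:compderR}. Taking $q=p$ and $v=\om$ in Theorem~\ref{Theorem:introduction-bounded-composition-operators}(c), compactness of $C_\vp:A^p_\om\to A^p_\om$ is equivalent to
    $$
    \lim_{|z|\to1^-}\frac{N_{\vp,\om^\star}(z)}{\om^\star(z)}=0,
    $$
so it suffices to derive this limit from the hypothesis~\eqref{113}.

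First I would fix, for each $a\in\vp(\D)$, a preimage $z(a)\in\vp^{-1}(a)$ of minimal modulus. Arguing exactly as in the proof of Theorem~\ref{th:compderR}, the relation $\om^\star(z)\asymp\widehat{\om}(z)\log\frac1{|z|}$ valid near $\T$ (Lemma~\ref{Lemma:replacement-Lemmas-Memoirs}), together with the monotonicity of $\widehat{\om}$ and $N_\vp(a)=\sum_{z\in\vp^{-1}(a)}\log\frac1{|z|}$, gives
    $$
    N_{\vp,\om^\star}(a)\asymp\sum_{z\in\vp^{-1}(a)}\widehat{\om}(z)\log\frac1{|z|}\le\widehat{\om}(z(a))N_\vp(a).
    $$
Dividing by $\om^\star(a)\asymp\widehat{\om}(a)\log\frac1{|a|}$ and invoking \cite[Corollary on p.~188]{Shapiro93}, which bounds $N_\vp(a)/\log\frac1{|a|}$ as $|a|\to1^-$, yields
    $$
    \frac{N_{\vp,\om^\star}(a)}{\om^\star(a)}\lesssim\frac{\widehat{\om}(z(a))}{\widehat{\om}(a)},\quad |a|\to1^-.
    $$

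The decisive observation is that $\vp(z(a))=a$, so the right-hand side equals $\widehat{\om}(z(a))/\widehat{\om}(\vp(z(a)))$, which is precisely the ratio appearing in~\eqref{113} evaluated at the point $z(a)$. Here, in contrast to Theorem~\ref{th:compderR}, no reverse doubling of $\om$ is needed: the hypothesis~\eqref{113} supplies the smallness directly. The one point requiring care, which I expect to be the main (though still routine) obstacle, is to verify that $|z(a)|\to1^-$ as $|a|\to1^-$; this holds because $\vp$ maps each compact subset of $\D$ into a compact subset, so a preimage of $a$ cannot remain bounded away from $\T$ while $|a|\to1^-$. Granting this, \eqref{113} forces $\widehat{\om}(z(a))/\widehat{\om}(\vp(z(a)))\to0$, hence $N_{\vp,\om^\star}(a)/\om^\star(a)\to0$, and the compactness of $C_\vp$ follows from Theorem~\ref{Theorem:introduction-bounded-composition-operators}(c).
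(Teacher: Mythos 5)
Your proposal is correct and follows essentially the same route as the paper: the paper presents this corollary as an immediate consequence of the proof of Theorem~\ref{th:compderR}, namely the chain of estimates \eqref{2222222}--\eqref{eq:j1} bounding $N_{\vp,\om^\star}(a)/\om^\star(a)$ by $\widehat{\om}(z(a))/\widehat{\om}(a)=\widehat{\om}(z(a))/\widehat{\om}(\vp(z(a)))$, after which \eqref{113} (rather than the angular-derivative and reverse-doubling hypotheses) supplies the smallness. Your extra verification that $|z(a)|\to1^-$ via compactness of $\vp$-images of compact sets is the right way to justify applying \eqref{113} at the points $z(a)$.
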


Consider the univalent lens map $\vp(z)=1-(1-z)^\g$, where $0<\g<1$. Clearly,
    $$
    \frac{1-|z|^2}{1-|\vp(z)|^2}\asymp\frac{1-|z|}{|1-\vp(z)|}=\frac{1-|z|}{|1-z|^\gamma}\to0,\quad|z|\to1^-,
    $$
and therefore $C_\vp:A^p_\omega\to A^p_\omega$ is compact for all $0<p<\infty$ and $\om\in\DD$ by Corollary~\ref{CorAngularDerivativeBoundedValenceI}. However, for each $\a>1$ we have
    $$
    \frac{\widehat{v_\a}(r)}{\widehat{v_\a}(\vp(r))}\asymp\left(\frac{\log\frac{e}{1-\vp(r)}}{\log\frac{e}{1-r}}\right)^{\a-1}\to\g^{\a-1}>0,\quad r\to1^-,
    $$
and hence \eqref{113} fails. This shows that \eqref{113} does not characterize compact operators $C_\vp:A^p_\omega\to A^p_\omega$ when $\om\in\DD$. The last lemma clarifies the general situation with regard to \eqref{113}. The proof follows by the Julia-Carath\'eodory theorem, Lemma~\ref{Lemma:replacement-Lemmas-Memoirs} and \eqref{2222222}.

\begin{lemma}
Let $\vp$ be an analytic self-map of $\D$ and $\om\in\DD$. Consider the following assertions:
\begin{itemize}
\item[\rm(i)] $\vp$ does not have finite angular
derivative at any point of $\T$;
\item[\rm(ii)] $\displaystyle\lim_{|z|\to1}\frac{\widehat{\om}(z)}{\widehat{\om}(\vp(z))}=0$;
\item[\rm(iii)] $\displaystyle\lim_{|z|\to1}\frac{\om^\star(z)}{\om^\star(\vp(z))}=0$.
\end{itemize}
Then {\rm(ii)}$\Rightarrow${\rm(i)}$\Leftrightarrow${\rm(iii)}. If in addition $\widehat{\om}(r)\ge C\widehat{\om}\left(\frac{1+r}{2}\right)$ for some $C=C(\om)>1$, then also \rm(i)$\Rightarrow$\rm(ii).
\end{lemma}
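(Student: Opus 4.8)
The plan is to reduce every implication to the single geometric quantity $\frac{1-|z|}{1-|\vp(z)|}$ together with the doubling estimates for $\widehat\om$. First I would record the consequence of the Julia-Carath\'eodory theorem already exploited in the proof of Theorem~\ref{th:compderR}, namely that (i) is equivalent to $\lim_{|z|\to1^-}\frac{1-|z|}{1-|\vp(z)|}=0$. Indeed, $\vp$ has a finite angular derivative at $\zeta\in\T$ exactly when $\liminf_{z\to\zeta}\frac{1-|\vp(z)|}{1-|z|}<\infty$, and a compactness/subsequence argument upgrades the pointwise negation of this to the stated uniform limit. I would keep this reformulation in hand throughout, and I would also record that $\om^\star(z)\to0$ as $|z|\to1^-$, which follows from Lemma~\ref{Lemma:replacement-Lemmas-Memoirs}(iv).

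For (ii)$\Rightarrow$(i) I would first observe that (ii) forces $|\vp(z)|<|z|$ for $|z|$ close to $1$: if $|\vp(z)|\ge|z|$ then monotonicity of $\widehat\om$ gives $\widehat\om(\vp(z))\le\widehat\om(z)$, hence $\widehat\om(z)/\widehat\om(\vp(z))\ge1$, contradicting (ii). Once $|\vp(z)|\le|z|$, I would apply Lemma~\ref{Lemma:replacement-Lemmas-Memoirs}(ii) with $r=|\vp(z)|$, $t=|z|$ to get $\widehat\om(\vp(z))\le C\big(\tfrac{1-|\vp(z)|}{1-|z|}\big)^\b\widehat\om(z)$, that is $\big(\tfrac{1-|z|}{1-|\vp(z)|}\big)^\b\le C\,\widehat\om(z)/\widehat\om(\vp(z))\to0$, giving (i). The implication (i)$\Rightarrow$(ii) under the extra hypothesis is the mirror image: the reverse doubling $\widehat\om(r)\ge C\widehat\om(\frac{1+r}{2})$ yields, exactly as in \eqref{reverse}, a lower bound $\widehat\om(r)\ge c\big(\tfrac{1-r}{1-t}\big)^\a\widehat\om(t)$ for $0\le r\le t<1$; inserting $r=|\vp(z)|\le t=|z|$ (again $|\vp(z)|<|z|$ near the boundary, now by (i)) gives $\widehat\om(z)/\widehat\om(\vp(z))\le c^{-1}\big(\tfrac{1-|z|}{1-|\vp(z)|}\big)^\a\to0$.

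For (i)$\Leftrightarrow$(iii) I would pass from $\om^\star$ to $\widehat\om$ via \eqref{2222222} and Lemma~\ref{Lemma:replacement-Lemmas-Memoirs}(iv), writing $\om^\star(z)\asymp\widehat\om(z)(1-|z|)$ near $\T$. To prove (iii)$\Rightarrow$(i) I would argue the contrapositive: if $\vp$ has a finite angular derivative at some $\zeta$, there is a sequence $z_n\to\zeta$ with $|z_n|\to1$, $|\vp(z_n)|\to1$ and $\tfrac{1-|z_n|}{1-|\vp(z_n)|}$ bounded away from $0$; then $1-|z_n|\asymp1-|\vp(z_n)|$ forces $\widehat\om(z_n)\asymp\widehat\om(\vp(z_n))$ (doubling and monotonicity), so $\om^\star(z_n)/\om^\star(\vp(z_n))\asymp\tfrac{1-|z_n|}{1-|\vp(z_n)|}\not\to0$ and (iii) fails. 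For (i)$\Rightarrow$(iii) I would take an arbitrary sequence with $|z_n|\to1$ and split into $\liminf_n(1-|\vp(z_n)|)>0$ and $1-|\vp(z_n)|\to0$: in the first case $\om^\star(\vp(z_n))$ is bounded below while $\om^\star(z_n)\to0$; in the second the ratio is comparable to $\tfrac{\widehat\om(z_n)}{\widehat\om(\vp(z_n))}\cdot\tfrac{1-|z_n|}{1-|\vp(z_n)|}\le\tfrac{1-|z_n|}{1-|\vp(z_n)|}\to0$, where $\widehat\om(z_n)\le\widehat\om(\vp(z_n))$ because $|\vp(z_n)|<|z_n|$ by (i).

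The doubling manipulations are routine; the only point that demands care is the interface between the boundary asymptotics and the origin. Concretely, \eqref{2222222} and Lemma~\ref{Lemma:replacement-Lemmas-Memoirs}(iv) hold only near $\T$, so in (i)$\Rightarrow$(iii) the case in which $\vp(z_n)$ stays away from $\T$ must be treated separately, using the elementary fact that $\om^\star$ is bounded below by a positive constant on each set $\{0<|w|\le1-\delta\}$. I expect this case distinction, together with carefully tracking which of $|z|$, $|\vp(z)|$ is the larger modulus so that the monotonicity of $\widehat\om$ points the right way, to be the main (though still modest) obstacle.
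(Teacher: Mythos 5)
Your proposal is correct and takes essentially the same route the paper intends: the paper's proof of this lemma is precisely the combination of the Julia-Carath\'eodory theorem, Lemma~\ref{Lemma:replacement-Lemmas-Memoirs} and \eqref{2222222} (plus the reverse doubling estimate \eqref{reverse} for the final implication), which is exactly what you carry out in detail via the reduction to the ratio $\frac{1-|z|}{1-|\vp(z)|}$. The two points of care you flag — tracking which of $|z|$, $|\vp(z)|$ is larger so the monotonicity and doubling inequalities point the right way, and treating separately the case where $\vp(z_n)$ stays away from $\T$ — are handled correctly.
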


\section{Schatten class composition operators on Bergman spaces}

We begin with briefly discussing the Hilbert-Schmidt composition operators on~$A^2_\omega$. By \eqref{LP1}, $C_\vp\in\SSS_2(A^2_\omega)$ if and
only if
    \begin{equation*}
    \begin{split}
    \sum_{n=1}^\infty\left\|C_\vp\left(\frac{z^n}{\sqrt{2\om_n}}\right)\right\|^2_{A^2_\om}
    &\asymp\int_\D\left(\sum_{n=1}^\infty\frac{n^2|z|^{2n-2}}{\omega_n}\right)N_{\vp,\omega^\star}(z)\,dA(z)\asymp\int_\D\|(B_{z}^\omega)'\|_{A^2_\omega}^2N_{\vp,\omega^\star}(z)\,dA(z)<\infty.
    \end{split}
    \end{equation*}
Now \cite[Theorem~1]{PelRatproj} and Lemma~\ref{Lemma:replacement-Lemmas-Memoirs} imply
    \begin{equation*}
  \|(B_{z}^\omega)'\|_{A^2_\omega}^2
  \asymp\int_{0}^{|z|} \frac{dt}{\widehat{\om}(t)(1-t)^4}
  \asymp\frac{1}{\widehat{\om}(z)(1-|z|)^3}
  \asymp\frac{1}{(1-|z|)^2\omega^\star(z)},\quad|z|\to 1^-,
    \end{equation*}
and hence $C_\vp\in\SSS_2(A^2_\omega)$ if and
only if
    \begin{equation}\label{Eq:Hilbert-Schmidt2}
    \int_\D\frac{N_{\vp,\omega^\star}(z)}{\omega^\star(z)}\frac{dA(z)}{(1-|z|)^2}<\infty.
    \end{equation}
Theorem~\ref{Thm:intro-SchattenMain} states that composition operators in Schatten
$p$-classes are neatly characterized by a tidy condition
similar to \eqref{Eq:Hilbert-Schmidt2}. The purpose of this
section is to prove this theorem. The argument relies on Theorem~\ref{th:tmuextended}, Lemma~\ref{Nsubharmonic} and the following auxiliary result which allows us to discretice the condition \eqref{37}.

\begin{lemma}\label{le:equiv}
Let $0<p<\infty$, $\om\in\DD$ and $u$ a positive and subharmonic function on $\D$. Then
\begin{equation}\label{37u}
  \int_\D\left(\frac{u(z)}{\omega^\star(z)}\right)^p\frac{dA(z)}{(1-|z|)^2}<\infty
    \end{equation}
if and only if
\begin{equation}\label{37dis}
    \sum_{R_j\in\Upsilon}\left(\frac{\int_{R_j}u(z)\,dA(z)}{\omega^\star(z_j)(1-|z_j|)^2}\right)^p<\infty.
    \end{equation}
\end{lemma}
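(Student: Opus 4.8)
The plan is to pass from the integral in \eqref{37u} and the sum in \eqref{37dis} to two weighted series over the dyadic family $\Upsilon$ and to show that these series are comparable, the only genuinely analytic input being a reverse H\"older inequality for nonnegative subharmonic functions. First I would discretize. Since $\om\in\DD$, Lemma~\ref{Lemma:replacement-Lemmas-Memoirs}(iv) gives $\om^\star(z)\asymp\widehat{\om}(z)(1-|z|)$, and together with the doubling of $\widehat{\om}$ this shows that $\om^\star(z)\asymp\om^\star(z_j)$ and $1-|z|\asymp1-|z_j|$ whenever $z\in R_j$. As $A(R_j)\asymp(1-|z_j|)^2$, splitting $\D=\bigcup_jR_j$ yields
\[
\int_\D\left(\frac{u(z)}{\om^\star(z)}\right)^p\frac{dA(z)}{(1-|z|)^2}\asymp\sum_{R_j\in\Upsilon}\frac{1}{\om^\star(z_j)^p(1-|z_j|)^2}\int_{R_j}u^p\,dA=:S_1,
\]
while \eqref{37dis} is exactly $S_2:=\sum_j\big(\om^\star(z_j)(1-|z_j|)^2\big)^{-p}\big(\int_{R_j}u\,dA\big)^p$. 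Thus it suffices to prove $S_1\asymp S_2$.

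Next I would introduce room. There are fixed radii $0<s<t<t'<1$, independent of $j$, with $\Delta(z_j,s)\subset R_j\subset\Delta(z_j,t)\subset\Delta(z_j,t')$, and a neighbourhood $R_j^\ast=\bigcup\{R_i:R_i\cap\Delta(z_j,t')\ne\emptyset\}$ which contains $\Delta(z_j,t')$, consists of at most $N_0=N_0(t')$ rectangles, and enjoys the bounded overlap property that each $R_i$ lies in at most $N_0$ of the sets $R_j^\ast$; moreover $\om^\star(z_i)\asymp\om^\star(z_j)$ and $1-|z_i|\asymp1-|z_j|$ whenever $R_i\subset R_j^\ast$. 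The two local estimates I would establish are
\[
\int_{R_j}u^p\,dA\lesssim(1-|z_j|)^{2-2p}\left(\int_{R_j^\ast}u\,dA\right)^p,\qquad\left(\int_{R_j}u\,dA\right)^p\lesssim(1-|z_j|)^{2p-2}\int_{R_j^\ast}u^p\,dA,
\]
each obtained by passing to the concentric discs $\Delta(z_j,t)\subset\Delta(z_j,t')\subset R_j^\ast$ of comparable area. In each line one inequality is the plain Jensen/H\"older inequality for a probability average, while the other is the reverse H\"older estimate $\big(|\Delta|^{-1}\int_\Delta u^b\big)^{1/b}\lesssim\big(|\Delta'|^{-1}\int_{\Delta'}u^a\big)^{1/a}$, valid for $0<a<b$ and concentric pseudohyperbolic discs $\Delta\subset\Delta'$ and nonnegative subharmonic $u$; this is the standard consequence of the $L^a$ local maximum principle for subharmonic functions. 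The reverse H\"older step enters with $(a,b)=(1,p)$ when $p\ge1$ for the first inequality and with $(a,b)=(p,1)$ when $0<p<1$ for the second, the complementary cases being pure Jensen.

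Finally I would sum. Multiplying the first estimate by $\om^\star(z_j)^{-p}(1-|z_j|)^{-2}$ gives $S_1\lesssim\sum_j\om^\star(z_j)^{-p}(1-|z_j|)^{-2p}\big(\int_{R_j^\ast}u\big)^p$; expanding $R_j^\ast$ into its $\le N_0$ constituent rectangles, using subadditivity (for $p<1$) or the power--mean inequality (for $p\ge1$), and then interchanging the order of summation with the bounded overlap and $\om^\star(z_i)\asymp\om^\star(z_j)$ recovers $\lesssim S_2$. The second estimate yields $S_2\lesssim S_1$ in the same way, and hence $S_1\asymp S_2$, which completes the proof once combined with the first paragraph. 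I expect the main obstacle to be precisely the reverse H\"older inequality for subharmonic functions, whose role is to compensate for the fact that $\int_{R_j}u^p$ and $\big(\int_{R_j}u\big)^p$ are \emph{not} comparable rectangle by rectangle (as the example $u(z)=|z|^N$ already shows on a single disc); comparability is restored only after enlarging to $R_j^\ast$ and summing, so the overlap bookkeeping is the second delicate point to get right.
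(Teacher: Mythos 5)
Your proposal is correct and takes essentially the same route as the paper: in both arguments the nontrivial direction reduces to the Hardy--Littlewood $L^p$ sub-mean-value inequality for nonnegative subharmonic functions (your reverse H\"older step; the paper invokes it via \cite{HLCrelle32} and \cite[Lemma~3]{LuZhu92}, applied at the point where $u$ attains its maximum on each cell), the easy direction reduces to the pointwise sub-mean value property/Jensen, and both are assembled with bounded-overlap covering estimates together with the local constancy of $\om^\star$ and $1-|z|$ guaranteed by $\om\in\DD$. The only difference is organizational: the paper first converts the dyadic sum into an equivalent sum over a $\delta$-lattice and compares the integral with that lattice sum, whereas you stay on the dyadic grid and compare the two discretized sums through the enlarged blocks $R_j^\ast$ --- the same covering technology in different clothes.
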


\begin{proof} Let $\{a_j\}_{j=0}^\infty$ be a fixed
$\delta$-lattice. Since $\om\in\DD$, a straightforward calculation shows that
\eqref{37dis} is equivalent to
    \begin{equation}\label{37dis2}
    \sum_{j=1}^\infty\left(\frac{\int_{\Delta(a_j,r)}u(z)\,dA(z)}{\omega^\star(a_j)(1-|a_j|)^2}\right)^p<\infty
    \end{equation}
for any $5\delta\le r<1$. Therefore it suffices to show that
\eqref{37u} and \eqref{37dis2} are equivalent.

Assume first that \eqref{37dis2} is satisfied, and let $0<s<1$ be
fixed. Then, by Lemma~\ref{Lemma:replacement-Lemmas-Memoirs},
    \begin{equation*}
    \begin{split}
    \int_\D\left(\frac{u(z)}{\omega^\star(z)}\right)^p\frac{dA(z)}{(1-|z|)^2}
    &\lesssim
    \int_\D\left(\int_{\Delta(z,s)}\frac{u(\z)}{(1-|\z|)^2\omega^\star(\z)}\,dA(\z)\right)^p\frac{dA(z)}{(1-|z|)^2}\\
    &\lesssim\sum_{j=1}^\infty\int_{\Delta(a_j,5\delta)}
    \left(\int_{\Delta(z,s)}\frac{u(\z)}{(1-|\z|)^2\omega^\star(\z)}\,dA(\z)\right)^p
    \frac{dA(z)}{(1-|z|)^2}\\
    &\lesssim\sum_{j=1}^\infty
    \left(\int_{\Delta(a_j,r)}\frac{u(\z)}{(1-|\z|)^2\omega^\star(\z)}\,dA(\z)\right)^p
    \lesssim\sum_{j=1}^\infty \left(\frac{\int_{\Delta(a_j,r)}u(\z)\,dA(\z)}
    {(1-|a_j|)^2\omega^\star(a_j)} \right)^p,
    \end{split}
    \end{equation*}
where $r=r(\d,s)\in(5\d,1)$, and thus \eqref{37u} is satisfied.

Conversely, assume \eqref{37u}, and let $r\in[5\d,1)$ be given.
Further, let $\tilde{a}_j\in\overline{\Delta(a_j,r)}$ such that $\sup_{z\in\Delta(a_j,r)}u(z)=u(\tilde{a}_j)$,
and let $0<s<1$ be fixed. Then, by using Lemma~\ref{Lemma:replacement-Lemmas-Memoirs},
a classical result of
Hardy-Littlewood~\cite{HLCrelle32} (see also
\cite[Lemma~3]{LuZhu92}), and known properties of a
$\delta$-lattice, we deduce
    \begin{equation*}
    \begin{split}
    \sum_{j=1}^\infty \left(\frac{\int_{\Delta(a_j,r)}
    u(\z)\,dA(\z)}{(1-|a_j|)^2\omega^\star(a_j)}\right)^p
    &\lesssim \sum_{j=1}^\infty\left(\frac{u(\tilde{a_j})}{\omega^\star(a_j)}\right)^p\lesssim \sum_{j=1}^\infty \int_{\Delta(\tilde{a}_j,s)}\left(\frac{u(z)}{\omega^\star(z)}\right)^p\frac{dA(z)}{(1-|z|)^2}\\
    &\lesssim \sum_{j=1}^\infty \int_{\Delta(a_j,t)}\left(\frac{u(z)}{\omega^\star(z)}\right)^p\frac{dA(z)}{(1-|z|)^2}
    \lesssim
    \int_\D\left(\frac{u(z)}{\omega^\star(z)}\right)^p\frac{dA(z)}{(1-|z|)^2},
    \end{split}
    \end{equation*}
where $t=t(r,s)\in(5\d,1)$.
\end{proof}

\medskip\noindent\emph{Proof of} Theorem~\ref{Thm:intro-SchattenMain}. The proof
uses arguments similar to those in \cite{LuZhu92}. However, it is simpler because $N_{\varphi,\om^\star}$ is subharmonic in $\D\setminus\{\vp(0)\}$
by Lemma~\ref{Nsubharmonic}, and hence a weighted version of
\cite[Lemma~1, p.~1132]{LuZhu92} is not needed.

Every automorphism $\vp_a$ induces a bounded invertible composition operator $C_{\vp_a}$ on $A^2_\om$. So, if  $\vp$ is an analytic self-map of $\D$, then $C_\vp C_{\vp_{\vp(0)}}=C_\sigma$, where $\sigma=\vp_{\vp(0)}\circ\vp$ satisfies $\sigma(0)=0$. Thus the operator $C_\vp\in \SSS_p(A^2_\omega)$ if and only if $C_\sigma$ does. Therefore throughout the proof we may assume that $\vp$ fixes the origin.

The differentiation $D(f)=f'$ is a bounded invertible
operator from the orthogonal complement of constants in
$A^2_\omega$ to $A^2_{\omega^\star}$. A straightforward
calculation shows that $T=DC_\vp D^{-1}$ satisfies
    \begin{equation}\label{38}
   \langle T^\star T(f),g\rangle_{A^2_{\omega^\star}}= \langle T(f),T(g)\rangle_{A^2_{\omega^\star}}=\int_\D
    f(z)\overline{g(z)}N_{\vp,\omega^\star}(z)\,dA(z).
    \end{equation}

Denote $d\mu(z)=N_{\vp,\omega^\star}(z)\,dA(z)$.
By \eqref{38} and \eqref{eq:st12}, we have $T^\star T=\mathcal{T}_\mu$, and so
$C_{\vp}\in\SSS_p(A^2_\omega)$ if and only if
$\mathcal{T}_\mu\in\SSS_{p/2}(A^2_{\om^\star})$, which is in turn
equivalent to
    $$
    \sum_{R_j\in\Upsilon}\left(\frac{\int_{R_j}N_{\vp,\omega^\star}(z)\,dA(z)}{\omega^\star(z_j)(1-|z_j|)^2}\right)^\frac{p}{2}<\infty
    $$
by Theorem~\ref{th:tmuextended}. The assertion follows by Lemmas~\ref{Nsubharmonic} and
\ref{le:equiv}.

We also provide a different proof of the fact that $C_\vp\in\SSS_p(A^2_\omega)$ implies \eqref{37} when $0<p<2$.
Assume, without loss of generality, that $\vp$
is non-constant.
Let
    $$
    (T^\star T)(f)=\sum_{n}\lambda_n\langle f,e_n^\star\rangle_{A^2_{\omega^\star}}e_n^\star,\quad f\in A^2_{\om^\star},
    $$
be the canonical decomposition of $T^\star T$. Then
$\{e_n^\star\}$ is an orthonormal basis. Indeed, if there were a
unit vector $e^\star\in A^2_{\omega^\star}$ such that
$e^\star\perp e^\star_ n$ for all $n\geq 1$, then
    \begin{displaymath}
    \int_{\D}|e^\star(z)|^2 N_{\vp,\om^\star}(z) \,dA(z)
    \asymp\|T(e^\star)\|_{A^2_{\omega^\star}}^2=\langle T^\star T(e^\star),e^\star\rangle_{A^2_{\omega^\star}} =0
    \end{displaymath}
because $T^\star T$ is a linear combination of the vectors
$e^\star_n$. Therefore we would have $e^\star\equiv0$ which is
obviously a contradiction. Now, \cite[Theorem~1]{PelRatproj} and Lemma~\ref{le:sc1} together with \eqref{RKformula} and  H\"older's
inequality yield
    \begin{eqnarray*}
    \int_\D\left(\frac{N_{\vp,\omega^\star}(z)}{\omega^\star(z)}\right)^\frac{p}{2}\frac{dA(z)}{(1-|z|)^2}
    &\asymp&\int_\D\left(\frac{N_{\vp,\omega^\star}(z)}{\omega^\star(z)}\right)^\frac{p}{2}
    \|B_z^{\omega^\star}\|^2_{A^2_{\omega^\star}}\omega^\star(z)\,dA(z)\\
    &\asymp&\sum_n\int_\D\left(\frac{N_{\vp,\omega^\star}(z)}{\omega^\star(z)}\right)^\frac{p}{2}
    |e_n^\star(z)|^2\omega^\star(z)\,dA(z)\\
    &\le&\sum_n\left(\int_\D N_{\vp,\omega^\star}(z)|e_n^\star(z)|^2\,dA(z)\right)^\frac{p}2\\
    &\asymp&\sum_n\langle(T^\star
    T)(e_n^\star),e_n^\star\rangle_{A^2_{\omega^\star}}^\frac{p}{2}.
    \end{eqnarray*}
This shows that \eqref{37} is satisfied whenever
$C_\vp\in\SSS_p(A^2_\omega)$.
An analogous argument gives that  \eqref{37} implies $C_\vp\in\SSS_p(A^2_\omega)$, when $2<p<\infty$.
\hfill$\Box$

\subsection*{Acknowledgements}
We wish to thank Manuel D.~Contreras and Wayne Smith for several discussions on composition operators.

%%%%%%%%%%%%%%%%%%%%%%%%%%%%%%%%
%%%% ---- BIBLIOGRAPHY ---- %%%%
%%%%%%%%%%%%%%%%%%%%%%%%%%%%%%%%

\end{document}